\numberwithin{equation}{section}
\newcounter{smallarabics}
\newcounter{smallroman}
\newtheorem{theoreme}{theorem }[section]
\newtheorem{theorem}[theoreme]{Theorem}
\newtheorem{proposition}[theoreme]{Proposition}
\newtheorem{Lemma}[theoreme]{Lemma}
\newtheorem{remark}{Remark}[section]
\newtheorem{example}[theoreme]{Example}
\newcommand{\vr}{\varphi_r}
\newcommand{\vl}{\varphi_{\ell}}
\renewcommand{\i}{\mathrm{i}}
\newcommand{\tA}{\tilde{A}}
\newcommand\nn\nonumber
\renewcommand\leq\varleq
\renewcommand\geq\vargeq
\newcommand{\sign}{\mathrm{sign}}
\renewcommand{\star}{*}
\newcommand{\1}{\mathbb 1}
\newcommand{\T}{\tau}
 \newcommand{\R}{\mathbb{R}}
 \newcommand{\N}{\mathbb{N}}
\newcommand{\Z}{\mathbb{Z}} \newcommand{\C}{\mathbb{C}}
\newcommand{\D}{\mathcal{D}} 
 \renewcommand{\H}{\mathcal{H}}
\newcommand{\grad}{\nabla}
\DeclareMathOperator{\supp}{supp}
      \def\@setcopyright{}
      \def\serieslogo@{}
\begin{document}

\author{Mandich, Marc-Adrien}
   \address{Institut de Math\'ematiques de Bordeaux, 351 cours de la Lib\'{e}ration, F 33405 Talence, France}
   \email{marc-adrien.mandich@u-bordeaux.fr}

   % title

   \title[Sub-exponential decay for discrete Schr\"odinger operators]{Sub-exponential decay of eigenfunctions for some discrete Schr\"odinger operators}

   % abstract (optional)
\begin{abstract}
Following the method of Froese and Herbst, we show for a class of potentials $V$ that an eigenfunction $\psi$ with eigenvalue $E$ of the multi-dimensional discrete Schr\"odinger operator $H = \Delta + V$ on $\Z^d$ decays sub-exponentially whenever the Mourre estimate holds at $E$. In the one-dimensional case we further show that this eigenfunction decays exponentially with a rate at least of $\text{cosh}^{-1}((E-2)/(\theta_E-2))$, where $\theta_E$ is the nearest threshold of $H$ located between $E$ and $2$. A consequence of the latter result is the absence of eigenvalues between $2$ and the nearest thresholds above and below this value. The method of Combes-Thomas is also reviewed for the discrete Schr\"odinger operators.
\end{abstract}

   % AMS subject classifications (used in AMS journals)
%
%\renewcommand{\subjclassname}{\textup{2010} Mathematics Subject Classification} 
\subjclass[2010]{39A70, 81Q10, 47B25, 47A10, 35P99.}

   % AMS keywords (used in AMS journals)
   \keywords{discrete Schr\"{o}dinger operator, decay eigenfunction, embedded eigenvalue, absence eigenvalue, Mourre theory, Mourre estimate}

   %\date{\today}

   \maketitle

\section{Introduction}

The analysis of the decay rate of eigenfunctions of Schr\"odinger operators goes back to the famous works of Slaggie and Wichmann \cite{SW}, Agmon \cite{A1}, and Combes and Thomas \cite{CT}. Their results showed that eigenfunctions corresponding to eigenvalues located outside the essential spectrum decay exponentially. Subsequently, Froese and Herbst \cite{FH}, but also \cite{FHHO1} and \cite{FHHO2}, investigated the decay of eigenfunctions corresponding to eigenvalues located in the essential spectrum of Schr\"odinger operators. They showed that eigenfunctions of the continuous Schr\"odinger operator on $\R^n$ decay exponentially at non-threshold energies for a large class of potentials. Since their pioneering work a solid literature has grown using these ideas. For example, these ideas have been applied to Schr\"odinger operators on manifolds \cite{V}, Schr\"odinger operators in PDE's \cite{HS}, and self-adjoint operators in Mourre theory \cite{FMS}. This short list is by no means complete. The question however does not seem to have been investigated for the discrete Schr\"odinger operator on the lattice and constitutes the subject of this paper. For completeness and convenience, this paper will also review the Combes-Thomas method for the discrete Schr\"odinger operators. A nice historical review on the exponential decay of eigenfunctions is done by Hislop in \cite{Hi}.

We now describe the mathematical setup of the article. The configuration space is the multi-dimensional lattice $\Z^d$ for some integer $d\geqslant 1$. For a multi-index $n=(n_1,...,n_d) \in \Z^d$, we set $|n|^2 := n_1^2 + ...+n_d^2$. Consider the complex Hilbert space $\mathcal{H} := \ell^2(\Z^d)$ of square summable sequences $(u(n))_{n \in \Z^d}$. The discrete Schr\"odinger operator acting on $\mathcal{H}$ is 
\begin{equation}
H := \Delta + V,
\end{equation}
where $\Delta$ is the non-negative discrete Laplacian defined by 
\begin{equation*}
(\Delta u)(n) := \sum _{\substack{m \in \Z^d, \\ |n-m|=1}} (u(n)-u(m)), \quad \text{for all} \ n \in \Z^d, u \in \mathcal{H},
\end{equation*}
and $V$ is a multiplication operator by a bounded real-valued sequence $(V(n))_{n \in \Z^d}$. It is common knowledge that the spectrum of $\Delta$, denoted $\sigma(\Delta)$, is purely absolutely continuous and equals $[0,4d]$. Define for $(\alpha, \gamma) \in [0,\infty)\times [0,1]$ the operator of multiplication on $\mathcal{H}$ by
\begin{equation}
\label{Exponential}
\vartheta_{\alpha,\gamma} := \exp \left(\alpha \left(1+|n|^2\right)^{\gamma/2}\right), \quad  \text{with domain}
\end{equation}
\begin{equation*}
\mathcal{D}(\vartheta_{\alpha,\gamma}) := \left\{ u \in \mathcal{H} : \sum_{n \in \Z^d} \exp \left(2\alpha\left(1+|n|^2\right)^{\gamma/2}\right)|u(n)|^2 < +\infty \right\}.
\end{equation*}
In this manuscript, we will say that $\psi \in \mathcal{H}$ decays sub-exponentially (resp.\ exponentially)  if $\psi \in \mathcal{D}(\vartheta_{\alpha,\gamma})$ for some $\gamma < 1$ (resp.\ for $\gamma =1$) and some $\alpha > 0$. Write $\vartheta_{\alpha} := \vartheta_{\alpha,1}$. We begin with a well-known fact and formulate a version of the main result of Combes and Thomas in the context of multi-dimensional discrete Schr\"odinger operators:
\begin{theorem} 
\label{CTmethod}
Let $(V(n))_{n \in \Z^d}$ be a bounded sequence. Suppose that $H \psi = E \psi$, with $\psi \in \H$ and $E \in \R \setminus \sigma(\Delta) = (-\infty,0) \cup (4d, +\infty)$. If $\limsup_{|n| \to +\infty} |V(n)|  < \mathrm{dist}(\sigma(\Delta),E)$, then there exists $\nu >0$ depending on $\mathrm{dist}(\sigma(\Delta),E)$ such that for all $\alpha \in [0,\nu)$, $\psi \in \D(\vartheta_{\alpha})$. 
\end{theorem}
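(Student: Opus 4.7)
The plan is to adapt the classical Combes-Thomas argument: conjugate $H$ by a regularization of the weight $\vartheta_\alpha$, and exploit the fact that once $V$ is truncated to a large ball, $E$ lies in the resolvent set of the remainder, while the conjugation itself is a small bounded perturbation when $\alpha$ is small.

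Set $d_E := \mathrm{dist}(\sigma(\Delta), E) > 0$, fix $c' \in (\limsup_{|n|\to\infty}|V(n)|,\, d_E)$, and choose $R$ large enough that $|V(n)| \leq c'$ for $|n| > R$. Split $V = V_c + V_\infty$ with $V_c := V\, \mathbb{1}_{\{|n|\leq R\}}$ and set $H_\infty := \Delta + V_\infty$. Since $V_\infty$ is a multiplication operator of norm at most $c' < d_E$, standard perturbation theory yields $E \notin \sigma(H_\infty)$ and $\|(H_\infty - E)^{-1}\| \leq (d_E - c')^{-1}$. To keep everything bounded at finite stages I regularize the weight: set $g(n) := (1+|n|^2)^{1/2}$, $g_\epsilon(n) := g(n)/(1+\epsilon g(n))$ for $\epsilon > 0$, and $\vartheta_{\alpha,\epsilon} := e^{\alpha g_\epsilon}$. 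Then $\vartheta_{\alpha,\epsilon}$ is a bounded multiplication operator with bounded inverse, $g_\epsilon \nearrow g$ pointwise as $\epsilon \to 0^+$, and $|g_\epsilon(n) - g_\epsilon(m)| \leq |g(n) - g(m)| \leq 1$ for nearest neighbors $|n-m|=1$, uniformly in $\epsilon$.

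Since $V_\infty$ commutes with $\vartheta_{\alpha,\epsilon}$, only the Laplacian contributes to the conjugation:
\[
\bigl(\vartheta_{\alpha,\epsilon} H_\infty \vartheta_{\alpha,\epsilon}^{-1} - H_\infty\bigr) u(n) = -\sum_{|n-m|=1}\bigl(e^{\alpha(g_\epsilon(n)-g_\epsilon(m))}-1\bigr)\, u(m).
\]
Using $|e^{\alpha s}-1| \leq e^\alpha - 1$ for $|s|\leq 1$ and Schur's test (each row and column of the kernel has $2d$ nonzero entries), this operator has norm at most $2d(e^\alpha - 1)$, uniformly in $\epsilon$. Fix $\nu > 0$ with $2d(e^\nu - 1) < d_E - c'$; a Neumann-series argument then shows that for all $\alpha \in [0, \nu)$,
\[
\bigl\|\bigl(\vartheta_{\alpha,\epsilon} H_\infty \vartheta_{\alpha,\epsilon}^{-1} - E\bigr)^{-1}\bigr\| \leq \bigl(d_E - c' - 2d(e^\alpha - 1)\bigr)^{-1},
\]
uniformly in $\epsilon > 0$.

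To conclude, rewrite the eigenfunction equation as $(H_\infty - E)\psi = -V_c \psi$, apply the bounded operator $\vartheta_{\alpha,\epsilon}$, and use the exact conjugation identity to obtain
\[
\bigl(\vartheta_{\alpha,\epsilon} H_\infty \vartheta_{\alpha,\epsilon}^{-1} - E\bigr)(\vartheta_{\alpha,\epsilon}\psi) = -\vartheta_{\alpha,\epsilon} V_c \psi.
\]
Since $V_c\psi$ is supported in $\{|n|\leq R\}$, one has $\|\vartheta_{\alpha,\epsilon} V_c \psi\| \leq e^{\alpha(1+R^2)^{1/2}}\|V\|_\infty\|\psi\|$ uniformly in $\epsilon$. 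Inverting yields a uniform-in-$\epsilon$ bound on $\|\vartheta_{\alpha,\epsilon}\psi\|$, and monotone convergence as $\epsilon \to 0^+$ delivers $\psi \in \mathcal{D}(\vartheta_\alpha)$. The main technical step is arranging the regularization so that the Schur-type bound on the conjugated Laplacian is uniform in $\epsilon$; the choice $g_\epsilon = g/(1+\epsilon g)$ is made precisely so that $g_\epsilon$ inherits the nearest-neighbor Lipschitz constant of $g$, everything else being classical perturbation theory.
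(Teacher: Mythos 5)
Your proof is correct. It follows the same high-level Combes--Thomas strategy as the paper --- split $V = V_c + V_\infty$ with $V_c$ compactly supported and $\|V_\infty\| < \mathrm{dist}(\sigma(\Delta),E)$, use a Neumann series to show the conjugated $(H_\infty - E)$ remains invertible when the conjugation parameter $\alpha$ is small, and then invert the conjugated eigenvalue equation $(H_\infty-E)\psi = -V_c\psi$ --- but the technical implementation of the conjugation differs. The paper conjugates by $e^{\mathrm{i}\lambda\rho}$ with $\lambda\in\mathbb{C}$, proves that $\lambda\mapsto T(\lambda) := e^{\mathrm{i}\lambda\rho}Te^{-\mathrm{i}\lambda\rho}$ is an analytic family of bounded operators, and analytically continues the identity $e^{\mathrm{i}\lambda\rho}\psi = -(H_l(\lambda)-E)^{-1}e^{\mathrm{i}\lambda\rho}V_c\psi$ from real $\lambda$ to $\lambda = -\mathrm{i}\alpha$. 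You instead avoid analytic continuation entirely by replacing the unbounded weight $e^{\alpha g}$, $g(n)=\langle n\rangle$, with the bounded regularizations $e^{\alpha g_\epsilon}$, $g_\epsilon = g/(1+\epsilon g)$; the key observations are that $g_\epsilon$ inherits the nearest-neighbor Lipschitz bound of $g$ (so the conjugated Laplacian has operator norm $\leq 2d(e^\alpha-1)$ uniformly in $\epsilon$), and that $g_\epsilon\nearrow g$, so monotone convergence upgrades the uniform-in-$\epsilon$ bound on $\|e^{\alpha g_\epsilon}\psi\|$ to $\psi\in\mathcal{D}(\vartheta_\alpha)$. Your regularization route is more elementary and sidesteps the subtleties of analytic families (which the paper itself flags as requiring care in general), at the cost of introducing the auxiliary parameter $\epsilon$; the paper's route is the original Combes--Thomas presentation. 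A small aside: your bound $2d(e^\alpha-1)$, coming from $|e^{\alpha s}-1|\leq e^\alpha-1$ for $|s|\leq 1$, is marginally sharper than the paper's $2d\,\alpha e^\alpha$ from $|1-e^z|\leq|z|e^{|z|}$, though this is immaterial for the qualitative conclusion.
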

\begin{remark}
We recall that in the discrete setting, a multiplication operator $V$ is compact if and only if $\lim_{| n| \to + \infty} V(n) =0$. If $V$ is compact, then $0 = \limsup_{|n| \to +\infty} |V(n)|  < \mathrm{dist}(\sigma(\Delta),E)$ is automatically verified and also $\sigma_{\mathrm{ess}}(H) = \sigma_{\mathrm{ess}}(\Delta) = \sigma(\Delta)$ by Weyl's Theorem. So in this case, Theorem \ref{CTmethod} is indeed proving the exponential decay of the eigenfunction $\psi$ when the eigenvalue $E$ is located outside the essential spectrum of $H$. 
\end{remark}
The advantage of the perturbative method of Combes-Thomas is that it yields exponential decay of eigenfunctions with a convenient and explicit geometric bound under rather general assumptions for the potential. Another big plus is that it is easy to implement in many different scenarios. The drawback however is that it does not work if the eigenvalue $E$ belongs to the spectrum of the free operator $\Delta$. In addition to the aforementioned references, we refer to \cite{BCH} for an improved Combes-Thomas method with optimal exponential bounds. 

The method of Froese and Herbst does not exploit a condition like $\mathrm{dist}(\sigma(\Delta),E) >0$, but rather a Mourre estimate, which is a local positivity condition on the commutator between $H$ and some appropriate conjugate operator. The article is largely devoted to the study of this method. Before presenting the results, we elaborate on the Mourre estimate, the key relation in the theory developed by Mourre \cite{Mo}. We refer to \cite{ABG} and references therein for a thorough overview of the improved theory. The position operator $N=(N_1,...,N_d)$ is defined by 
\begin{equation}
(N_iu)(n) := n_i u(n), \quad \mathcal{D}(N_i) := \left\{ u\in \ell^2(\Z^d) : \sum_{n \in \Z^d} |n_i u(n)|^2 < +\infty \right\},
\end{equation}
and the shift operators $S_i$ and $S_i^*$ to the right and to left respectively act on $\mathcal{H}$ by
\begin{equation}
(S_i u)(n) := u(n_1,...,n_i-1,...,n_d), \quad \text{for all} \ n \in \Z^d \ \text{and} \ u \in \mathcal{H},
\end{equation}
and correspondingly for $S_i^*$. We note that the Laplacian may alternatively be written as $\Delta = \sum_{i=1}^d (2-S_i^*-S_i)$. The conjugate operator to $H$ that is used in this manuscript is the discrete version of the so-called generator of dilations. We denote it by $A$ and it is the closure of the operator $A_0$ given by
\begin{equation}
A_0 := \i\sum \limits_{i = 1}^d 2^{-1}(S_i^*+S_i) - (S_i^*-S_i)N_i = -\i\sum \limits_{i=1}^d 2^{-1}(S_i^*+S_i) + N_i(S_i^*-S_i)
\end{equation}
with domain $\mathcal{D}(A_0) = \ell_0(\Z^d)$, the collection of sequences with compact support. It is well-known that $A$ is a self-adjoint operator, see e.g.\ \cite{GGo}. Let $T$ be an arbitrary bounded self-adjoint operator on $\mathcal{H}$. If the form 
\begin{equation*}
(u,v) \mapsto \langle u, [T,A] v \rangle := \langle Tu, Av \rangle - \langle Au, Tv \rangle 
\end{equation*}
defined on $\mathcal{D}(A) \times \mathcal{D}(A)$ extends to a bounded form on $\mathcal{H} \times \mathcal{H}$, we denote by $[T,A]_{\circ}$ the bounded operator extending the form, and say that $T$ is of class $C^1(A)$, cf.\ \cite{ABG}[Lemma 6.2.9]. We refer the reader to  \cite{ABG}[Theorem 6.2.10] for equivalent definitions of this class. We have that 
\begin{equation}
\label{CommuteBaby}
[\Delta,\i A]_{\circ} = \sum_{i=1}^d \Delta_i(4-\Delta_i) = \sum_{i=1}^d (2-(S_i^*)^2 - (S_i)^2)
\end{equation} 
and this is a non-negative operator. We must also discuss the commutator between the potential $V$ and $A$. To this end, denote by $\T_iV$ and $\T_i^*V$ the operators of multiplication by the shifted sequence $(V(n))_{n \in \Z^d}$ to the right and left respectively on the i$^{\text{th}}$ coordinate, namely
\begin{equation*}
[(\T_iV)u](n) := V(n_1,...,n_i-1,...,n_d)u(n), \quad \forall n \in \Z^d, u \in \mathcal{H}, \text{and} \ i=1,...,d,
\end{equation*}
and correspondingly for $\T_i^*V$. The commutator between $V$ and $A$ is given by
\begin{equation}
\label{commutatorV}
\langle u, [V,\i A] v \rangle = \sum_{i=1}^d \langle u, [(2^{-1}-N_i)(V-\T_iV)S_i + (2^{-1}+N_i)(V-\T_i^*V)S_i^*]v\rangle, \quad  \forall u,v \in \ell_0(\Z^d).
\end{equation}
Assuming $V$ to be bounded, note that $[V,\i A]_{\circ}$ exists if and only if Hypothesis 1 stated below holds. Assuming $[H,\i A]_{\circ}$ to exist, we say that the Mourre estimate holds at $\lambda \in \R$ if there exists an open interval $\Sigma$ containing $\lambda$, a constant $c >0$ and a compact operator $K$ such that 
\begin{equation}
\label{Mourre}
E_{\Sigma}(H)[H,\i A]_{\circ} E_{\Sigma}(H) \geqslant c E_{\Sigma}(H) +K,
\end{equation}
in the form sense on $\mathcal{H} \times \mathcal{H}$. Here $E_{\Sigma}(H)$ is the spectral projector of $H$ onto the interval $\Sigma$. Denote $\Theta(H)$ the set of points where a Mourre estimate \eqref{Mourre} holds for $H$ with respect to $A$. In other words, $\R \setminus \Theta(H)$ is the set of \emph{thresholds} of $H$. 
In addition to $V$ bounded, two hypotheses on the potential appear in this manuscript: \\
\textbf{Hypothesis 1}: The potential $V$ satisfies 
\begin{equation}
\label{Hyp1}
\max_{1 \leqslant i \leqslant d} \sup_{n \in \Z^d} \ |n_i (V-\T_iV)(n))| < +\infty.  
\end{equation}
\textbf{Hypothesis 2}: $V$ is compact, i.e. 
\begin{equation}
\label{Hyp2}
V(n) \to 0, \quad \text{as} \ |n| \to +\infty.
\end{equation}
The main result of the paper concerning the one-dimensional operator $H$ is: 
\begin{theorem} 
\label{HunGun1}
Assume Hypotheses 1 and 2, and $d=1$. If $H\psi = E \psi$ with $\psi \in \ell^2(\Z)$, then if
\begin{equation}
\theta_E := \begin{cases}
\sup \left\{ 2 + (E-2)/\cosh{\alpha}  : \alpha \geqslant 0 \ \text{and} \ \psi \in \mathcal{D}(\vartheta_{\alpha}) \right \}, &\text{for} \ \ E < 2\\
\inf \left\{2 + (E-2)/\cosh{\alpha}  : \alpha \geqslant 0 \ \text{and} \ \psi \in \mathcal{D}(\vartheta_{\alpha}) \right\}, &\text{for} \ \ E > 2,
\end{cases} 
\end{equation}
one has that either $\theta_E \in \R \setminus \Theta(H)$ or $\theta_E = 2$. If $E = 2$, the statement is that either $ \psi \in \mathcal{D}(\vartheta_{\alpha})$ for all $\alpha \geqslant 0$ or $2 \in \R \setminus \Theta(H)$. Moreover, if $\psi \in \mathcal{D}(\vartheta_{\alpha})$ for all $\alpha \geqslant 0$, then $\psi = 0$. 
\end{theorem}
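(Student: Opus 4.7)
Let $\alpha_{\max} := \sup\{\alpha \geq 0 : \psi \in \mathcal{D}(\vartheta_\alpha)\} \in [0,+\infty]$. For $E \neq 2$, monotonicity of $\alpha \mapsto 2 + (E-2)/\cosh\alpha$ shows that $\theta_E = 2 + (E-2)/\cosh(\alpha_{\max})$ when $\alpha_{\max} < +\infty$, and $\theta_E = 2$ when $\alpha_{\max} = +\infty$. The formula for $\theta_E$ is motivated by the \emph{energy at infinity} of the conjugated problem: conjugation of the 1D Laplacian by the asymptotic weight $e^{\alpha n}$ gives $\vartheta_\alpha \Delta \vartheta_\alpha^{-1} \sim 2 - e^\alpha S - e^{-\alpha} S^*$ at large positive $n$, whose self-adjoint part $2 - \cosh(\alpha)(S + S^*) = 2(1-\cosh\alpha) + \cosh(\alpha)\Delta$ has spectrum equal to the $\cosh(\alpha)$-dilation of $[0,4]$ about the point $2$. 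A spectral value $E$ of the conjugated operator therefore corresponds, under the inverse dilation, to the $\Delta$-value $\theta(\alpha) := 2 + (E-2)/\cosh\alpha$.

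The main argument is a Froese--Herbst-type contradiction: assume $\alpha_{\max} < +\infty$, $\theta_E \neq 2$, and $\theta_E \in \Theta(H)$. For $0 \leq \alpha < \alpha_{\max}$, set $\psi_\alpha := \vartheta_\alpha \psi$ and $\varphi_\alpha := \psi_\alpha/\|\psi_\alpha\|$, rewriting $H\psi = E\psi$ as the pseudo-eigenvalue equation $H_\alpha \psi_\alpha = E \psi_\alpha$ for the non-self-adjoint $H_\alpha := \vartheta_\alpha H \vartheta_\alpha^{-1}$. Hypothesis~1 controls the shifted-potential contributions appearing in \eqref{commutatorV}, while Hypothesis~2 makes the difference between $H_\alpha$ and its asymptotic operator $\cosh(\alpha)\Delta + 2(1-\cosh\alpha) + V + \sinh(\alpha)(S^*-S)$ compact. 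Extracting a weakly convergent subsequence $\varphi_{\alpha_n} \rightharpoonup \varphi_*$ as $\alpha_n \uparrow \alpha_{\max}$ and passing to the limit in the real part of the pseudo-eigenvalue equation, compactness of $V$ yields that $\varphi_*$ is spectrally localized at $\theta_E$ for $\Delta$, hence for $H$. The virial identity applied to the conjugated problem, unfolded via \eqref{CommuteBaby} and \eqref{commutatorV}, forces $\langle \varphi_*, [H, iA]_\circ \varphi_* \rangle = 0$ in the limit. But the Mourre estimate \eqref{Mourre} at $\theta_E$ supplies the opposing strict bound $\langle \varphi_*, [H, iA]_\circ \varphi_*\rangle \geq c\|\varphi_*\|^2$, a contradiction provided $\varphi_* \neq 0$. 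Hence $\theta_E \in \R \setminus \Theta(H)$ or $\theta_E = 2$. The case $E = 2$ is handled identically with $\theta_E \equiv 2$ throughout.

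For the final assertion, assume $\psi \in \mathcal{D}(\vartheta_\alpha)$ for all $\alpha \geq 0$, so $\psi$ decays faster than any exponential. Pick any second solution $\phi$ of the three-term recurrence $\phi(n+1) = (2+V(n)-E)\phi(n) - \phi(n-1)$ linearly independent from $\psi$; since $V$ is bounded, the transfer matrices have uniformly bounded operator norm, so $\phi$ grows at most exponentially. The Wronskian $W_n := \psi(n)\phi(n+1) - \psi(n+1)\phi(n)$ is constant in $n$ and tends to zero as $n \to \pm\infty$, because super-exponential decay of $\psi$ dominates exponential growth of $\phi$; hence $W_n \equiv 0$, forcing $\psi$ and $\phi$ to be linearly dependent, a contradiction. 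Thus $\psi \equiv 0$. The principal technical obstacle in the main contradiction argument is verifying $\varphi_* \neq 0$: a weakly convergent sequence of unit vectors may have zero weak limit, and excluding this requires combining the spectral localization of $\varphi_{\alpha_n}$ with the local compactness properties of $H$ provided by Hypothesis~2, so that relevant spectral projectors behave as compact perturbations of those of the free Laplacian.
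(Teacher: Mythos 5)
Your contradiction argument for the main assertion hinges on extracting a \emph{nonzero} weak limit $\varphi_*$ of the normalized conjugated vectors, and this step cannot be carried out: the hypothesis $\alpha_{\max}<\infty$ forces $\|\psi_{\alpha_n}\|\to\infty$ as $\alpha_n\uparrow\alpha_{\max}$, so for any bounded set $B$ the mass of $\varphi_{\alpha_n}$ on $B$ tends to zero and $\varphi_{\alpha_n}\rightharpoonup 0$ necessarily. You correctly flag this difficulty at the end, but the proposed resolution (local compactness, spectral projectors as compact perturbations) cannot produce a nonzero weak limit against diverging norms. More importantly, the weak limit being zero is not an obstacle the Froese--Herbst method must overcome---it is the engine of the method. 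Because $\Psi_s\rightharpoonup 0$, the compact error $K$ in the Mourre estimate \eqref{Mourre} drops out, and energy localization via the conjugated eigenvalue equation (Lemma \ref{Goodlemma}) keeps $\|E_{\Sigma_1}(H)\Psi_s\|\to 1$, giving $\liminf_s\langle\Psi_s,E_{\Sigma_1}(H)[H,A']_\circ E_{\Sigma_1}(H)\Psi_s\rangle\geqslant\eta(1-c\epsilon_\gamma^2)$. Meanwhile the conjugated virial identity (Proposition \ref{propdodo}, whose derivation is the content of the Appendix) with the interpolating weights $F_s(x)=\alpha_1 x+\gamma\Upsilon_s(x)$, $\alpha_1<\alpha_{\max}<\alpha_1+\gamma$, yields $\limsup_s\langle\Psi_s,[H,A']_\circ\Psi_s\rangle\leqslant c\epsilon_\gamma$ with $\epsilon_\gamma\to 0$ as $\gamma\to 0$. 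The contradiction lies between these two limits; it is never the fixed-vector contest $0=\langle\varphi_*,[H,\i A]_\circ\varphi_*\rangle\geqslant c\|\varphi_*\|^2$ you propose, and even for a hypothetical nonzero $\varphi_*$ the Mourre estimate with its compact remainder and spectral cutoffs does not deliver your step-four bound for a single vector. You also cannot simply take $\alpha_n\uparrow\alpha_{\max}$: the one-parameter families $F_s$ are needed so that the considerable, discrete-specific error terms in the conjugated virial identity close uniformly, and the two scales $s\downarrow 0$ and $\gamma\to 0$ must be sent to their limits in that order.

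Your Wronskian argument for the final assertion ($\psi\in\mathcal{D}(\vartheta_\alpha)$ for all $\alpha$ implies $\psi=0$) is correct and genuinely simpler than the paper's route, which computes $\langle\Psi_\alpha,[H,A']_\circ\Psi_\alpha\rangle$ exactly for $F_\alpha(n)=\alpha|n|$, shows $\lim_{\alpha\to+\infty}\|\sqrt{\Delta(4-\Delta)}\Psi_\alpha\|^2=2$, and combines this with boundedness of $[V,\i A]_\circ$ and weak convergence to zero. Uniform boundedness of the transfer matrices (since $V$ is bounded), constancy of the discrete Wronskian, and super-exponential decay of $\psi$ give $\psi\equiv 0$ directly. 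This is a real simplification for $d=1$, though it exploits the two-dimensional solution space of the one-dimensional recurrence and has no analogue for $d\geqslant 2$, whereas the paper's method is tailored to generalize.
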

\begin{remark}
\label{dldl}
The function $\R^+ \ni \alpha \mapsto \theta_E(\alpha) := 2 + (E-2)/\cosh(\alpha) \in [E,2)$ is increasing to two when $E < 2$ so that $E \leqslant \theta_E \leqslant 2$, whereas the function is decreasing to two when $E>2$ in which case $E \geqslant \theta_E \geqslant 2$. This function is graphed in Figure \ref{Graph} for four different values of $E$.
\end{remark}

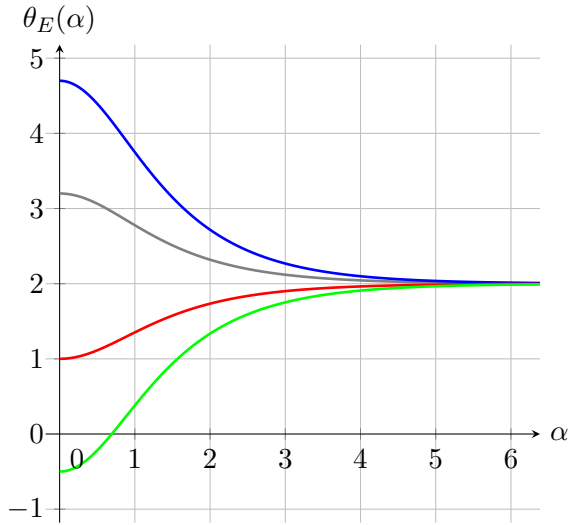
\begin{figure} [h]
\begin{tikzpicture}[domain=0.01:6]
\begin{axis}
[grid = major, 
clip = true, 
clip mode=individual, 
axis x line = middle, 
axis y line = middle, 
xlabel={$\alpha$}, 
xlabel style={at=(current axis.right of origin), anchor=west}, 
ylabel={$\theta_E(\alpha)$}, 
ylabel style={at=(current axis.above origin), anchor=south}, 
domain = 0.01:6, 
xmin = 0, 
xmax = 6.2, 
y=1cm,
x=1cm,
xtick={0,...,6},
ytick={-1,0,1,2,3,4,5},
enlarge y limits={rel=0.03}, 
enlarge x limits={rel=0.03}, 
ymin = -1, 
ymax = 5, 
after end axis/.code={\path (axis cs:0,0) node [anchor=north west,yshift=-0.075cm] {0} node [anchor=east,xshift=-0.075cm] {0};}]

\addplot[color=gray,line width = 1.0 pt, samples=400,domain=0:10] ({x},{2+(3.2-2)/cosh(x)});
\addplot[color=blue,line width = 1.0 pt, samples=400,domain=0:10] ({x},{2+(4.7-2)/cosh(x)});
\addplot[color=red,line width = 1.0 pt, samples=400,domain=0:10] ({x},{2+(1-2)/cosh(x)});
\addplot[color=green,line width = 1.0 pt, samples=400,domain=0:10] ({x},{2+(-0.5-2)/cosh(x)});
%\node[label={90:{$E_-$}},circle,fill,inner sep=1pt] at (axis cs:0.267949,0) {};
%\addplot[color=Firebrick2, samples=100, smooth, ultra thick] {x^2}
%node(endofplotsquare){} ;
%\node [right, color=Firebrick2] at (endofplotsquare) {$x^2$};
\end{axis}
\end{tikzpicture}
\caption{Graph of $\theta_E(\alpha) = 2 + (E-2)/\cosh(\alpha)$ for four different values of $E$.}
\label{Graph}
\end{figure}

If $E$ is both an eigenvalue and a threshold, Theorem \ref{HunGun1} does not give any information about the rate of decay of the corresponding eigenfunction, whereas if $E$ is not a threshold, the corresponding eigenfunction decays at a rate at least of $\cosh^{-1}((E-2)/(\theta_E-2))$. As in the continuous operator setting, the possibility of $\psi \in \mathcal{D}(\vartheta_{\alpha})$ for all $\alpha \geqslant 0$ can be eliminated. The last part of Theorem \ref{HunGun1} implies the absence of eigenvalues in the middle of the band $[0,4]$, more precisely between $2$ and the nearest thresholds above and below this value. 

The study of the absence of positive eigenvalues for Schr\"odinger operators has a long history. For continuous Schr\"odinger operators, it was shown in the sixties in articles by Kato \cite{K2}, Simon \cite{Si1} and Agmon \cite{A2} that the multi-dimensional operator $-\Delta+V_1 + V_2$ has no eigenvalues in $[0,+\infty)$ whenever $\lim _{|x| \to +\infty} |x| |V_1(x)|=0$ and $\lim _{|x| \to +\infty} |(x\cdot \grad) V_2(x)| =0$. In fact, the method of Froese and Herbst allows to extend this result to $N$-body Hamiltonians, see \cite[Theorem 4.19]{CFKS}. So, if the discrete case were to resemble the continuous case, it is not unreasonable to expect the multi-dimensional operator $\Delta + V$ to have no eigenvalues in $(0,4d)$ whenever $|n_i(V(n) - \tau_i V(n))| \to 0$ as $|n| \to + \infty$. A one-dimensional result pointing in this direction is the following. It actually comes as a corollary of Theorem \ref{HunGun1}.

\begin{theorem}
\label{CorollaryHun}
Let $d=1$. Suppose that $V$ satisfies $\lim_{|n| \to +\infty} |n| |V(n) - V(n-1)| = 0$ and $\lim _{|n| \to +\infty} |V(n)| = 0$. Then $H := \Delta + V$ has no eigenvalues in $(0,4)$.
\end{theorem}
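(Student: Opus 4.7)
The plan is to combine Theorem \ref{HunGun1} with the observation that, under the stronger hypotheses here, every point of $(0,4)$ lies in $\Theta(H)$; then the dichotomy in Theorem \ref{HunGun1} forces any putative eigenfunction at an eigenvalue in $(0,4)$ to be zero. First, $|n(V(n)-V(n-1))|\to 0$ implies the boundedness required by Hypothesis 1, and $|V(n)|\to 0$ is exactly Hypothesis 2, so Theorem \ref{HunGun1} applies.

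The key step is to show $(0,4)\subset\Theta(H)$. In dimension one \eqref{CommuteBaby} reduces to $[\Delta,\i A]_\circ=g(\Delta)$ with $g(x):=x(4-x)$, a polynomial. Since $V=H-\Delta$ is compact and $H,\Delta$ are bounded, a direct expansion of $g(H)-g(\Delta)$ in terms of $V$ and $\Delta$ shows this difference is compact. Inspecting \eqref{commutatorV} and using $|n(V(n)-V(n-1))|\to 0$, the multiplication operators $(2^{-1}-N)(V-\T V)$ and $(2^{-1}+N)(V-\T^*V)$ are compact (their defining sequences vanish at infinity), hence $[V,\i A]_\circ$ is compact. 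For any $E\in(0,4)$, pick an open interval $\Sigma\ni E$ with $\overline{\Sigma}\subset(0,4)$ and set $c_\Sigma:=\min_{x\in\overline{\Sigma}}g(x)>0$. Functional calculus gives $E_\Sigma(H)g(H)E_\Sigma(H)\geq c_\Sigma E_\Sigma(H)$, and combining the compact corrections yields
\begin{equation*}
E_\Sigma(H)[H,\i A]_\circ E_\Sigma(H) \geq c_\Sigma E_\Sigma(H)+K
\end{equation*}
with $K$ compact; hence $E\in\Theta(H)$.

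To conclude, suppose for contradiction that $H\psi=E\psi$ with $\psi\in\ell^2(\Z)\setminus\{0\}$ and $E\in(0,4)$. If $E\neq 2$, then $\theta_E$ lies in the closed interval with endpoints $E$ and $2$, hence in $(0,4)$; since $(0,4)\cap(\R\setminus\Theta(H))=\emptyset$, the dichotomy in Theorem \ref{HunGun1} forces $\theta_E=2$, which means $\psi\in\D(\vartheta_\alpha)$ for every $\alpha\geq 0$, and the last clause of Theorem \ref{HunGun1} then gives $\psi=0$, a contradiction. The case $E=2$ is identical: since $2\in\Theta(H)$, the $E=2$ alternative in Theorem \ref{HunGun1} directly gives $\psi\in\D(\vartheta_\alpha)$ for all $\alpha\geq 0$, and hence $\psi=0$.

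The main obstacle I anticipate is the careful verification that $g(H)-g(\Delta)$ and $[V,\i A]_\circ$ are compact and that $H$ is of class $C^1(A)$; these are routine, but require the standard Mourre-theory bookkeeping with the unbounded conjugate operator $A$.
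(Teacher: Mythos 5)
Your proof is correct, and it takes a genuinely different route to establishing $(0,4)\subset\Theta(H)$ than the paper does. The paper takes the abstract Mourre-theory path: it observes from \eqref{commutatorV} that $[V,\i A]_\circ$ is compact, cites a proposition of Gol\'enia--Mandich to conclude $V\in C^1_{\rm u}(A)$, notes that $\Delta\in C^1_{\rm u}(A)$ as well, and then invokes \cite[Theorem 7.2.9]{ABG} to deduce $\Theta(H)=\Theta(\Delta)=(0,4)$. You instead build the Mourre estimate by hand at each $E\in(0,4)$: you write $[H,\i A]_\circ=g(\Delta)+[V,\i A]_\circ$ with $g(x)=x(4-x)$, note that $g(H)-g(\Delta)=4V-VH-\Delta V$ and $[V,\i A]_\circ$ are compact (the latter because the coefficient sequences $(\tfrac12\mp n)(V-\T^{(*)}V)(n)$ vanish at infinity), and then read off the estimate from $E_\Sigma(H)g(H)E_\Sigma(H)\geq c_\Sigma E_\Sigma(H)$ by functional calculus. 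Your approach is more self-contained --- it avoids appealing to the $C^1_{\rm u}$ machinery and the ABG perturbation theorem --- at the cost of re-proving, in this special polynomial case, what those results give in general. The final deduction from Theorem \ref{HunGun1} is the same in both: $\theta_E$ lies between $E$ and $2$, hence in $(0,4)$, hence cannot be a threshold, so $\theta_E=2$ forces $\psi\in\D(\vartheta_\alpha)$ for all $\alpha$, and the last clause of Theorem \ref{HunGun1} yields $\psi=0$. Your handling of the $E=2$ alternative is also correct. The one place to be slightly more careful is your worry about ``bookkeeping with the unbounded conjugate operator $A$'': in fact this is not an issue here since $[H,\i A]_\circ$ is a concrete bounded operator given by explicit multiplication and shift operators, so no delicate domain analysis is needed.
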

\begin{proof}
First, if $|n(V(n)-V(n-1))| \to 0$, we see from \eqref{commutatorV} that $[V,\i A]_{\circ}$ is not only a bounded operator but also compact. It follows by \cite[Proposition 2.1]{GMa} that $V \in C^1_{\text{u}}(A)$. Let $\mathcal{B}(\mathcal{H})$ denote the bounded operators on $\mathcal{H}$. We recall that a bounded operator $T$ belongs to the $C^1_{\text{u}}(A)$ class if the map $\R \mapsto e^{-\i tA} T e^{\i tA}$ is of class $C^1(\R; \mathcal{B}(\mathcal{H}))$, with $\mathcal{B}(\mathcal{H})$ endowed with the norm operator topology. It is well-known that $\Delta$ is of class $C^1_{\text{u}}(A)$, see e.g.\ \cite{Man}. We then apply \cite[Theorem 7.2.9]{ABG} to conclude that $\Theta(H)=\Theta(\Delta)= (0,4)$. Here $\Theta(\Delta)$ denotes the set of points where a Mourre estimate holds for $\Delta$ with respect to $A$, and $\Theta(\Delta)= (0,4)$ is a direct consequence of\eqref{CommuteBaby}. Since $H$ does not have any thresholds in $(0,4)$, it must be that $H$ has no eigenvalues in this interval, by Theorem \ref{HunGun1}. 
\qed
\end{proof}

This is very much related to Remling's optimal result \cite{R}, that if $\lim_{|n| \to +\infty} |n| |V(n)| = 0$, then the spectrum of the one-dimensional discrete operator $\Delta+V$ is purely absolutely continuous on $(0,4)$. Of course, Remling's result is stronger than that of Theorem \ref{CorollaryHun}, but the assumptions are also stronger. Also related is a one-dimensional discrete version of Weidmann's Theorem proven in \cite{Si2}, namely if $V$ is compact and of bounded variation, then the spectrum of $\Delta +V$ is purely absolutely continuous on $(0,4)$. Finally, another interesting result is that of \cite{JS} where it is shown that the spectrum of the half-line discrete Schr\"odinger operator $\Delta +W+V$ is purely absolutely continuous on $(0,4) \setminus \{2\pm 2\cos(k/2)\}$, where $W(n)=q \sin(kn)/n^{\beta}$ with $q,k \in \R, \beta \in (1/2,1]$ and $(V(n)) \in \ell^1(\Z_+)$. Note that Theorem \ref{HunGun1} is in conformity with their example when $\beta=1$ and $V\equiv 0$. In the same spirit, we provide a simple application of Theorem \ref{CorollaryHun}:
\begin{proposition} 
\label{WVNJM}
Let $d=1$ and $W(n) := q \sin (k |n|^{\alpha}) / |n| ^{\beta}$ be a Wigner-von Neumann potential, with $q,k \in \R$. Then for $\beta > \alpha >0$, $\sigma_{\rm{ess}}(\Delta + W) = [0,4]$ and $(0,4)$ is void of eigenvalues.
\end{proposition}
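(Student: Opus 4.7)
The plan is to verify the hypotheses of Theorem~\ref{CorollaryHun} for $V=W$, from which the absence of eigenvalues in $(0,4)$ is immediate, and to handle the essential spectrum claim separately by Weyl's theorem. One may first dispose of the degenerate cases $q=0$ or $k=0$, in which $W\equiv 0$ and everything follows from $\sigma(\Delta)=[0,4]$; assume henceforth $qk\neq 0$ (and redefine $W(0)$ arbitrarily, e.g.\ $W(0):=0$, since a single entry does not affect any asymptotic condition).

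Since $\beta>0$, the crude bound $|W(n)|\leqslant |q|/|n|^{\beta}$ shows that $W$ is a compact multiplication operator. Weyl's theorem then gives $\sigma_{\mathrm{ess}}(\Delta+W)=\sigma_{\mathrm{ess}}(\Delta)=[0,4]$, and at the same time supplies the second hypothesis $\lim_{|n|\to\infty}|W(n)|=0$ of Theorem~\ref{CorollaryHun}.

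The substantive step is to check that $|n|\,|W(n)-W(n-1)|\to 0$ as $|n|\to\infty$. I would split the difference into two natural pieces by freezing in turn the sine and the reciprocal power:
\begin{equation*}
W(n)-W(n-1) \;=\; q\sin(k|n|^{\alpha})\bigl[|n|^{-\beta}-|n-1|^{-\beta}\bigr] \;+\; q\bigl[\sin(k|n|^{\alpha})-\sin(k|n-1|^{\alpha})\bigr]|n-1|^{-\beta}.
\end{equation*}
For the first piece, $|\sin(k|n|^{\alpha})|\leqslant 1$ and the mean value theorem applied to $x\mapsto x^{-\beta}$ gives $\bigl||n|^{-\beta}-|n-1|^{-\beta}\bigr|=O(|n|^{-\beta-1})$, contributing $O(|n|^{-\beta})$ after multiplication by $|n|$. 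For the second piece, the mean value theorem applied to $x\mapsto \sin(kx^{\alpha})$ produces $\bigl|\sin(k|n|^{\alpha})-\sin(k|n-1|^{\alpha})\bigr|\leqslant |k|\alpha\,\xi^{\alpha-1}=O(|n|^{\alpha-1})$ for some $\xi\in[|n-1|,|n|]$; dividing by $|n-1|^{\beta}$ and multiplying by $|n|$ yields $O(|n|^{\alpha-\beta})$. Both bounds vanish as $|n|\to\infty$ precisely because $\beta>\alpha>0$, so Theorem~\ref{CorollaryHun} applies and rules out eigenvalues in $(0,4)$.

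I expect no serious obstacle: the proposition is essentially a direct corollary of Theorem~\ref{CorollaryHun}, and the only calculation is the discrete-derivative estimate above. The delicate bookkeeping point is simply to confirm that both exponent regimes $-\beta$ and $\alpha-\beta$ beat the extra factor of $|n|$ coming from the Mourre weight, which is exactly what the assumption $\beta>\alpha>0$ is designed to ensure.
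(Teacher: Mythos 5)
Your proposal is correct and follows exactly the route the paper intends: the paper presents Proposition~\ref{WVNJM} as ``a simple application of Theorem \ref{CorollaryHun}'' without spelling out the discrete-derivative computation, and your decomposition of $W(n)-W(n-1)$ into the two mean-value-theorem pieces, producing the bounds $O(|n|^{-\beta})$ and $O(|n|^{\alpha-\beta})$ after multiplication by $|n|$, is precisely the calculation needed to verify its hypothesis $\lim_{|n|\to\infty}|n|\,|W(n)-W(n-1)|=0$. The remarks on compactness of $W$, Weyl's theorem for the essential spectrum, and the harmless redefinition of $W(0)$ are all appropriate and complete the argument.
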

An analogous result for continuous Schr\"odinger operators is obtained and thoroughly discussed in \cite{JM}, and is also inspired from \cite{FH}. We now turn to the multi-dimensional discrete Schr\"odinger operators. The main result concerning these is:
\begin{theorem}
\label{sunnyHunny}
Let $d \geqslant 1$. Suppose that Hypothesis 1 holds for the potential $V$. If $H\psi = E\psi$ with $\psi \in \ell^2(\Z^d)$ and $E \in \Theta(H)$, then $\psi \in \mathcal{D}(\vartheta_{\alpha,\gamma})$ for all $(\alpha,\gamma) \in [0,\infty)\times[0,2/3)$. 
\end{theorem}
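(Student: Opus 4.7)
The plan is to adapt the Froese--Herbst method \cite{FH}, in the abstract Mourre-theoretic form developed in \cite{FMS}, to the discrete Laplacian on $\Z^d$. For each $\gamma \in [0,1)$ set
\[
\alpha_\star(\gamma) \;:=\; \sup\{\alpha \geqslant 0 : \psi \in \mathcal{D}(\vartheta_{\alpha,\gamma})\}.
\]
The goal is to show $\alpha_\star(\gamma) = +\infty$ for every $\gamma \in [0,2/3)$, which I would argue by contradiction: assume $\alpha_\star(\gamma_0) < +\infty$ for some $\gamma_0 \in (0, 2/3)$.

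First I would regularize the weight so that conjugation of $H$ becomes a bounded operation. Introduce
\[
F_{\alpha,\epsilon}(n) \;:=\; \frac{\alpha (1+|n|^2)^{\gamma_0/2}}{1+\epsilon (1+|n|^2)^{\gamma_0/2}}, \qquad \epsilon > 0,
\]
which is bounded and increases pointwise to $\alpha(1+|n|^2)^{\gamma_0/2}$ as $\epsilon \downarrow 0$. For $\alpha < \alpha_\star(\gamma_0)$, the unit vector $\phi_{\alpha,\epsilon} := e^{F_{\alpha,\epsilon}}\psi / \|e^{F_{\alpha,\epsilon}}\psi\|$ is well defined and solves the conjugated equation $H_{\alpha,\epsilon}\phi_{\alpha,\epsilon} = E\phi_{\alpha,\epsilon}$ with $H_{\alpha,\epsilon} := e^{F_{\alpha,\epsilon}}He^{-F_{\alpha,\epsilon}}$. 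The identity $e^{F} S_i e^{-F} = e^{F - \T_i F} S_i$ (and its adjoint), combined with the Taylor expansion $F_{\alpha,\epsilon} - \T_i F_{\alpha,\epsilon} = O(|n|^{\gamma_0-1})$ valid uniformly in $\epsilon$, lets me split
\[
H_{\alpha,\epsilon} \;=\; H + \i B_{\alpha,\epsilon} + R_{\alpha,\epsilon},
\]
where $\i B_{\alpha,\epsilon}$ is anti-self-adjoint of first order in $F_{\alpha,\epsilon}-\T_i F_{\alpha,\epsilon}$ and $R_{\alpha,\epsilon}$ is self-adjoint of higher order.

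Next, combining the conjugated eigenvalue equation with the commutator against $\i A$ (and using Hypothesis 1 to handle $[V,\i A]_\circ$ through \eqref{commutatorV}), I would derive a virial-type identity of the form
\[
\langle \phi_{\alpha,\epsilon}, [H, \i A]_\circ \phi_{\alpha,\epsilon} \rangle \;=\; \mathcal{R}(\alpha, \epsilon),
\]
where $\mathcal{R}(\alpha,\epsilon)$ collects the contributions of the conjugation. The most dangerous surviving terms in $\mathcal{R}$ are of size $\alpha^3\gamma_0^3 |n|^{3\gamma_0 - 2}$; it is precisely the bound $\gamma_0 < 2/3$ that makes the exponent $3\gamma_0 - 2$ negative, allowing $\mathcal{R}(\alpha,\epsilon) \to 0$ as $\epsilon \downarrow 0$ for every fixed $\alpha < \alpha_\star(\gamma_0)$. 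The careful bookkeeping of $\i B_{\alpha,\epsilon}$ and $R_{\alpha,\epsilon}$, keeping track of all the $N_i$ factors introduced by $A$ via \eqref{commutatorV}, is the main technical obstacle: the discrete shifts do not commute with multiplication, so one must expand $e^{F-\T_i F}$ in powers of $F - \T_i F$ up to third order and recombine everything to extract the effective decay exponent $3\gamma_0 - 2$.

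Finally I would invoke the Mourre estimate \eqref{Mourre} at $E$. A spectral-localization step --- via Helffer--Sj\"ostrand estimates on the commutators $[g(H), e^{F_{\alpha,\epsilon}}]$ --- shows $\|g(H)\phi_{\alpha,\epsilon}\| \to 0$ for every continuous $g$ vanishing near $E$. Moreover, as $\alpha \nearrow \alpha_\star(\gamma_0)$ (combined with $\epsilon \downarrow 0$), the vectors $\phi_{\alpha,\epsilon}$ concentrate at spatial infinity and hence converge weakly to zero, making the compact remainder $K$ in \eqref{Mourre} contribute only $o(1)$. Substituting $\phi_{\alpha,\epsilon} = E_\Sigma(H)\phi_{\alpha,\epsilon} + o(1)$ into \eqref{Mourre} yields
\[
c \;\leqslant\; \lim \langle \phi_{\alpha,\epsilon}, [H, \i A]_\circ \phi_{\alpha,\epsilon} \rangle \;=\; 0,
\]
a contradiction that forces $\alpha_\star(\gamma) = +\infty$ on $[0, 2/3)$ and completes the proof.
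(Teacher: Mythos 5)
Your blueprint is the one the paper itself follows---the Froese--Herbst scheme adapted to the lattice, with the correct identification of the critical exponent $\gamma<2/3$ coming from the $|n|^{3\gamma-2}$ size of the worst remainder. But there is a gap in the concluding compactness step, which relies on the normalized conjugated eigenvectors $\phi_{\alpha,\epsilon}$ converging \emph{weakly to zero}, so that the compact part $K$ of the Mourre estimate contributes $o(1)$. This requires $\|e^{F_{\alpha,\epsilon}}\psi\|\to\infty$, and your setup does not guarantee it. For any $\alpha$ strictly below your supremum, $e^{F_{\alpha,\epsilon}}\psi$ converges \emph{strongly} as $\epsilon\downarrow 0$ to $\vartheta_{\alpha,\gamma_0}\psi\in\ell^2(\Z^d)$, so $\phi_{\alpha,\epsilon}$ tends to a nonzero unit vector. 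Taking $\alpha$ up to the supremum may or may not blow up the norms: if the supremum is attained---i.e.\ $\psi$ still lies in $\mathcal{D}(\vartheta_{\alpha_{\max},\gamma_0})$---the norms stay bounded, the weak limit is nonzero, and the Mourre estimate gives no contradiction. The paper sidesteps this by a different but crucial choice: it fixes from the outset \emph{one} pair $(\alpha,\gamma)$ with $\psi\not\in\mathcal{D}(\vartheta_{\alpha,\gamma})$, uses the bounded interpolants $F_{s,\alpha,\gamma}=\Upsilon_s(\alpha\langle n\rangle^\gamma)$ increasing pointwise to $\alpha\langle n\rangle^\gamma$ as $s\downarrow 0$, and concludes $\|e^{F_s}\psi\|\to\infty$ directly from monotone convergence. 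Weak convergence to zero then drops out with no supremum analysis. You can repair your argument the same way: drop the critical value, fix an $\alpha$ outside the decay domain (which exists by your contradiction hypothesis), and let only $\epsilon\downarrow 0$.

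Two smaller points. The Helffer--Sj\"ostrand localization you invoke is much heavier than needed; the paper shows $\|(H-E)\Psi_s\|\to 0$ via the decomposition $H_{F_s}=H+\sum_i S_i(1-e^{\T_i^*F_s-F_s})+S_i^*(1-e^{\T_iF_s-F_s})$ together with the uniform smallness of $1-e^{\T_iF_s-F_s}$ outside a large ball, and then gets $\|E_{\R\setminus\Sigma}(H)\Psi_s\|\leqslant\delta^{-1}\|(H-E)\Psi_s\|\to 0$ by elementary functional calculus. And the virial-type identity you aim for is not an equality $\langle\Psi_s,[H,\i A]_\circ\Psi_s\rangle=\mathcal{R}\to 0$: the paper's Proposition~\ref{propdodoD} produces a right-hand side that is a sum of two \emph{manifestly nonpositive} squared norms plus a remainder tending to zero, giving $\limsup_{s\downarrow 0}\langle\Psi_s,[H,\i A]_\circ\Psi_s\rangle\leqslant 0$, which suffices. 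Extracting those negative squares (essentially singling out $e^{F}A'gA'e^{F}$) is exactly what the long Appendix computation does, and it is the part your ``careful bookkeeping'' sentence would have to actually deliver.
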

Although Theorem \ref{sunnyHunny} does not yield exponential decay of eigenfunctions at non-threshold energies as in the continuous operator case, the result is still useful for applications in Mourre theory. It appears that the method of Froese and Herbst adapts quite well for the one-dimensional discrete operator; however, there seems to be a non-trivial difference between the dimensions $d \geqslant 2$ and $d=1$ in the discrete setting as far as the method is concerned. The exponential decay of eigenfunctions at non-threshold energies in higher dimensions therefore remains an open question because our proof does not attain it. Yet an indication it may occur comes from the Combes-Thomas method presented above. 

On the one hand, if $E$ belongs to the discrete spectrum of $H$, then for any interval $\Sigma$ containing $E$ and located outside the essential spectrum of $H$, $E_{\Sigma}(H)$ is simply a finite rank eigenprojection and so the Mourre estimate holds by default, both sides of \eqref{Mourre} being compact operators. So under Hypothesis 1 only, the corresponding eigenfunction decays sub-exponentially according to Theorem \ref{sunnyHunny}. In this case, the Combes-Thomas method is clearly superior. On the other hand, the Mourre estimate typically holds above the essential spectrum of $H$. So Theorem \ref{sunnyHunny} is able to characterize the decay of eigenfunctions for non-threshold eigenvalues embedded in the essential spectrum, if \textit{any} exist. We emphasize the last point, because to our knowledge there is no example of a Schr\"odinger operator with a non-threshold embedded eigenvalue. What is certainly known however is the existence of operators with a threshold embedded eigenvalue, the Wigner-von Neumann operator being the classical illustration of it, see e.g.\ \cite{RS4}. 

Let us provide an example of a discrete Wigner-von Neumann type operator $H$ that has an eigenvalue embedded in its essential spectrum. An eigenvector for this eigenvalue will be given explicitly. Here's how Theorem \ref{sunnyHunny} turns out to be useful: as the eigenvector will have slow decay at infinity, we infer that the eigenvalue is a threshold, in the sense that no Mourre estimate holds for the pair of self-adjoint operators $(H,A)$ above any interval containing this value. Our example and approach is inspired from the one that appears in \cite[Section XIII.13, Example 1]{RS4}. 

\begin{proposition} 
\label{WVN}
For given $k_1,...,k_d \in (0,\pi)$, let $(t_{k_i})_{i=1}^d$ be real numbers such that 
\[t_{k_i} + \sin(2k_i)n_i-\sin(2k_i n_i) \neq 0, \quad  \text{for all} \ n_i \in \Z.\] 
Then there exists an oscillating potential $V$ on $\Z^d$ that has the asymptotic behavior
\[ V(n_1,...,n_d) = \sum_{i=1}^d -\frac{4\sin(k_i)\sin(2k_in_i)}{n_i}+ O_{k_i,t_{k_i}}(n_i^{-2})\]
and such that $E := 2d - \sum_{i=1}^d 2\cos(k_i)$ is both a threshold and an eigenvalue for $H := \Delta+V$, with eigenvector $\psi(n_1,...,n_d) = \prod_{i=1}^d \sin(k_i n_i) [t_{k_i} + \sin(2k_i)n_i-\sin(2k_i n_i)]^{-1}$ belonging to $\ell^2(\Z^d)$. Moreover, $E \in [0,4d] \subset \sigma_{\mathrm{ess}}(H)$.
\end{proposition}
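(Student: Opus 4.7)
The plan is to define $\psi$ as prescribed, reverse-engineer $V$ from the eigenfunction equation via separation of variables, and then invoke Theorem \ref{sunnyHunny} to force $E$ to be a threshold.

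Set $g_i(n) := t_{k_i} + \sin(2k_i)n - \sin(2k_i n)$ and $\psi_i(n) := \sin(k_i n)/g_i(n)$, so that $\psi(n_1,\ldots,n_d) = \prod_{i=1}^d \psi_i(n_i)$. Under the implicit assumption $\sin(2k_i) \neq 0$ (needed for $\ell^2$-membership; otherwise $g_i$ is bounded and $\psi_i \notin \ell^2(\Z)$), one has $g_i(n) = \sin(2k_i)n + O(1)$, hence $|\psi_i(n)| = O(1/|n|)$ and $\psi \in \ell^2(\Z^d)$. Since $\Delta = \sum_i \Delta_i$ splits along coordinates, a separable ansatz $V(n_1,\ldots,n_d) = \sum_i V_i(n_i)$ reduces $H\psi = E\psi$ to the $d$ one-dimensional equations $(\Delta_i + V_i)\psi_i = E_i\psi_i$ with $E_i := 2 - 2\cos(k_i)$ and $E = \sum_i E_i$. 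Define $V_i(n) := E_i - (\Delta_i\psi_i)(n)/\psi_i(n)$ wherever $\sin(k_i n) \neq 0$ and $V_i(n) := 0$ otherwise. At the exceptional sites, $g_i(n-1) = g_i(n+1)$ (because $\sin(2k_i n) = 0$ there), forcing $\psi_i(n-1) + \psi_i(n+1) = 0$ and hence $(\Delta_i\psi_i)(n) = 0$; the tensor identity $\Delta\psi = \sum_i (\Delta_i\psi_i)\prod_{j\neq i}\psi_j$ then confirms $H\psi = E\psi$ on all of $\Z^d$.

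The heart of the argument is the asymptotic expansion of $V_i$. Using $\sin(k_i(n\pm 1)) = \cos(k_i)\sin(k_i n) \pm \sin(k_i)\cos(k_i n)$, one rewrites
\[
V_i(n) = -2\cos(k_i) + \cos(k_i)\Big[\tfrac{g_i(n)}{g_i(n-1)} + \tfrac{g_i(n)}{g_i(n+1)}\Big] + \sin(k_i)\cot(k_i n)\Big[\tfrac{g_i(n)}{g_i(n+1)} - \tfrac{g_i(n)}{g_i(n-1)}\Big].
\]
The key identity
\[
g_i(n\pm 1) - g_i(n) = \pm 4\sin(k_i)\sin(k_i n)\sin(k_i(n\pm 1)),
\]
obtained via $1-\cos(2x)=2\sin^2 x$ and the angle-addition formulas, combined with $g_i(n\pm 1) = \sin(2k_i)n + O(1)$, yields $g_i(n)/g_i(n\pm 1) = 1 \mp 2\sin(k_i n)\sin(k_i(n\pm 1))/[\cos(k_i)n] + O(1/n^2)$. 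Substituting back and simplifying via the product-to-sum reductions $\sin(k_i(n-1)) + \sin(k_i(n+1)) = 2\sin(k_i n)\cos(k_i)$ and $\sin(k_i(n-1)) - \sin(k_i(n+1)) = -2\cos(k_i n)\sin(k_i)$ produces the claimed asymptotic $V_i(n) = -4\sin(k_i)\sin(2k_i n)/n + O_{k_i,t_{k_i}}(1/n^2)$. This trigonometric bookkeeping is the main obstacle; everything else is organizational.

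From the expansion, $V_i(n) - V_i(n-1) = O(1/n)$, so $\sup_n|n_i(V - \T_iV)(n)| = \sup_{n_i}|n_i(V_i(n_i) - V_i(n_i-1))| < \infty$, verifying Hypothesis 1. Along the axis sequence $n^{(m)} = (m,1,\ldots,1)$, a positive-density set of $m$ satisfies $|\sin(k_1 m)| \geqslant \delta > 0$ (trivial for rational $k_1/\pi$; Weyl equidistribution for irrational), so $|\psi(n^{(m)})| \geqslant c/m$ on that set, which rules out $\psi \in \mathcal{D}(\vartheta_{\alpha,\gamma})$ for any $\alpha,\gamma > 0$ and hence forces $E \notin \Theta(H)$ by Theorem \ref{sunnyHunny}. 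Finally, a standard Weyl-sequence construction using plane waves with appropriate momenta, multiplied by cutoffs translated far from the origin so that $V$ acts negligibly on their support, shows $[0,4d] \subset \sigma_{\mathrm{ess}}(H)$; and $E = \sum_i(2-2\cos k_i) \in (0,4d)$ since each $k_i \in (0,\pi)$.
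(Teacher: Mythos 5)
Your construction follows the same blueprint as the paper's proof: separate variables, reverse-engineer $V$ from the eigenvalue equation for each $\psi_i$, and expand asymptotically. Your trigonometric bookkeeping (the key identity $g_i(n\pm 1)-g_i(n) = \pm 4\sin(k_i)\sin(k_i n)\sin(k_i(n\pm 1))$ and the resulting expansion) is correct and matches what the paper does. You are in fact \emph{more} careful than the paper on two points the paper glosses over: (i) you explicitly handle the exceptional sites where $\sin(k_in)=0$ by checking $g_i(n-1)=g_i(n+1)$ and hence $(\Delta_i\psi_i)(n)=0$ there, which is necessary whenever $k_i/\pi$ is rational (the paper derives $V$ formally from $\psi(n\pm 1)/\psi(n)$ without addressing zeros of $\psi$); and (ii) you flag the requirement $\sin(2k_i)\neq 0$, i.e.\ $k_i\neq \pi/2$, which is implicitly needed for $\psi_i\in\ell^2(\Z)$ but not stated in the Proposition. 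Your threshold argument (lower bound $|\psi(m,1,\ldots,1)|\gtrsim 1/m$ on a positive-density set, hence $\psi\notin\mathcal{D}(\vartheta_{\alpha,\gamma})$, then invoke Theorem~\ref{sunnyHunny}) correctly fills in what the paper compresses to "the eigenvector has slow decay at infinity."

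The one place you genuinely diverge is the inclusion $[0,4d]\subset\sigma_{\mathrm{ess}}(H)$. The paper observes that $H = \sum_i \mathbf{1}\otimes\cdots\otimes H_1(k_i)\otimes\cdots\otimes\mathbf{1}$ and uses the spectral-sum formula for tensor products: since each one-dimensional $V_i$ is compact, $\sigma(H_1(k_i))\supset[0,4]$, so $\sigma(H)=\overline{\sum_i\sigma(H_1(k_i))}\supset[0,4d]$, and no point of an interval is isolated. You instead use a Weyl-sequence argument. This can be made to work, but be careful: the separable $V(n)=\sum_i V_i(n_i)$ does \emph{not} tend to zero as $|n|\to\infty$ (e.g.\ $V(n_1,1,0,\ldots,0)\to V_2(1)$ as $n_1\to\infty$), so "cutoffs translated far from the origin" is not quite enough — you must translate the cubes diagonally so that \emph{every} coordinate becomes large on their support. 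With that clarification your argument is sound, but the paper's tensor-product observation is cleaner precisely because it sidesteps this non-compactness of $V$ in dimension $d\geqslant 2$.
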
 

The exact expression of the potential $V$ is given in the proof. By the notation $O_{k_i,t_{k_i}}(n_i^{-2})$, we mean that this decaying term depends on the choice of $k_i$ and $t_{k_i}$. It is interesting to further note that the eigenvector $\psi$ does not belong to the domain of $A$, for $\left(N_i(S_i^*-S_i)\psi \right)(n_1,...,n_d)$ does not go to zero as $|n_i | \to +\infty$. To further motivate Theorem \ref{sunnyHunny}, let us give another application to discrete Wigner-von Neumann operators. 

\begin{example}[from \cite{Man}] Let $W$ be the discrete Wigner-von Neumann potential given by 
\begin{equation*}
(Wu)(n) = W(n)u(n) := \frac{q\sin(k(n_1+...+n_d))}{|n|}u(n), \quad \forall n \in \Z^d, u \in \mathcal{H},
\end{equation*}
for some $(q,k) \in \R\times (-\pi,\pi)$, and let $V$ be a multiplication operator satisfying for some $\rho >0$,
\begin{equation*}
\sup_{n\in \Z^d} \ \langle n \rangle ^{\rho} |V(n)| < \infty, \quad \text{and}  \quad   
\max_{1 \leqslant i \leqslant d} \sup_{n \in \Z^d} \langle n \rangle ^{\rho}|n_i| |(V-\T_iV)(n)| < +\infty.    
\end{equation*}
Here $\langle n \rangle := \sqrt{1+|n|^2}$. Let $H := \Delta + W +V$ be the Schr\"odinger operator on $\mathcal{H}$, and let  $P$ and $P^{\perp}$ respectively denote the spectral projectors onto the pure point subspace of $H$ and its complement. Let $E(k) := 4-4\cdot \sign(k)\cos(k/2)$, and consider the sets 
\begin{align*}
\mu(H) &:= (0,4) \setminus \{2\pm 2\cos(k/2)\}, \quad \text{for} \ d=1, \\
\mu(H) &:= (0,E(k))\cup(4d-E(k),4d), \quad  \text{for} \ d\geqslant 2.
\end{align*}
By combining Theorem \ref{sunnyHunny} with \cite[Theorem 1.1]{Man}, one can remove the abstract assumption $\ker(H-E) \subset \mathcal{D}(A)$ that appears in the latter Theorem; and for the one-dimensional result, we can use the stronger result of Theorem \ref{HunGun1}. We get the following improved result:
\begin{theorem}
We have that $\mu(H) \subset \Theta(H)$. For all $E \in \mu(H)$ there is an open interval $\Sigma$ containing $E$ such that for all $s>1/2$ and all compact intervals $\Sigma' \subset \Sigma$, the reduced limiting absorption principle for $H$ holds for with respect to $(\Sigma',s,A)$, that is,
\begin{equation*}
\sup \limits_{x \in \Sigma', y \neq 0} \|\langle A \rangle ^{-s} (H-x-\i y)^{-1} P^{\perp}\langle A \rangle ^{-s}\| < \infty.
\end{equation*}
In particular, the spectrum of $H$ is purely absolutely continuous on $\Sigma'$ whenever $P=0$ on $\Sigma'$, and for $d=1$, $H$ does not have any eigenvalues in the interval $(2-2\cos(k/2),2+2\cos(k/2))$.
\end{theorem}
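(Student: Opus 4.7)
The plan has three pieces: verify Hypothesis~1 for the combined potential $W+V$, invoke Theorem~\ref{sunnyHunny} (or Theorem~\ref{HunGun1}) to place every eigenfunction of $H$ inside $\mathcal{D}(A)$ — which is exactly the abstract hypothesis of \cite[Theorem~1.1]{Man} — and, in the one-dimensional case, invoke Theorem~\ref{HunGun1} once more to exclude eigenvalues from $I:=(2-2\cos(k/2),\,2+2\cos(k/2))$. Everything else is already in the cited theorem, so the work reduces to discharging its abstract hypothesis.

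For Hypothesis~1 on $W+V$, the polynomial assumption on $V$ trivially yields boundedness of $n_i(V-\T_iV)(n)$. For $W(n)=q\sin(k(n_1+\cdots+n_d))/|n|$, writing $S:=k(n_1+\cdots+n_d)$ I would split
\[
W(n)-(\T_iW)(n)=q\,\frac{\sin S-\sin(S-k)}{|n|}+q\sin(S-k)\left(\frac{1}{|n|}-\frac{1}{|n-e_i|}\right).
\]
The first summand is $O(|n|^{-1})$ and the second is $O(|n|^{-2})$; multiplying by $n_i$ and using $|n_i|\leqslant|n|$ yields $|n_i(W-\T_iW)(n)|=O(1)$. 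So Hypothesis~1 holds for $W+V$, and Hypothesis~2 is immediate from the assumed decay of $V$ and the $1/|n|$ factor in $W$.

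Granting that the Mourre inclusion $\mu(H)\subset\Theta(H)$ in \cite[Theorem~1.1]{Man} is established without the abstract hypothesis, I would then argue: for $E\in\mu(H)$ and $H\psi=E\psi$, the energy $E$ lies in $\Theta(H)$, so Theorem~\ref{sunnyHunny} gives $\psi\in\mathcal{D}(\vartheta_{\alpha,\gamma})$ for some $\alpha,\gamma>0$. Sub-exponential decay of this strength forces all polynomial moments of $\psi$ to be finite; in particular $N_i\psi\in\mathcal{H}$, so $(S_i^*-S_i)N_i\psi\in\mathcal{H}$, and a summation-by-parts check shows $A_0^*\psi\in\mathcal{H}$, i.e.\ $\psi\in\mathcal{D}(A_0^*)=\mathcal{D}(A)$ since $A_0$ is essentially self-adjoint on $\ell_0(\Z^d)$. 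The abstract hypothesis is thus verified and the full conclusion of \cite[Theorem~1.1]{Man}, namely the reduced LAP on any compact sub-interval $\Sigma'\subset\Sigma$, follows.

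For the one-dimensional absence of eigenvalues, the set $\R\setminus\Theta(H)$ is contained in $\R\setminus\mu(H)\subset\{0,4,\,2\pm 2\cos(k/2)\}$ and therefore disjoint from $I$. Suppose $E\in I$ were an eigenvalue with eigenfunction $\psi\neq 0$. If $E<2$, then by definition $\theta_E\in[E,2]$, and Theorem~\ref{HunGun1} forces $\theta_E\in\R\setminus\Theta(H)$ or $\theta_E=2$. The first option is excluded because no threshold lies in $[E,2]$ (we have $E>2-2\cos(k/2)$, and $2\in\mu(H)\subset\Theta(H)$), so $\theta_E=2$; but the definition of $\theta_E$ then yields $\psi\in\mathcal{D}(\vartheta_\alpha)$ for every $\alpha\geqslant 0$, and the last sentence of Theorem~\ref{HunGun1} gives $\psi=0$, a contradiction. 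The case $E>2$ is symmetric, and $E=2$ is direct from Theorem~\ref{HunGun1} together with $2\in\Theta(H)$. The only delicate point I anticipate is making sure the Mourre inclusion $\mu(H)\subset\Theta(H)$ in \cite[Theorem~1.1]{Man} really does not itself rely on the abstract hypothesis — otherwise the above step would be circular; once that is confirmed, the argument is a routine assembly.
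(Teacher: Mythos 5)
Your proposal is correct and follows exactly the route the paper intends: the paper itself gives only a one-sentence sketch (combine Theorem~\ref{sunnyHunny} with \cite[Theorem~1.1]{Man} to discharge the abstract domain hypothesis $\ker(H-E)\subset\mathcal{D}(A)$, and use Theorem~\ref{HunGun1} to eliminate eigenvalues in $(2-2\cos(k/2),2+2\cos(k/2))$ for $d=1$), and you supply precisely the supporting details — the verification of Hypotheses 1--2 for $W+V$, the passage from sub-exponential (resp.\ exponential) decay to $\psi\in\mathcal{D}(A)$, and the $\theta_E$-dichotomy argument. Your worry about circularity is resolved affirmatively: the inclusion $\mu(H)\subset\Theta(H)$ in \cite{Man} is a Mourre-estimate statement about the pair $(H,A)$ and does not depend on the domain condition on $\ker(H-E)$, which enters only in deriving the LAP.
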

\end{example}

From a perspective of Mourre theory and in an abstract setting, an area of research is to show that the eigenfunction $\psi \in \mathcal{D}(A^n)$ for some $n \geqslant 1$. The first results of this kind were obtained in \cite{Ca} and \cite{CGH}, where it was shown that if $H\psi = E \psi$ with $E$ embedded in the continuous spectrum of $H$, and the iterated commutators $\text{ad}^{k}_A(H)$ are bounded for $k=1,...,\nu$ together with appropriate domain conditions being satisfied by $H$ and $A$, then $\psi \in \mathcal{D}(A^n)$ for all $n \geqslant 0$ satisfying $n+2 \leqslant \nu$, whenever the Mourre estimate holds at $E$. Here $A$ is the conjugate operator to the Hamiltonian $H$ in the abstract framework, and the iterated commutators are defined by $\text{ad}^1_A(H) := [H, \i A]_{\circ}$ and $\text{ad}^k_A(H) := [\text{ad}^{k-1}_A(H), \i A]_{\circ}$. So in the simplest case, one would obtain $\psi \in \mathcal{D}(A)$ provided $\text{ad}^3_A(H)$ exists. Then in \cite{FMS}, the authors reduce by one, from $n+2$ to $n+1$ the number of commutators that need to be bounded in order to obtain $\psi \in \mathcal{D}(A^n)$, and show that the result is optimal. In counterpart of these abstract results, we should point out that in the framework of Schr\"odinger operators, minimal hypotheses yield much stronger results. Indeed, a direct consequence of Theorem \ref{sunnyHunny} is that $\psi \in \mathcal{D}(A^n)$ for all $n\geqslant 0$ assuming only $[H,\i A]_{\circ}$ bounded. 

Finally, we point out that the notion of the $C^1(A)$ class of operators also exists for unbounded operators. It appears to us that the results of this paper could also apply to Schr\"odinger operators with unbounded potentials satisfying the $C^1(A)$ condition. A simple criterion to check if the potential belongs to this class is given in \cite{GMo}[Lemma A.2]. This criterion is straightforward to verify in the setting of this paper. It is however doubtful to us if the generalization of the result to unbounded potentials is significant.

The plan of the paper is as follows: in Section \ref{CTsection}, we provide a proof of Theorem \ref{CTmethod} for the reader's convenience. Section \ref{MultiD} is devoted to the proof of the main result for the multi-dimensional Schr\"odinger operator, namely Theorem \ref{sunnyHunny}. In Section \ref{SectionWVN}, we prove Proposition \ref{WVN}. In Section \ref{hut89}, we further develop the method of Section \ref{MultiD} in the case of the one-dimensional operator, and prove Theorem \ref{HunGun1}. Finally Section \ref{appendix} is the Appendix and contains a long technical calculation proving a key relation required for both Sections \ref{MultiD} and \ref{hut89}. \\

\noindent \textbf{Acknowledgments:}  It is a pleasure to thank my thesis director Sylvain Gol\'enia for his numerous useful comments and advice, and also Thierry Jecko and Milivoje Lukic for enlightening conversations. I thank the anonymous referee for a very helpful and constructive report. I am grateful to the University of Bordeaux for funding my studies.

%%%%%%%%%%%%%%%%%%%%%%%%%%%%%%%%%%%%%%%%%%%%%%%%%%%%%%%%%%%%%%%%%%%%%%%%%%%%%%%%%%
%%%%%%%%%%%%%%%%%%%%%%%%%%%%%%%%%%%%%%%%%%%%%%%%%%%%%%%%%%%%%%%%%%%%%%%%%%%%%%%%%%
%%%%%%%%%%%%%%%%%%%%%%%%%%%%%%%%%%%%%%%%%%%%%%%%%%%%%%%%%%%%%%%%%%%%%%%%%%%%%%%%%%
%%%%%%%%%%%%%%%%%%%%%%%%%%%%%%%%%%%%%%%%%%%%%%%%%%%%%%%%%%%%%%%%%%%%%%%%%%%%%%%%%%%%%%%%%%%%%%%%%%%%%%%%%%%%%%%%%%%%%%%%%%%%%%%%%%%%%%%%%%%%%%%%%%%%%%%%%%%%%%%%%%%%

%%%%%%%%%%%%%%%%%%%%%%%%%%%%%%%%%%%%%%%%%%%%%%%%%%%%%%%%%%%%%%%%%%%%%%%%%%%%%%%%%%

%%%%%%%%%%%%%%%%%%%%%%%%%%%%%%%%%%%%%%%%%%%%%%%%%%%%%%%%%%%%%%%%%%%%%%%%%%%%%%%%%%
%%%%%%%%%%%%%%%%%%%%%%%%%%%%%%%%%%%%%%%%%%%%%%%%%%%%%%%%%%%%%%%%%%%%%%%%%%%%%%%%%%v

%%%%%%%%%%%%%%%%%%%%%%%%%%%%%%%%%%%%%%%%%%%%%%%%%%%%%%%%%%%%%%%%%%%%%%%%%%%%%%%%%%

%%%%%%%%%%%%%%%%%%%%%%%%%%%%%%%%%%%%%%%%%%%%%%%%%%%%%%%%%%%%%%%%%%%%%%%%%%%%%%%%%%
%%%%%%%%%%%%%%%%%%%%%%%%%%%%%%%%%%%%%%%%%%%%%%%%%%%%%%%%%%%%%%%%%%%%%%%%%%%%%%%%%%v

\section{The method of Combes-Thomas: Proof of Theorem \ref{CTmethod}}
\label{CTsection}
We follow the approach given in \cite{Hi} and to a lesser extent \cite{BCH}. We point out that the Combes-Thomas method typically involves techniques of analytic continuation which require some care if the operators are unbounded, see e.g.\  \cite[Section XII.2]{RS4}.  However, since all operators are bounded in this setting, things are simpler. Let $\mathcal{B}(\mathcal{H})$ be the bounded operators on $\mathcal{H}$, and let $\rho = \rho(n) := \sqrt{1+|n|^2}$, $n\in \Z^d$. First we need an estimate:
\begin{proposition} 
\label{propYouGo}
Let $V$ be any bounded real-valued potential, and denote $T := \Delta + V$. Then $\C \ni \lambda \mapsto T(\lambda) := e^{\i  \lambda \rho} T e^{-\i \lambda \rho} \in \mathcal{B}(\mathcal{H})$ is an analytic map. If $E \in \R \setminus \sigma(T)$, then for $\lambda$ satisfying 
\begin{equation}
\label{condition}
\frac{2d \cdot e^{|\lambda |} |\lambda | }{\mathrm{dist}(\sigma(T),E)}< \frac{1}{2}, 
\end{equation}
\begin{equation} 
\label{result222}
\|(T(\lambda) -E)^{-1} \| \leqslant 2 / \mathrm{dist}(\sigma(T),E).
\end{equation}
\end{proposition}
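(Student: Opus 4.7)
The plan is to exploit the crucial observation that $V$, being a multiplication operator, commutes with $e^{\i \lambda \rho}$. Hence the conjugation only acts nontrivially on the Laplacian:
\[
T(\lambda) - T = e^{\i \lambda \rho} \Delta e^{-\i \lambda \rho} - \Delta,
\]
and writing $\Delta = \sum_{i=1}^d (2 - S_i - S_i^*)$, I would compute directly that
\[
(e^{\i \lambda \rho} S_i e^{-\i \lambda \rho} u)(n) = e^{\i \lambda (\rho(n) - \rho(n - e_i))} (S_i u)(n),
\]
so $e^{\i \lambda \rho} S_i e^{-\i \lambda \rho} = M_{\lambda,i} S_i$ with $M_{\lambda,i}$ the multiplication operator by the sequence $e^{\i \lambda(\rho(n) - \rho(n-e_i))}$; an analogous formula with $e_i$ replaced by $-e_i$ holds for $S_i^*$. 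Thus $T(\lambda) - T = -\sum_{i=1}^d [(M_{\lambda,i} - 1) S_i + (M'_{\lambda,i} - 1) S_i^*]$.

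Next I would use that $\rho$ is $1$-Lipschitz on $\Z^d$, so $|\rho(n) - \rho(n \mp e_i)| \leqslant 1$. Combined with the elementary bound $|e^z - 1| \leqslant |z| e^{|z|}$ for $z \in \C$, this gives $\|M_{\lambda,i} - 1\|_\infty \leqslant |\lambda| e^{|\lambda|}$ and similarly for $M'_{\lambda,i}$. Since $\|S_i\| = \|S_i^*\| = 1$, summing over $i$ yields
\[
\|T(\lambda) - T\| \leqslant 2d \, |\lambda| \, e^{|\lambda|}.
\]
For analyticity, I would expand $M_{\lambda,i}(n) = \sum_{k \geqslant 0} \frac{(\i \lambda)^k}{k!} (\rho(n) - \rho(n - e_i))^k$; the Lipschitz bound makes the $k$-th coefficient an $\ell^\infty$-sequence of sup-norm at most $1$, so the series converges in $\mathcal{B}(\mathcal{H})$ uniformly on compact sets of $\lambda$. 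The same argument applies to $M'_{\lambda,i}$, and since $V$ is $\lambda$-independent, $\lambda \mapsto T(\lambda)$ is entire in $\mathcal{B}(\mathcal{H})$.

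Finally, for the resolvent bound, I would use the standard Neumann-series perturbation:
\[
T(\lambda) - E = (T - E)\bigl[\,\mathrm{Id} + (T - E)^{-1}(T(\lambda) - T)\bigr].
\]
Since $E \in \R \setminus \sigma(T)$ and $T$ is self-adjoint, $\|(T - E)^{-1}\| \leqslant 1/\mathrm{dist}(\sigma(T), E)$. The hypothesis \eqref{condition} is exactly $\|(T - E)^{-1}(T(\lambda) - T)\| < 1/2$, whence the bracket is invertible with inverse of norm at most $2$, giving $\|(T(\lambda) - E)^{-1}\| \leqslant 2/\mathrm{dist}(\sigma(T), E)$. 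There is no real obstacle here: the only delicate point is keeping track that the Lipschitz constant of $\rho$ is exactly $1$, which is what makes the conjugation yield a \emph{bounded} perturbation even though $\rho$ itself is unbounded.
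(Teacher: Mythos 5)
Your proof is correct and takes essentially the same route as the paper's: both decompose $T(\lambda)-T$ into the same sum of shifted multiplication operators times $S_i$, $S_i^*$, bound these via the $1$-Lipschitz property of $\rho$ together with $|e^z-1|\leqslant |z|e^{|z|}$, and conclude with a Neumann-series inversion of $T(\lambda)-E$. The only cosmetic difference is that you make the commutation of $V$ with $e^{\i\lambda\rho}$ and the power-series justification of analyticity explicit, where the paper leaves these as implicit remarks.
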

\begin{proof}
A first calculation gives that
\[ T(\lambda) := e^{\i  \lambda \rho } T e^{-\i  \lambda \rho} = T +D(\lambda), \]
where 
\begin{equation*} D(\lambda) :=   \sum_{i =1} ^d \left( 1 - e^{\i \lambda (\rho - \tau_i \rho)} \right) S_i + \left(1-e^{-\i  \lambda (\rho - \tau_i^* \rho)} \right) S^* _i.
\end{equation*}
By the Mean Value Theorem, $| \rho - \tau_i \rho |$ and $| \rho - \tau_i^* \rho |$ are bounded above by one. Also, $\| S_i \| = \| S_i^*\| =1$. Thus $D(\lambda) : \C \mapsto \mathcal{B}(\mathcal{H})$ is a differentiable function, and so $\lambda \mapsto T(\lambda)$ is an analytic family of bounded operators on $\C$.
%by \cite[Theorem VI.4]{RS1}. Indeed, for every linear functional $\phi \in \left(\mathcal{B}\left(\H \right) \right)^{*}$, 
%\[ \lambda \mapsto \phi(T(\lambda)) = \phi(T) + \sum_{i =1} ^d \left( 1 - e^{\i \lambda (\rho - \tau_i \rho)} \right) \phi(S_i) + \left(1-e^{-\i  \lambda (\rho - \tau_i^* \rho)} \right) \phi(S^* _i) \]
%is an analytic map. 
Suppose that $E \in \R \setminus \sigma(T)$. Then
\[ (T(\lambda) -E) = \left(1 + D(\lambda) (T-E)^{-1} \right) (T-E).\]
Thanks to the inequality $|1-e^{z} | \leqslant | z| e^{| z |}$, for all $z \in \C$, we get
\[ \| D(\lambda) \| \leqslant 2d \cdot e^{|\lambda |} | \lambda |. \] 
Also note that $\|(T-E)^{-1}\| \leqslant 1/\text{dist}(\sigma(T),E)$ since $T$ is self-adjoint. Therefore if we require that $|\lambda |$ satisfies \eqref{condition}, it follows that $\| D(\lambda) (T-E)^{-1} \| <1/2$ and we may invert $(T(\lambda) -E)$. Consequently, bounding above by a geometric series gives 
\[ \|(T(\lambda) -E)^{-1} \|  \leqslant  \| (T-E)^{-1} \| \| (1+ D(\lambda) (T-E) ^{-1})^{-1} \|  \leqslant 2 / \text{dist}(\sigma(T),E). \]
\qed
\end{proof}

\noindent \textit{Proof of Theorem \ref{CTmethod}:} Suppose first that $V$ has compact support in $\Z^d$. Then the condition $\mathrm{dist}(\sigma(\Delta), E) > \limsup_{|n| \to + \infty} |V(n)|$ is automatically true since the right side equals zero. Since $H\psi = (\Delta+V)\psi = E\psi$, we write, for $\lambda \in \R$, 
\[ e^{\i \lambda \rho} \psi = -\left( e^{\i \lambda \rho} (\Delta - E)^{-1} e^{-\i \lambda \rho} \right) (e^{\i \lambda \rho} V \psi) = - \left( \Delta (\lambda) -E\right)^{-1} (e^{\i \lambda \rho} V \psi).\]
Because of the analyticity of $\Delta(\lambda)$ and the compactness of the support of $V$, both terms on the right of the previous equation admit an analytic continuation to all of $\C$. Let $\nu$ be the unique positive solution to the equation 
\begin{equation}
\label{condition22}
\R^+ \ni \mu \mapsto \frac{2d \cdot e^{\mu} \mu}{\mathrm{dist}(\sigma(\Delta),E)} =  \frac{1}{2}.
\end{equation}
Set $\lambda = -\i \alpha$, with $\alpha \in (0,\nu)$. Taking norms and applying Proposition \ref{propYouGo} with $T \equiv \Delta$, we see that there exists a constant $C_{E,V,\psi}$ depending on $E$, $V$ and $\psi$, so that 
\[ \| e^{\alpha \rho } \psi \| \leqslant 2 \| \psi \| \cdot  \sup_{n \in \Z^d} | e^{\alpha \rho} V (n)| / \text{dist}(\sigma(\Delta),E)  := C_{E,V,\psi}.\]
We now assume that the support of $V$ is not compact, but $\limsup_{|n| \to +\infty} |V(n)|  < \mathrm{dist}(\sigma(\Delta),E)$ holds. We may write $V = V_c + V_{l}$, where $V_c$ is compactly supported and $\| V_l \| = \sup_{n \in \Z^d} | V_l (n)| \leqslant l$ for some $l < \mathrm{dist}(\sigma(\Delta), E)$. Consider the operator $H_{l}:= \Delta+V_l$. Since $V_l$ is a bounded operator, $H_l(\lambda)$ is an analytic family. If $\epsilon >0$ is any number verifying $\epsilon < \mathrm{dist}(\sigma(\Delta), E) - l $, then $H_l$ has a spectral gap around $E$ of size at least $\epsilon$. This is due to the following spectral inclusion formula, see e.g. \cite[Theorem 3.1]{K1}:
\[ \sigma(H_l) \subset \{ \mu \in \R : \text{dist}\left( \sigma(\Delta),\mu \right) \leqslant \| V_l \| \}. \]
In particular, $(H_l -E)$ is invertible. Since 
\[ (H_l- E) = \left(1+ V_l (\Delta - E)^{-1} \right) (\Delta -E) \]
and  $\| V_l (\Delta-E)^{-1} \| < l / \mathrm{dist}(\sigma(\Delta), E) < 1$, we get
\[ (H_l- E) ^{-1}= (\Delta -E)^{-1} \left(1+ V_l (\Delta - E)^{-1} \right)^{-1} . \]
From the eigenvalue equation $H \psi = (H_l + V_c) \psi = E \psi$, we may write
\[e^{\i \lambda \rho}  \psi = - (H_l (\lambda) - E) ^{-1} (e^{\i \lambda \rho} V_c \psi). \]
Let $\nu$ be the unique positive solution to the equation 
\begin{equation}
\label{condition222}
\R^+ \ni \mu \mapsto \frac{2d \cdot e^{\mu} \mu }{\mathrm{dist}(\sigma(H_l),E)} =  \frac{1}{2}.
\end{equation}
Set $\lambda = -\i \alpha$, with $\alpha \in (0,\nu)$. Taking norms and applying Proposition \ref{propYouGo} with $T \equiv H_l$, we see that there exists a constant $C_{E,V,\psi}$ so that 
\begin{equation*}
 \| e^{\alpha \rho } \psi \| \leqslant 2 \| \psi \| \cdot \sup_{n \in \Z^d} | e^{\alpha \rho} V_c (n)|  / \text{dist}(\sigma(H_l),E) := C_{E,V,\psi}.
 \end{equation*}
\qed

%%%%%%%%%%%%%%%%%%%%%%%%%%%%%%%%%%%%%%%%%%%%%%%%%%%%%%%%%%%%%%%%%%%%%%%%%%%%%%%%%%

%%%%%%%%%%%%%%%%%%%%%%%%%%%%%%%%%%%%%%%%%%%%%%%%%%%%%%%%%%%%%%%%%%%%%%%%%%%%%%%%%%
%%%%%%%%%%%%%%%%%%%%%%%%%%%%%%%%%%%%%%%%%%%%%%%%%%%%%%%%%%%%%%%%%%%%%%%%%%%%%%%%%%v

%%%%%%%%%%%%%%%%%%%%%%%%%%%%%%%%%%%%%%%%%%%%%%%%%%%%%%%%%%%%%%%%%%%%%%%%%%%%%%%%%%

%%%%%%%%%%%%%%%%%%%%%%%%%%%%%%%%%%%%%%%%%%%%%%%%%%%%%%%%%%%%%%%%%%%%%%%%%%%%%%%%%%
%%%%%%%%%%%%%%%%%%%%%%%%%%%%%%%%%%%%%%%%%%%%%%%%%%%%%%%%%%%%%%%%%%%%%%%%%%%%%%%%%%v

%%%%%%%%%%%%%%%%%%%%%%%%%%%%%%%%%%%%%%%%%%%%%%%%%%%%%%%%%%%%%%%%%%%%%%%%%%%%%%%%%%

\section{The multidimensional case : sub-exponential decay of eigenfunctions}
\label{MultiD}

We begin this section by fixing more notation, and build on the one introduced above. Let 
\begin{equation*}
\Delta_i := 2-S_i^*-S_i \quad \text{and}
\end{equation*}
\begin{equation*}
A_{0,i} := -\i\left( 2^{-1}(S_i^*+S_i) + N_i(S_i^*-S_i) \right) = \i \left(2^{-1}(S_i^*+S_i) - (S_i^*-S_i)N_i \right).
\end{equation*}
Let
\begin{equation}
A_i' := \i A_{0,i}, \quad \text{and} \quad A' := \sum_{i=1}^d A_i' = i A_0, \quad \text{with} \quad \mathcal{D}(A') = \mathcal{D}(A_0).
\end{equation}
Then the following is a non-negative operator on $\mathcal{H}$:
\begin{equation*}
[\Delta_i, A_i']_{\circ} = \Delta_i (4-\Delta_i)=2-(S_i^*)^2-(S_i)^2.
\end{equation*} 
A useful identity relating the shift operators and the potential is:
\begin{equation}
\label{commuteSVd}
S_i V = (\T_i V) S_i \quad \text{and} \quad S_i^*V = (\T_i^*V)S_i^*.
\end{equation}
Consider an increasing function $F \in C^{3}([0,\infty))$ with bounded derivative away from the origin. Ideally we would like to take $F(x) = \alpha x$ later on, with $\alpha \geqslant 0$ as in \cite{FH}, but it will turn out that slightly better decay conditions on the derivative are required. So examples to keep in mind for a later application are $F_{s,\alpha,\gamma} : [0,\infty) \mapsto [0,\infty)$, where $(s, \alpha, \gamma) \in [0,\infty) \times [0,\infty) \times [0,2/3)$ and
\begin{equation}
\label{FunctionF}
F_{s,\alpha,\gamma}(x) := \Upsilon_s(\alpha x^{\gamma}).
\end{equation}
Here $\Upsilon_s$ is an interpolating function defined for $s\geqslant 0$ by 
\begin{equation}
\label{Upsilon}
\Upsilon_s(x) := \int _0 ^x \langle st\rangle ^{-2}dt.
\end{equation}
Then $\Upsilon_s(x) \uparrow x$ as $s \downarrow 0$, and 
\begin{equation}
\label{espn1gn}
\Upsilon_s(x) \leqslant c_s \quad \text{for} \quad s>0, \quad \text{and} \quad |\Upsilon_s^{(n)}(x)| \leqslant cx^{-n+1},
\end{equation}
where the first constant in \eqref{espn1gn} depends on $s$ whereas the second one does not. It is readily seen that there are constants $C>0$ not depending on $s$ and $\gamma$ such that
\begin{equation}
\label{DerivativesF}
|F'_{s,\alpha,\gamma}(x)| \leqslant C x^{\gamma-1} \quad \text{and} \quad |F''_{s,\alpha,\gamma}(x)| \leqslant C x^{\gamma-2}.
\end{equation}
We also have that for all $x \geqslant 0$,
\begin{equation}
\label{Johny on the spot}
F'_{s,\alpha,\gamma}(x) \geqslant 0 \quad \text{and} \quad F''_{s,\alpha,\gamma}(x) \leqslant 0.
\end{equation}
So $F_{s,\alpha,\gamma}$ is increasing and concave.
\begin{comment}
where $\chi \in C^{\infty}_c(\R)$ is a cutoff function equal to one in a neighborhood of the origin. For example, $\chi(t) = 1$ for $t \in [-1,1]$, $\chi(t) = e^{1-1/(1-(|t|-1)^2)}$ for $|t| \in [1,2]$, and $\chi(t) = 0$ for $|t| \in [2, \infty)$. 
\begin{Lemma}
For any $[m,\alpha,\gamma) \in [1,\infty)\times [0,\infty)\times [0,2/3)$, the function $F_{m,\alpha,\gamma}$ with $\chi$ as detailed above achieves a unique maximum on $[0,\infty)$ at $x_m^* \in (m,2m)$.
\end{Lemma}
\begin{proof}
$F_{m,\alpha,\gamma}$ is increasing on $[0,m]$ and zero on $[2m,\infty)$. Putting $F_{m,\alpha,\gamma}'(x)=0$ leads to the equation
\begin{equation*}
a(x):= \frac{\gamma}{x} = \frac{2(x/m-1)}{(1-(x/m-1)^2)^2} := b(x), \quad x \in [m,2m].
\end{equation*}
Now, $a(m) > b(m)$, and $a(2m) < b(2m^{-})$. Moreover, $a$ is strictly decreasing while $b$ is strictly increasing on $[m,2m]$, so the latter equation admits a unique solution.
\qed
\end{proof}
\end{comment}

For $n=(n_1,...,n_d) \in \Z^d$, let $\langle n \rangle := \sqrt{1+|n|^2}$. The function $F$ induces a radial operator of multiplication on $\mathcal{H}$, also denoted by $F$ and acting as follows: $(Fu)(n) := F(\langle n \rangle)u(n)$, $\forall u \in \mathcal{H}$. For $i=1,...,d$, we introduce the multiplication operators on $\mathcal{H}$:
\begin{equation}
\label{varphiD}
\varphi_{\ell_i} := (\T_i e^F - e^F)/e^F = e^{\T_i F -F} - 1 \quad \text{and} \quad \varphi_{r_i} := (\T^*_i e^F - e^F)/e^F = e^{\T^*_i F -F} - 1,
\end{equation}
\begin{equation}
\label{grD}
g_{\ell_i} := \varphi_{\ell_i}/N_i \quad\text{and} \quad
g_{r_i} := \varphi_{r_i}/N_i.
\end{equation}
In other words, if $U_i : \Z^d \mapsto \Z^d$ denotes the flow $(n_1,...,n_d) \mapsto (n_1,...,n_i-1,...,n_d)$ and $U_i^{-1}$ its inverse, then $\varphi_{\ell_i}$ and $\varphi_{r_i}$ are multiplication at $n$ respectively by $\varphi_{\ell_i}(n)=e^{F(\langle U_i n \rangle) - F(\langle n \rangle)}-1$ and $\varphi_{r_i}(n)=e^{F(\langle U_i^{-1}n \rangle-F(\langle n \rangle)}-1$, while $g_{\ell_i}$ and $g_{r_i}$ are multiplication at $n$ respectively by $g_{\ell_i}(n)=\varphi_{\ell_i}(n)/n_i$ and $g_{r_i}(n)=\varphi_{r_i}(n)/n_i$.
Since $g_{\ell_i}(n)$ and $g_{r_i}(n)$ are not well-defined when $n_i=0$, set $g_{\ell_i}(n) = g_{r_i}(n) := 0$ in that case. We will need the operator $g$ on $\mathcal{H}$ given by 
\begin{equation}
\label{GX}
(gu)(n) = g(n)u(n) := \frac{F'(\langle n \rangle)}{\langle n \rangle} u(n).
\end{equation}
Three remarks are in order. First, by the Mean Value Theorem, $F'$ bounded away from the origin ensures that $\varphi_{\ell_i}$, $\varphi_{r_i}$, $g_{\ell_i}$ and $g_{r_i}$ are bounded operators on $\mathcal{H}$; secondly, $F$ increasing implies $\sign(n_i)\varphi_{r_i}(n) \geqslant 0$, $\sign(n_i)\varphi_{\ell_i}(n) \leqslant 0$, $g_{r_i}(n) \geqslant 0$, $g_{\ell_i}(n) \leqslant 0$ and $g(n) \geqslant 0$; and thirdly, we remark that $F,\varphi_{\ell_i}$, $\varphi_{r_i}$ and $g$ are radial potentials on $\mathcal{H}$.

\begin{comment}
Then we let $\phi = \chi_m \psi$ for a sequence of cutoff functions $\{\chi_m\}$ and $\psi$ is the eigenfunction in the statement of the Proposition. As $\lim \chi_m \psi = \psi$, the second part of the proof consists in showing that $\lim \ \langle \chi_m \psi, [e^F 
\tA e^F, \Delta ] \chi_m \psi \rangle = 0$, and so \eqref{theFirstoneD} yields \eqref{theZerothoneD} when taking $m \to \infty$. 
\end{comment}

%%%%%%%%%%%%%%%%%%%%%%%%%%%%%%%%%%%%%%%%%%%%%%%%%%%%%%%%%

%%%%%%%%%%%%%%%%%%%%%%%%%%%%%%%%%%%%%%%%%%%%%%%%%%%%%%%%%

%%%%%%%%%%%%%%%%%%%%%%%%%%%%%%%%%%%%%%%%%%%%%%%%%%%%%%%%%

%%%%%%%%%%%%%%%%%%%%%%%%%%%%%%%%%%%%%%%%%%%%%%%%%%%%%%%%%

%%%%%%%%%%%%%%%%%%%%%%%%%%%%%%%%%%%%%%%%%%%%%%%%%%%%%%%%%

\begin{proposition}
\label{propdodoD}
Suppose that Hypothesis 1 holds for the potential $V$. Let $F$ be a general function as described above and suppose that for all $i,j=1,...,d$,
\begin{align*}
\cdot \ \ \dagger_1& \quad |g_{r_i}| \in O(1) \quad \text{and} \quad |g_{{\ell}_i}| \in O(1),\\
\cdot \ \ \dagger_2& \quad |\T_i g -g|N_j \in O(1),\\
\cdot \ \ \dagger_3& \quad |\T_i \varphi_{r_i} - \varphi_{r_i}|N_j, \quad |\T_i \varphi_{\ell_i} - \varphi_{\ell_i}|N_j, \quad |\T_i \varphi_{r_j} - \varphi_{r_j}|N_i \quad \text{and} \quad |\T_i \varphi_{\ell_j} - \varphi_{\ell_j}|N_i \in O(1), \\
\cdot \ \ \dagger_4& \quad |(g_{r_i}-g)-(g_{\ell_i}+g)|N_iN_j \in O(1).
\end{align*}
Suppose that $H\psi = E\psi$, with $\psi \in \mathcal{H}$. Let $\psi_F := e^F\psi$, and assume $\psi_F \in \mathcal{H}$. Then $\psi_F \in \mathcal{D}(\sqrt{g}A')$ and there exist bounded operators $(W_i)_{i=1}^d$, $\mathcal{L}$, $\mathcal{M}$ and $\mathcal{G}$ on $\mathcal{H}$ depending on $F$ such that 
\begin{equation}
\begin{aligned}
\label{theZerothoneD}
\big \langle \psi_F, [H,A']_{\circ} \psi_F \big \rangle &=  - 2 \big\|\sqrt{g}A' \psi_F \big\|^2 - \sum_{i=1}^d \big\|\sqrt{\Delta_i(4-\Delta_i)} W_i \psi_F \big\|^2 \\
& \quad + 2^{-1} \big \langle \psi_F, (\mathcal{L} + \mathcal{M} + \mathcal{G}) \psi_F \big \rangle. 
\end{aligned}
\end{equation}
The $W_i$ are multiplication operators given by $W_i = W_{F;i} := \sqrt{\cosh(\T_i F-F)-1}$. The expressions of $\mathcal{L}$, $\mathcal{M}$ and $\mathcal{G}$ are involved; they are given by \eqref{LL}, \eqref{MM} and \eqref{GG} respectively. The relevant point is that these three operators are a finite sum of terms, each one of the form
\begin{equation}
\label{Hotbabe}
    P_1(S_1,...,S_d,S_1^*,...,S_d^*) T P_2(S_1,...,S_d,S_1^*,...,S_d^*), 
\end{equation}
where $P_1$ and $P_2$ are multivariable polynomials in $S_1,...,S_d,S_1^*,...,S_d^*$ and $T$ are multiplication operators of the kind listed in $\dagger_1 - \dagger_4$.
\end{proposition}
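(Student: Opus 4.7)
\emph{Plan of proof.} The identity will be established by a direct algebraic manipulation of the commutator, with the eigenvalue equation $H\psi = E\psi$ entering through the conjugated form $(e^F H e^{-F} - E)\psi_F = 0$. My first step is to decompose $e^F H e^{-F} = \tilde A + \tilde K$ into its self-adjoint and anti-self-adjoint parts. Using the basic commutation rules $S_i f = (\tau_i f) S_i$ and $S_i^* f = (\tau_i^* f) S_i^*$ for any multiplication operator $f$, one obtains $e^F S_i^{(*)} e^{-F} = e^{F - \tau_i^{(*)} F} S_i^{(*)}$, and expanding $e^{\pm x} = \cosh(x) \pm \sinh(x)$ then shows that $\tilde A - H$ is a sum of terms of the form $(\cosh(\tau_i^{(*)} F - F) - 1)\, S_i^{(*)} = W_i^2\, S_i^{(*)}$, while $\tilde K$ is a sum of the corresponding $\sinh$-operators times shifts. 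This is the source of the factors $W_i$ appearing in the statement.

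Next I would expand $\langle \psi_F, [H, A']_\circ \psi_F\rangle = \langle \psi_F, [\Delta, A']_\circ \psi_F\rangle + \langle \psi_F, [V, A']_\circ \psi_F\rangle$ using $A_i' = i[2^{-1}(S_i^* + S_i) - (S_i^* - S_i) N_i]$ together with $[\Delta_i, A_i']_\circ = \Delta_i(4-\Delta_i)$ and the explicit formula \eqref{commutatorV}. After pushing the $e^F$ factors from $\psi_F$ through each commutator via the conjugation rules above, the symmetric part of $[\Delta, A']_\circ$ reorganizes into $\sum_i W_i\, \Delta_i(4-\Delta_i)\, W_i$, which produces the squared norm $\sum_i \|\sqrt{\Delta_i(4-\Delta_i)}\, W_i\, \psi_F\|^2$. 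The eigenvalue equation is then invoked to rewrite the surviving $V$-contribution by substituting $\tilde K \psi_F = -(\tilde A - E)\psi_F$; what remains at leading order is the pairing of the ``angular'' component $-i(S_i^* - S_i) N_i$ of $A'$ with the leading expansion $\tau_i F - F \approx -F'(\langle n\rangle) n_i/\langle n\rangle = -g\, n_i$, and this pairing produces exactly $-2\|\sqrt{g}\, A'\psi_F\|^2$.

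All other terms arising in the expansion are finite sums of shifts composed with multiplication operators; the hypotheses $\dagger_1$--$\dagger_4$ guarantee that each such coefficient is bounded, and these residuals will be collected into $\mathcal{L}$, $\mathcal{M}$, $\mathcal{G}$ with explicit formulas \eqref{LL}, \eqref{MM}, \eqref{GG}. To make all manipulations rigorous, I would first carry out the calculation with $\phi_R := \chi_R \psi_F \in \ell_0(\Z^d)$, where $\chi_R$ is a compactly-supported cutoff equal to one on the ball of radius $R$; for such $\phi_R$ every vector lies in $\mathcal{D}(A')$ and all expressions are finite sums. Passing to the limit $R \to \infty$ by dominated convergence — using that $\psi_F \in \mathcal{H}$ and that all multiplicative coefficients are uniformly bounded — yields the identity for $\psi_F$, and the finiteness of the right-hand side then forces $\psi_F \in \mathcal{D}(\sqrt{g}\, A')$ as a by-product.

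The hard part will be the sheer bookkeeping. Expanding $A_i'$ introduces factors of $N_i$, and when these are paired with differences such as $\tau_i F - F$, $\tau_i g - g$, or $\tau_i \varphi_{r_j} - \varphi_{r_j}$, one naively encounters unbounded operators. The critical point is to verify that every surviving factor of $N_i$ or $N_iN_j$ is exactly compensated by a decaying difference of the form listed in $\dagger_3$ or $\dagger_4$ — the fourth condition being specifically engineered to tame the $(g_{r_i} - g) - (g_{\ell_i} + g)$ contribution weighted by $N_iN_j$ — so that after all cancellations only the two distinguished negative squares and bounded residuals of the schematic form \eqref{Hotbabe} remain. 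This lengthy but mechanical algebra is deferred to Section~\ref{appendix}.
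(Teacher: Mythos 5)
Your plan has the right architecture --- prove an algebraic identity for compactly supported vectors, pass to the limit along a cutoff sequence $\chi_m$, and deduce $\psi_F\in\mathcal{D}(\sqrt{g}A')$ from the convergence --- but two steps are glossed over in ways that matter. The limiting argument is not mere dominated convergence: the Virial-type quantity that must vanish is $\langle\chi_m\psi,[e^FA'e^F,H-E]\chi_m\psi\rangle$, and $A'$ contains a factor of $N$ and so grows linearly, so uniform boundedness of the multiplicative coefficients alone does not close the argument. What is needed is to rewrite this as $-2\Re\big\langle\langle N\rangle^{-1}A'e^F\chi_m\psi,\ \langle N\rangle e^F(H-E)\chi_m\psi\big\rangle$, use that $\langle N\rangle^{-1}A'$ is bounded, and then prove $\|\langle N\rangle e^F(H-E)\chi_m\psi\|\to 0$ by exhibiting $(H-E)\chi_m\psi$ as a sum of terms carrying the discrete gradients $\chi_m-\tau_i\chi_m$, for which $\langle N\rangle(\chi_m-\tau_i\chi_m)$ is bounded uniformly in $m$; only then does the Dominated Convergence Theorem apply. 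Without this $\langle N\rangle^{-1}\langle N\rangle$ insertion your limit is uncontrolled.

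The second gap concerns the derivation of the exact identity. You propose to split $\tilde H:=e^FHe^{-F}$ into Hermitian and anti-Hermitian parts $\tilde A+\tilde K$ and to substitute $\tilde K\psi_F=(E-\tilde A)\psi_F$ so as to ``rewrite the surviving $V$-contribution.'' That is not how the $V$-part is handled: $[V,A']_\circ$ is simply kept inside $[H,A']_\circ$ on the left of \eqref{theZerothoneD}, and the identity is generated by the exact expansion $[e^FA'e^F,\Delta]=e^F[A',\Delta]e^F+e^FA'[e^F,\Delta]+[e^F,\Delta]A'e^F$, which holds for arbitrary $\phi\in\ell_0(\Z^d)$ before the eigenvalue equation enters; the eigenvalue equation is used only to show the Virial term above vanishes in the limit. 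Your ``leading expansion $\tau_iF-F\approx -gn_i$'' is also a warning sign: the proof does not linearize, it keeps the exact discrepancies $g_{r_i}-g$ and $g_{\ell_i}+g$, and their second difference weighted by $N_iN_j$ is exactly the term $\mathcal{L}$ that $\dagger_4$ is designed to control. Working only to leading order will not reveal the cancellation structure that makes $\mathcal{L}+\mathcal{M}+\mathcal{G}$ bounded, nor produce the precise coefficient $-2$ on $\|\sqrt{g}A'\psi_F\|^2$.
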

\begin{remark}
Formula \eqref{theZerothoneD} has an additional negative term compared to the corresponding formula for the continuous Schr\"{o}dinger operator, cf.\ \cite[Lemma 2.2]{FH}: 
\begin{equation*}
\langle \psi_F, [H,A']_{\circ}\psi_F \rangle = -4 \|\sqrt{g} A' \psi_F \|^2 + \langle \psi_F, \mathcal{Q} \psi_F \rangle, \quad \text{with} \quad \mathcal{Q} = (x \cdot \nabla)^2g -x\cdot\nabla(\nabla F)^2.     
\end{equation*}
\end{remark}
\begin{remark}
As mentioned in \cite{FH}, if we consider the Virial Theorem disregarding operator domains, it is reasonable to expect $\langle \psi, [H,e^F A' e^F]  \psi \rangle = 0$. This idea underlies \eqref{theZerothoneD}.
\end{remark}
\begin{proof}
Let $\phi \in \ell_0(\Z^d)$, the sequences with compact support, and $\phi_F := e^F\phi$. The first step of the proof consists in establishing the following identity :
\begin{equation}
\begin{aligned}
\label{theFirstoneD}
\big \langle \phi, [e^F A' e^F,\Delta] \phi \big \rangle &= \big \langle \phi_F, [A', \Delta] \phi_F \big \rangle - 2\big\|\sqrt{g}A' \phi_F \big\|^2  \\
&\quad - \sum_{1 \leqslant i \leqslant d} \big\|\sqrt{\Delta_i (4-\Delta_i)} W_i \phi_F \big\|^2 + 2^{-1} \big \langle \phi_F, (\mathcal{L} + \mathcal{M} + \mathcal{G}) \phi_F \big \rangle.
\end{aligned}
\end{equation}
The proof of \eqref{theFirstoneD} is technical and long, so it is done in the Appendix. The assumptions of this Proposition together with $F'$ bounded away from the origin imply that the $W_i$, $\mathcal{L}$, $\mathcal{M}$ and $\mathcal{G}$ stemming from this calculation are bounded operators. Exactly where these assumptions are applied are indicated in the Appendix by $(\ddagger)$. The second step consists in using \eqref{theFirstoneD} to prove \eqref{theZerothoneD}. For $m \geqslant 1$, define the cut-off potentials $\chi_m(n) := \chi(\langle n \rangle /m)$ on $\Z^d$, where $\chi \in C^{\infty}_c(\R)$ and $\chi$ equals one in a neighborhood of the origin. 
%For example, $\chi(t) = 1$ for $t \in [-1,1]$, $\chi(t) = e^{-1/(1-(|t|-1)^2)}$ for $|t| \in [1,2]$, and $\chi(t) = 0$ for $|t| \in [2, \infty)$. 
Then \eqref{theFirstoneD} holds with $\phi = \chi_m \psi$ and $\phi_F = e^F\chi_m \psi$. Adding $\big \langle \chi_m \psi, [e^F A' e^F, V] \chi_m \psi \big \rangle = \big \langle e^F \chi_m \psi, [A', V] e^F \chi_m \psi \big \rangle $ to each side of \eqref{theFirstoneD}, and introducing the constant $E$ in the commutator on the left gives
\begin{equation}
\begin{aligned}
\label{theSecondoneD}
\big \langle \chi_m \psi, [e^F A' e^F,H-E] \chi_m \psi \big \rangle &= \big \langle e^F \chi_m \psi, [A', H] e^F  \chi_m \psi \big \rangle - 2 \big\|\sqrt{g}A' e^F \chi_m \psi \big\|^2 \\
& \quad - \sum_{1 \leqslant i \leqslant d} \big\|\sqrt{\Delta_i(4-\Delta_i)} W_i e^F \chi_m \psi \big\|^2 \\
&\quad + 2^{-1}\big \langle e^F \chi_m \psi, (\mathcal{L} + \mathcal{M} + \mathcal{G}) e^F \chi_m \psi \big \rangle. 
\end{aligned}
\end{equation}
Since $e^F\chi_m \psi \to \psi_F$ in $\mathcal{H}$ as $m \to \infty$, the first, third and fourth terms on the right side of \eqref{theSecondoneD} converge. The left side of \eqref{theSecondoneD} is handled in the same way as in \cite[Proposition 4.16]{CFKS}:
\begin{align*}
%\label{theThirdoneD}
\big \langle \chi_m \psi, [e^F A' e^F,H-E] \chi_m \psi \big \rangle &= - 2\Re \big(\big \langle e^F A' e^F \chi_m \psi, (H-E) \chi_m \psi \big \rangle \big) \\
&=- 2\Re \big(\big \langle \langle N \rangle^{-1} A' e^F \chi_m \psi, \langle N \rangle e^F (H-E) \chi_m \psi \big \rangle \big).
\end{align*}
Since $\supp(\chi_m) \subset [-2m,2m]^d$, $\supp((H-E)\chi_m\psi) \subset K := [-2m-1,2m+1]^d$ and so commuting $\chi_m$ with $(H-E)$ gives
\begin{align}
\langle N \rangle e^F (H-E) \chi_m \psi &= \langle N \rangle e^F \mathbf{1}_K (H-E) \chi_m \psi \nonumber \\
& = \sum_{1 \leqslant i \leqslant d} \langle N \rangle(\chi_m - \T_i\chi_m) e^F S_i\psi+\langle N \rangle(\chi_m -\T_i^* \chi_m) e^F S_i^*\psi.\label{ddidD}
\end{align}
\begin{comment}
&= \langle N \rangle e^F \1_K \chi_m \left(H-E\right)\psi + \langle N \rangle e^F \1_K \sum_{1 \leqslant i \leqslant d}(\chi_m - \T_i\chi_m)S_i\psi+(\chi_m -\T_i^* \chi_m)S_i^*\psi \nonumber \\
\end{comment}
An application of the Mean Value Theorem shows that $|\langle N \rangle (\chi_m - \T_i\chi_m)|$ and $|\langle N \rangle (\chi_m - \T_i^* \chi_m)|$ are bounded by a constant independent of $m$. Moreover, $\psi_F \in \mathcal{H}$ and $F'$ bounded imply that $e^F S_i \psi = S_i e^{\T_i^* F -F}\psi_F$ and $e^F S_i^* \psi= S_i^*e^{\T_i F-F} \psi_F \in \mathcal{H}$. Thus the sequence \eqref{ddidD} is uniformly bounded in absolute value in $\mathcal{H}$. Furthermore, it converges pointwise to zero. By Lebesgue's Dominated Convergence Theorem, 
\begin{equation}
\big\|\langle N \rangle e^F (H-E) \chi_m \psi\big \| \to 0 \quad \text{as} \quad m\to \infty.
\end{equation}
Since $\langle N \rangle^{-1} A'$ is a bounded operator on $\mathcal{H}$, the left side of \eqref{theSecondoneD} converges to zero as $m \to \infty$. The only remaining term in \eqref{theSecondoneD} is 2$\|\sqrt{g} A' e^F \chi_m \psi\|^2$, hence it must also converge as $m \to \infty$. To finish the proof, it remains to show that $\psi_F \in \mathcal{D}(\sqrt{g} A')$. Let $\phi \in \ell_0(\Z)$. Then
\begin{equation*}
\big| \big \langle \psi_F, A' \sqrt{g}\phi \big \rangle \big | = \lim \limits_{m\to \infty} \big| \big \langle e^F \chi_m \psi, A' \sqrt{g}\phi \big \rangle \big | \leqslant \Big( \lim \limits_{m\to \infty} \| \sqrt{g} A' e^F \chi_m \psi \| \Big) \|\phi\|.
\end{equation*}
This shows that $\psi_F \in \mathcal{D}\big((-A'\sqrt{g})^*\big) = \mathcal{D}(\sqrt{g} A')$. Then it must be that $\|\sqrt{g} A' e^F \chi_m \psi\|^2 \to \|\sqrt{g} A' \psi_F\|^2$ and the proof is complete after rearranging the terms accordingly in \eqref{theSecondoneD}.
\qed
\end{proof}

As mentionned in the last Proposition, $\mathcal{L},\mathcal{M}$ and $\mathcal{G}$ are a finite sum of terms of the form
\begin{equation*}
P_1(S_1,...,S_d,S_1^*,...,S_d^*) T P_2(S_1,...,S_d,S_1^*,...,S_d^*) 
\end{equation*}
for some polynomials $P_1$ and $P_2$. Going forward, it is essential that the multiplication operators $T=T(n)$ decay radially at infinity. In other words, for the minimal assumptions $\dagger_1-\dagger_4$, we will need $o(1)$ instead of $O(1)$. The following Lemma shows that this is the case for $F = F_{s,\alpha,\gamma}$.

\begin{Lemma}
\label{FunGuyWitch}
Let $F = F_{s, \alpha,\gamma}$ be the function defined in \eqref{FunctionF}. Consider its corresponding functions $\varphi_{r_i}, \varphi_{\ell_i}, g_{r_i}, g_{\ell_i}$ and $g$. The following estimates hold uniformly with respect to $s$ and $\gamma$:
\begin{align*}
\cdot \ \ \ddagger_1& \quad  |g_{r_i}| \quad \text{and} \quad  |g_{\ell_i}| \in O_{\alpha}( \langle n \rangle ^{\gamma-2}), \\
\cdot \ \ \ddagger_2& \quad |\T_i g - g| \in O_{\alpha}(\langle n \rangle ^{\gamma-3}), \\
\cdot \ \ \ddagger_{3}& \quad |\T_i^* \varphi_{r_j} - \varphi_{r_j}| \quad \text{and} \quad  
|\T_i^* \varphi_{\ell_j} - \varphi_{\ell_j}| \in O_{\alpha}(\langle n \rangle ^{\gamma-2}), \\
\cdot \ \ \ddagger_4& \quad |(g_{r_i}-g)-(g_{\ell_i}+g)|  \in O_{\alpha} (\langle n \rangle ^{3\gamma-4}), \\ 
\cdot \ \ \ddagger_5& \quad |(\T_i F -F) - \T_i(\T_i F-F)|  \in O_{\alpha}( \langle n \rangle ^{\gamma-2}).
\end{align*}
Therefore $\ddagger_i$ improve $\dagger_i$ for $i=1,2,3,4$ respectively.
\end{Lemma}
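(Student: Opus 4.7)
The plan is to view every quantity $\varphi_{r_i}, \varphi_{\ell_i}, g_{r_i}, g_{\ell_i}, g$ as a finite difference of the smooth radial function $\tilde F(x):=F(\langle x\rangle)$ on $\R^d$, reduce each estimate to a pointwise bound on a partial derivative of $\tilde F$ at an intermediate point via the Mean Value Theorem (MVT) or Taylor's formula, and then control these derivatives uniformly in $s$ and $\gamma$. The bounds $|F^{(k)}(x)|\leqslant Cx^{\gamma-k}$ for $k=1,2$ are \eqref{DerivativesF}; for $k=3$, a direct computation from $F(x)=\Upsilon_s(\alpha x^\gamma)$ together with \eqref{espn1gn} (applied with $n=3$) yields $|F'''(x)|\leqslant Cx^{\gamma-3}$, uniformly in $s$. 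Combined with the chain rule and $|\partial_{x_j}\langle x\rangle|\leqslant 1$, any $m$-fold partial derivative of $\tilde F$ is then $O(\langle x\rangle^{\gamma-m})$, with each differentiation $\partial_{x_i}$ producing a leading factor of $x_i/\langle x\rangle$ — a point that will be crucial in $\ddagger_4$.

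For $\ddagger_1$, $\ddagger_2$, $\ddagger_3$ and $\ddagger_5$, one or two applications of the MVT express the relevant finite difference as a partial derivative of $\tilde F$ (or of $g=F'(\langle\cdot\rangle)/\langle\cdot\rangle$) at an intermediate point, multiplied by a uniformly bounded exponential factor (bounded because $\gamma<1$ forces $\|F'\|_\infty<\infty$, hence $\|\T_iF-F\|_\infty<\infty$). For $\ddagger_1$ one additionally uses $|e^z-1|\leqslant|z|e^{|z|}$ before dividing by $n_i$; for $\ddagger_2$ the extra $\langle n\rangle^{-1}$ in $g$ compared to $F'(\langle n\rangle)$ produces the improved rate $\langle n\rangle^{\gamma-3}$; and $\ddagger_3$, $\ddagger_5$ are mixed or iterated second differences, each bounded by $\langle n\rangle^{\gamma-2}$.

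The main obstacle is $\ddagger_4$, which requires a third-order Taylor expansion together with the $\pm e_i$ symmetry to annihilate the leading terms. Setting $u(t):=F(\langle n+te_i\rangle)$ and $\psi(t):=e^{u(t)-u(0)}-1$, one has $\varphi_{r_i}(n)=\psi(1)$, $\varphi_{\ell_i}(n)=\psi(-1)$ and $n_ig(n)=\psi'(0)$, so for $n_i\neq 0$,
\begin{equation*}
(g_{r_i}-g)(n)-(g_{\ell_i}+g)(n) \;=\; \frac{\psi(1)-\psi(-1)-2\psi'(0)}{n_i} \;=\; \frac{\tfrac13\psi'''(0)+\tfrac{1}{24}\bigl(\psi^{(4)}(\xi_+)-\psi^{(4)}(\xi_-)\bigr)}{n_i}.
\end{equation*}
A direct computation gives $\psi'''(0)=u'(0)^3+3u'(0)u''(0)+u'''(0)$, with $u^{(k)}(0)=(\partial_{x_i}^k\tilde F)(n)$. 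By the chain-rule observation above, $u'(0)$ and $u'''(0)$ each carry an explicit factor of $n_i$, so $|u'(0)|\leqslant C|n_i|\langle n\rangle^{\gamma-2}$, $|u''(0)|\leqslant C\langle n\rangle^{\gamma-2}$ and $|u'''(0)|\leqslant C|n_i|\langle n\rangle^{\gamma-4}$. Using $|n_i|\leqslant\langle n\rangle$ to absorb extra powers of $|n_i|$, each of the three summands in $\psi'''(0)$ is bounded by $C|n_i|\langle n\rangle^{3\gamma-4}$; the $\psi^{(4)}$ remainder is handled identically via an analogous estimate on $F^{(4)}$. Dividing by $n_i$ yields the claimed $O_\alpha(\langle n\rangle^{3\gamma-4})$.

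The hard part is exclusively the bookkeeping in $\ddagger_4$: one must carefully extract the $n_i/\langle n\rangle$ factors produced by each $\partial_{x_i}$ so that after division by $n_i$ no summand of order slower than $\langle n\rangle^{3\gamma-4}$ survives. All other estimates are routine one- or two-step MVT arguments.
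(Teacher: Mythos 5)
Your proposal is correct and takes essentially the same approach as the paper: both reduce each estimate to a Mean Value Theorem / Taylor bound on partial derivatives of the radial extension $\tilde F(x)=F(\langle x\rangle)$, with the crux of $\ddagger_4$ being that the third $\partial_{x_i}$-derivative of $e^{\tilde F}$ carries an explicit $x_i$ factor that cancels the $1/n_i$ — the paper's iterated MVT on $k(x):=\tfrac{x_i}{\langle x\rangle}F'(\langle x\rangle)e^{F(\langle x\rangle)}=\partial_{x_i}e^{\tilde F(x)}$ produces $\partial^2_{x_i}k(n''')=\partial^3_{x_i}e^{\tilde F}(n''')$, which is exactly your $\psi'''$ at an intermediate point. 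One cosmetic remark: your fourth-order expansion is overkill, since the second-order Taylor formula with Lagrange remainder already gives $\psi(1)-\psi(-1)-2\psi'(0)=\tfrac16\bigl(\psi'''(\xi_+)+\psi'''(\xi_-)\bigr)$ for some $\xi_\pm\in(-1,1)$, and the uniform bound $|\psi'''(\xi)|\leqslant C(|n_i|+1)\langle n\rangle^{3\gamma-4}$ closes the argument without ever invoking $\psi^{(4)}$ or $F^{(4)}$.
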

\begin{proof}
These estimates are simple applications of the Mean Value Theorem (MVT). Let $n=(n_1,...,n_d) \in \Z^d$ and fix $i \in \{1,...,d\}$. There is $n'=(n'_1,...,n'_d)$ with $n'_i \in (n_i, n_i+1)$ and $n'_j = n_j$ for $j \neq i$ such that
\begin{equation*}
    g_{r_i}(n) = \frac{n'_i}{\langle n' \rangle} \frac{F'(\langle n' \rangle ) e^{F(\langle n' \rangle )}}{n_i e^{F(\langle n \rangle )}}.
\end{equation*}
This, together with \eqref{DerivativesF}, and an analogous calculation for $g_{\ell_i}(n)$ shows $\ddagger_1$. Define $g : \R^d \to \R$, $g(x) := F'(\langle x\rangle)\langle x \rangle ^{-1}$. Then $\ddagger_2$ follows from 
\begin{equation*}
    \frac{\partial g}{\partial x_i}(x) = \frac{x_i}{\langle x \rangle} \frac{F''(\langle x \rangle) \langle x \rangle - F'(\langle x \rangle)}{\langle x \rangle^2}.
\end{equation*}
Now fix $i,j \in \{1,...,d\}$. First there is $n'=(n_1',...,n_d')$ with $n_j' \in (n_j,n_j+1)$ and $n_k' = n_k$ for $k \neq j$ such that
\begin{equation*}
(\T_j^* F - F)(n) = \frac{\partial \tilde{F}}{\partial x_j} (n') = \frac{n_j'}{\langle n' \rangle}F'(\langle n'\rangle), \quad \text{with} \quad \tilde{F}(x) = F(\langle x \rangle).
\end{equation*}
Then there is $n''=(n_1'',...,n_d'')$ with $n_i'' \in (n_i',n_i'+1)$ and $n_k'' = n_k'$ for $k \neq i$ such that 
\begin{equation*}
(\T_i^* \varphi_{r_j} - \varphi_{r_j})(n) = \frac{\partial^2 \tilde{F}}{\partial x_i \partial x_j} (n'') e ^{\frac{\partial \tilde{F}}{\partial x_j}(n'')}.
\end{equation*}
This proves $\ddagger_3$ since
\begin{equation*}
\left|\frac{\partial^2 \tilde{F}}{\partial x_i \partial x_j} (x) \right| \leqslant \frac{|F'(\langle x \rangle)|}{\langle x \rangle} + |F''(\langle x \rangle)|.
\end{equation*}
The latter estimate on $\partial^2 \tilde{F} /(\partial x_i \partial x_j)$ also implies $\ddagger_5$. Finally, for $\ddagger_4$, we start with
\begin{equation*}
    g_{r_i}(n)-g(n) = \frac{1}{n_i e^{F(\langle n \rangle)}} \bigg[ \frac{n'_i}{\langle n' \rangle} F'(\langle n' \rangle ) e^{F(\langle n' \rangle )} - \frac{n_i}{\langle n \rangle} F'(\langle n \rangle ) e^{F(\langle n \rangle )} \bigg] = \frac{1}{n_i e^{F(\langle n \rangle)}}\frac{\partial k}{\partial x_i}(n'')
\end{equation*}
where
\begin{equation*}
k: \R^d \to \R, \quad k(x) := \frac{x_i}{\langle x \rangle}F'(\langle x \rangle) e^{F(\langle x \rangle)},
\end{equation*}
and $n''=(n_1'',...,n''_d)$ with $n_i'' \in (n_i,n_i')$ and $n_j'' = n_j$ for $j\neq i$. We compute 
\begin{equation*}
\frac{\partial k}{\partial x_i}(x) = \left(\frac{F'(\langle x \rangle)}{\langle x \rangle} - \frac{x_i^2 F'(\langle x \rangle)}{\langle x \rangle^3} + \frac{x_i^2 F''(\langle x \rangle)}{\langle x \rangle^2} + \frac{x_i^2 (F'(\langle x\rangle))^2}{\langle x \rangle ^2} \right) e^{F(\langle x \rangle)}.
\end{equation*}
Thus for some $n'''=(n_1''',...,n_d''')$ with $n_i''' \in (n_i-1,n_i+1)$ and $n_j'''=n_j$ for $j\neq i$, we have
\begin{equation*}
(g_{r_i}(n)-g(n))-(g_{\ell_i}(n)+g(n)) = \frac{1}{n_i e^{F(\langle n \rangle}}\frac{\partial^2 k}{\partial x_i^2}(n''').    
\end{equation*}
A calculation of $\partial^2 k / \partial x_i^2$ yields the required estimate.
\qed
\end{proof}

\begin{comment}
From the expression of $\partial k / \partial x_i$, we note that $g_{r_i}(n)-g(n) \geqslant 0$ for $|n|$ sufficiently large (and likewise $g_{\ell_i}(n) + g(n) \geqslant 0$). 
\end{comment}
%%%%%%%%%%%%%%%%%%%%%%%%%%%%%%%%%%%%%%%%%%%%%%%%%%%%%%%%%

%%%%%%%%%%%%%%%%%%%%%%%%%%%%%%%%%%%%%%%%%%%%%%%%%%%%%%%%%
We are now ready to prove the main result concerning the multi-dimensional operator $H$: 
\vspace{0.5cm}
\begin{comment}
\begin{theorem}
Suppose that hypothesis 2 holds for the potential $V$, and $H\psi = E \psi$, with $\psi \in \mathcal{H}$. If $E \in \Theta(H)$, then $\psi \in \mathcal{D}(\vartheta_{\alpha,\gamma})$ for all $(\alpha,\gamma) \in [0,\infty) \times [0,2/3)$. 
\end{theorem}
\end{comment}

\noindent \textit{Proof of Theorem \ref{sunnyHunny}}. \quad Let $\psi_{F_{s,\alpha,\gamma}} := e^{F_{s,\alpha,\gamma}}\psi$, and let $\Psi_{s} := \psi_{F_{s,\alpha,\gamma}}/\|\psi_{F_{s,\alpha,\gamma}}\|$. We suppose that for some $(\alpha,\gamma) \in [0,\infty) \times [0,2/3)$, $\psi \not \in \mathcal{D}(\vartheta_{\alpha,\gamma})$ and derive a contradiction. Of course, $\psi_{F_{s,\alpha,\gamma}} \in \mathcal{H}$ for all $s>0$, but by the Monotone Convergence Theorem, $\|\psi_{F_{s,\alpha,\gamma}} \| \to + \infty$ as $s \downarrow 0$. Thus, for any bounded set $B \subset \Z^d$, 
\begin{equation}
\label{weaklyBud}
    \lim \limits_{s \downarrow 0} \sum_{n \in B} |\Psi_{s}(n)|^2 = 0.
\end{equation}
In particular, $\Psi_s$ converges weakly to zero. As $\alpha$ and $\gamma$ are fixed, we shall write $F_s$ instead of $F_{s,\alpha,\gamma}$ for simplicity. Introduce the operator $H_{F_s} := e^{F_s} H e^{-F_s}$. Then $H_{F_s}$ is a bounded operator and $H_{F_s} \Psi_{s} = E \Psi_{s}$. We claim that 
\begin{equation}
\label{Jimmy}
    \lim \limits_{s \downarrow 0} \|(H-E)\Psi_{s}\| = 0.
\end{equation}
To see this, write $H_{F_s}$ as follows:
\begin{equation*}
    H_{F_s} = H + \sum_{1\leqslant i \leqslant d} S_i (1-e^{\T_i^*F_s-F_s}) + S_i^*(1-e^{\T_i F_s -F_s}).
\end{equation*}
To show \eqref{Jimmy}, it is therefore enough to show that 
\begin{equation}
\label{bunHunny}
\lim \limits_{s \downarrow 0} \|(1-e^{\T_i^* F_s-F_s})\Psi_s\| = \lim \limits_{s \downarrow 0} \|(1-e^{\T_i F_s-F_s})\Psi_s\| =0.
\end{equation} 
Let $B(N) = \{n \in \Z^d : \langle n \rangle  \leqslant N\}$, and $B(N)^{\text{c}}$ the complement set. For all $\epsilon >0$, there is $N >0$ such that
\begin{equation*}
    \sup \limits_{\substack{n \in B(N)^{\text{c}} \\ s > 0}} \left|1-e^{(\T_i^* F_s-F_s)(n)}\right| = \sup \limits_{\substack{n \in B(N)^{\text{c}} \\ s > 0}} \left|1-e^{\alpha\gamma \langle n'\rangle^{\gamma-1} \Upsilon'_s(\alpha \langle n' \rangle ^{\gamma})}\right| \leqslant \epsilon 
\end{equation*}
(here $n'=(n_1',...,n_d')$ with $n_i' \in (n_i,n_i+1)$ and $n_j' = n_j$ for $j\neq i$). Combining this with \eqref{weaklyBud} proves the first limit in \eqref{bunHunny}, and the second one is shown in the same way. Thus the claim is proven. Because $E \in \Theta(H)$, there exists an interval $\Sigma := (E-\delta,E+\delta)$ with $\delta >0$, $\eta >0$ and a compact $K$ such that 
\begin{equation}
\label{GunShy}
    E_{\Sigma}(H) [H,A']_{\circ} E_{\Sigma}(H) \geqslant \eta E_{\Sigma}(H) + K.
\end{equation}
By functional calculus, 
\begin{equation}
\label{gitter}
\lim \limits_{s \downarrow 0} \|E_{\R \setminus \Sigma}(H) \Psi_s\| \leqslant \lim \limits_{s \downarrow 0} \delta^{-1} \|E_{\R \setminus \Sigma}(H)(H-E)\Psi_s\| =0.
\end{equation}
It follows by the Mourre estimate \eqref{GunShy} and \eqref{gitter} that 
\begin{equation}
\label{SunnyT}
\liminf \limits_{s \downarrow 0} \ \langle \Psi_s, E_{\Sigma}(H) [H,A']_{\circ} E_{\Sigma}(H) \Psi_s \rangle \geqslant \eta \liminf \limits_{s\downarrow 0}  \|E_{\Sigma}(H) \Psi_s \|^2 = \eta > 0.
\end{equation}
We now look to contradict this equation. We start with
\begin{equation}
\label{TryFoG}
\langle \Psi_s, E_{\Sigma}(H) [H,A']_{\circ} E_{\Sigma}(H) \Psi_s \rangle = \langle \Psi_s,  [H,A']_{\circ} \Psi_s \rangle - f_1(s) - f_2(s), \quad \text{where}
\end{equation}
\begin{equation*}
f_1(s) = \langle \Psi_s, E_{\R \setminus \Sigma}(H) [H,A']_{\circ} E_{\Sigma}(H) \Psi_s \rangle \quad \text{and} \quad
f_2(s) = \langle \Psi_s,  [H,A']_{\circ} E_{\R \setminus \Sigma}(H) \Psi_s \rangle.
\end{equation*}
Applying \eqref{gitter} gives
\begin{equation*}
    \lim \limits_{s \downarrow 0} |f_1(s) | = \lim \limits_{s \downarrow 0} |f_2(s) | =0.
\end{equation*}
Now apply \eqref{theZerothoneD} with $F = F_{s,\alpha,\gamma}$, and after dividing this equation by $\|\Psi_s\|^2$, we have 
\begin{equation*}
    \limsup \limits_{s \downarrow 0} \ \langle \Psi_s,  [H,A']_{\circ} \Psi_s \rangle \leqslant 0.
\end{equation*}
Here we took advantage of the negativity of the first two terms on the right side of \eqref{theZerothoneD}, and used the uniform decay of $\mathcal{L}+\mathcal{M}+\mathcal{G}$ together with the weak convergence of $\Psi_s$ to get $\langle \Psi_s, (\mathcal{L}+\mathcal{M}+\mathcal{G}) \Psi_s \rangle \to 0$ as $s \downarrow 0$. To check this thoroughly, one needs to apply the estimates of Lemma \ref{FunGuyWitch} to where indicated in the Appendix by a $(\ddagger)$. Note that $\mathcal{L}$ given by \eqref{LL} is the most constraining term; it has the necessary decay provided $3\gamma-4 <-2$, i.e.\ $\gamma < 2/3$. Note also that $\ddagger_5$ allows to conclude, by continuity of the map $x\mapsto \sqrt{\cosh(x)-1}$, that $\langle \Psi_s, (W_{F_s;i}-\T_i W_{F_s;i})\Psi_s \rangle$ and like terms converge to zero. 
Thus by \eqref{TryFoG},
\begin{equation*}
\limsup \limits_{s \downarrow 0} \ \langle \Psi_s, E_{\Sigma}(H) [H,A']_{\circ} E_{\Sigma}(H) \Psi_s \rangle \leqslant 0.
\end{equation*}
This is in contradiction with \eqref{SunnyT}, so the proof is complete.
\qed
%%%%%%%%%%%%%%%%%%%%%%%%%%%%%%%%%%%%%%%%%%%%%%%%%%%%%%%%%

%%%%%%%%%%%%%%%%%%%%%%%%%%%%%%%%%%%%%%%%%%%%%%%%%%%%%%%%%

%%%%%%%%%%%%%%%%%%%%%%%%%%%%%%%%%%%%%%%%%%%%%%%%%%%%%%%%%

%%%%%%%%%%%%%%%%%%%%%%%%%%%%%%%%%%%%%%%%%%%%%%%%%%%%%%%%%

%%%%%%%%%%%%%%%%%%%%%%%%%%%%%%%%%%%%%%%%%%%%%%%%%%%%%%%%%
\section{Proof of Proposition \ref{WVN}}
\label{SectionWVN}
As an application of Theorem \ref{sunnyHunny}, we display a Wigner-von Neumann type operator that has an eigenvalue embedded in the essential spectrum. The eigenvalue is proven to be a threshold.

\noindent \textit{Proof of Proposition \ref{WVN}.}
First, we construct the potential in dimension one. Second, we generalize this potential to higher dimensions. Third, we show that the eigenvalue is also a threshold and belongs to the essential spectrum.

\noindent \textbf{Part 1.}  We follow \cite[Section XIII.13, Example 1]{RS4}. Starting with the eigenvalue equation
\[ 2\psi(n)-\psi(n+1)-\psi(n-1) + V(n) \psi(n) = E\psi(n),\] 
we shift terms to write 
\[V(n) = (E-2) + \frac{\psi(n+1)}{\psi(n)} + \frac{\psi(n-1)}{\psi(n)}.\]
We try the Ansatz $\psi(n) := \sin(kn) w_k(n)$, $k \in (0,\pi)$. For simplicity, write $w(n)$ instead of $w_k(n)$. We get 
\begin{align*} 
V(n) &= (E-2) \\
& \quad + \frac{\sin(kn)\cos(k)+\cos(kn)\sin(k)}{\sin(kn)}\frac{w(n+1)}{w(n)} + \frac{\sin(kn)\cos(k)-\cos(kn)\sin(k)}{\sin(kn)}\frac{w(n-1)}{w(n)} \\
&=(E-2) + \cos(k)\left(\frac{w(n+1)}{w(n)}+\frac{w(n-1)}{w(n)}\right)+\sin(k)\frac{\cos(kn)}{\sin(kn)}\left(\frac{w(n+1)}{w(n)}-\frac{w(n-1)}{w(n)}\right).
\end{align*}
For the moment, let us assume that 
\begin{equation}
\label{firsty}
\frac{w(n+1)}{w(n)} \to 1, \quad \text{as} \ |n| \to + \infty
\end{equation}
and
\begin{equation}
\label{thirsty}
\sin(k)\frac{\cos(kn)}{\sin(kn)}\left(\frac{w(n+1)}{w(n)}-\frac{w(n-1)}{w(n)}\right) \to 0, \quad \text{as} \ |n| \to + \infty.
\end{equation}
Thus if we want $V(n) \to 0$, we must have $(E-2) + 2\cos(k) = 0$, i.e.\ $E = 2-2\cos(k)$.
We now seek a suitable $w_k$. 
Let 
\[g_k(n) = g(n) := \sin(2k) n -\sin(2kn).\] 
For simplicity, we would like to define $w_k(n) := 1/g_k(n)$.
%\[w(n) = \frac{1}{1+(g(n))^2}, \quad \text{where} \ g(n) = \sin(2) n -\sin(2n).\] 
But then $w_k(-1)$, $w_k(0)$ and $w_k(1)$ are not well-defined, nor is $w_k$ for that matter if $k=\pi/2$. To circumvent this problem, we could define $w_k(n) := (1+(g_k(n))^2)^{-1}$ instead, as it is done in \cite[Section XIII.13, Example 1]{RS4}, but alternatively we note that there is $t = t_k \in (0,+\infty)$ such that $t_k +g_k(n) = 0$ has no solutions for $n \in \Z$. So we let 
\[ w_k(n) := \frac{1}{t_k + g_k(n)}.\]
In any case, with either choice we certainly have $\psi \in \ell^2(\Z)$ and \eqref{firsty} is clearly satisfied. As for \eqref{thirsty}, we calculate
\begin{align*}
\sin(k)\frac{\cos(kn)}{\sin(kn)} \left(\frac{w(n+1)}{w(n)}-\frac{w(n-1)}{w(n)}\right) &= \sin(k)\frac{\cos(kn)}{\sin(kn)} \frac{g(n-1)-g(n+1)}{[t+g(n-1)][t+g(n+1)]} [t+g(n)] \\
&= \frac{-2\sin(k)\sin(2k)\sin(2kn)}{[t+g(n-1)][t+g(n+1)]} [t+g(n)] \\
& = \frac{-2\sin(k)\sin(2kn)}{n} + O(n^{-2}).
\end{align*}
So \eqref{thirsty} also holds. Note that this calculation follows from these useful relations: 
\[g(n+1)-g(n) = \sin(2k) - 2\sin(k)\cos(2kn+k),\]
\[\frac{1}{[t+g(n+1)]} = \frac{1}{\sin(2k) n} + O(n^{-2}), \quad \text{and} \quad \frac{1}{[t+g(n-1)]} = \frac{1}{\sin(2k) n} + O(n^{-2}).\]
Letting $E = 2-2\cos(k)$, we then find that $V$ is given by 
\begin{align*} 
V(n) & = \cos(k) \left( \frac{2t + g(n-1)+g(n+1)}{[t+g(n-1)][t+g(n+1)]}[t+g(n)] -2\right) - \frac{2\sin(k)\sin(2k)\sin(2kn)[t+g(n)]}{[t+g(n-1)][t+g(n+1)]} \\
&= \cos(k) \left( \frac{g(n)-g(n-1)}{[t+g(n-1)]}-\frac{g(n+1)-g(n)}{[t+g(n+1)]} \right) - \frac{2\sin(k)\sin(2k)\sin(2kn)[t+g(n)]}{[t+g(n-1)][t+g(n+1)]}.
\end{align*}
%V(n) &=  \cos(k) \left( \frac{g(n-1)+g(n+1)}{g(n-1)g(n+1)}g(n) -2\right) +\sin(k) \frac{\cos(kn)}{\sin(kn)} \left(\frac{g(n-1)-g(n+1)}{g(n-1)g(n+1)}g(n) \right)\\
%\[\cos(k) \left( \frac{2t + g(n-1)+g(n+1)}{[t+g(n-1)][t+g(n+1)]}[t+g(n)] -2\right) - \frac{2\sin(k)\sin(2k)\sin(2kn)}{[t+g(n-1)][t+g(n+1)]}[t+g(n)].\]
By a calculation done above, we know the asymptotic behavior of the second term of this expression. Another calculation shows that the first term of this expression has the exact same asymptotic behavior as the second. 
%\begin{align*}
%& \cos(k) \left( \frac{2t + g(n-1)+g(n+1)}{[t+g(n-1)][t+g(n+1)]}[t+g(n)] -2\right)  \\
%= & \cos(k)\left(\frac{g(n)-g(n-1)}{[t+g(n-1)]} - \frac{g(n+1)-g(n)}{[t+g(n+1)]} \right) \\
%= & \cos(k) \left( \frac{\sin(2k) -\sin(2kn) + \sin(2kn-2k)}{[t+g(n-1)]} - \frac{\sin(2k) - \sin(2kn+2k) + \sin(2kn)}{[t+g(n+1)]} \right) \\
%= & \cos(k) \left( \frac{\sin(2k) - 2\cos(2kn-k)\sin(k)}{[t+g(n-1)]} - \frac{\sin(2k) - 2\cos(2kn+k)\sin(k)}{[t+g(n+1)]} \right) \\
%=& \frac{\cos(2kn+k)-\cos(2kn-k)}{n} + O(n^{-2}) \\
%= & \frac{-2 \sin(k)\sin(2kn)}{n} + O(n^{-2}).
%\end{align*}
Thus, we have found a potential having the property that $2-2\cos(k)$ is an eigenvalue of $\Delta +V$ with eigenvector given by $\psi(n) = \sin(kn) [t_k + \sin(2k)n-\sin(2kn)]^{-1}$. Moreover the potential has the asymptotic behavior 
\[ V(n) = -\frac{4\sin(k)\sin(2kn)}{n} + O_{k,t_k}(n^{-2}).\]
\noindent \textbf{Part 2.} We simply extend to two dimensions. The Schr\"odinger equation is rewritten as follows:
\[V(n,m) = (E-4) + \frac{\psi(n+1,m)}{\psi(n,m)} + \frac{\psi(n-1,m)}{\psi(n,m)} + \frac{\psi(n,m+1)}{\psi(n,m)} + \frac{\psi(n,m-1)}{\psi(n,m)}.\]
Try the Ansatz $\psi(n,m) = \sin(k_1 n) w_{k_1}(n) \sin(k_2 m) w_{k_2}(m)$, for some $k_1,k_2 \in (0,\pi)$. For simplicity, write $w_1(n)$ instead of $w_{k_1}(n)$, and $w_2(m)$ instead of $w_{k_2}(m)$. We get 
\begin{align*} 
V(n,m) &= (E-4) \\
& \quad + \cos(k_1)\left(\frac{w_1(n+1)+w_1(n-1)}{w_1(n)}\right) + \sin(k_1)\frac{\cos(k_1n)}{\sin(k_1n)}\left(\frac{w_1(n+1)-w_1(n-1)}{w_1(n)}\right)  \\
& \quad + \cos(k_2)\left(\frac{w_2(m+1)+w_2(m-1)}{w_2(m)}\right) +\sin(k_2)\frac{\cos(k_2m)}{\sin(k_2m)}\left(\frac{w_2(m+1)-w_2(m-1)}{w_2(m)}\right).
\end{align*}
Let $E := 4 - 2\cos(k_1) - 2\cos(k_2)$, and
\[w_1(n) := (t_1 + g_{1}(n))^{-1},  \quad \text{where} \ g_{1}(n) := \sin(2k_1) n -\sin(2k_1n),\]
\[w_2(m) := (t_2 + g_{2}(m))^{-1},  \quad \text{where} \ g_{2}(m) := \sin(2k_2) m -\sin(2k_2m).\]
Here $t_1 = t_{k_1}$ and $t_2 = t_{k_2}$ are real numbers chosen so that $t_1 + g_1(n) \neq 0$ and $t_2 + g_2(m) \neq 0$ for all $n,m \in \Z$. The calculations of the first part show that $V$ is given by
\begin{align*} 
&V(n,m) = \\
& \cos(k_1) \bigg [ \frac{g_1(n)-g_1(n-1)}{t_1+g_1(n-1)}-\frac{g_1(n+1)-g_1(n)}{t_1+g_1(n+1)} \bigg ] - \frac{2\sin(k_1)\sin(2k_1)\sin(2k_1n)[t_1+g_1(n)]}{[t_1+g_1(n-1)][t_1+g_1(n+1)]} \\
& + \cos(k_2) \bigg [ \frac{g_2(m)-g_2(m-1)}{t_2+g_2(m-1)}-\frac{g_2(m+1)-g_2(m)}{t_2+g_2(m+1)} \bigg ] - \frac{2\sin(k_2)\sin(2k_2)\sin(2k_2m)[t_2+g_2(m)]}{[t_2+g_2(m-1)][t_2+g_2(m+1)]}.
\end{align*}
This potential has the property that $4-2\cos(k_1)-2\cos(k_2)$ is an eigenvalue of $\Delta +V$ with eigenvector 
\[ \psi(n,m) = \sin(k_1n) \sin(k_2m) [t_{k_1} + \sin(2k_1)n-\sin(2k_1n)]^{-1}[t_{k_2} + \sin(2k_2)m-\sin(2k_2m)]^{-1}.\]
Moreover $V$ has the asymptotic behavior 
\[ V(n,m) = -\frac{4\sin(k_1)\sin(2k_1n)}{n} -\frac{4\sin(k_2)\sin(2k_2m)}{m} + O_{k_1,t_{k_1}}(n^{-2}) + O_{k_2,t_{k_2}}(m^{-2}).\]
\textbf{Part 3.} We still have to prove that the eigenvalue $E := 4 - 2\cos(k_1) - 2\cos(k_2)$ is a threshold of $H = \Delta +V$. But $V$ satisfies Hypothesis 1, and the eigenvector $\psi$ has slow decay at infinity. So we conclude by Theorem \ref{sunnyHunny} that this eigenvalue is unmistakably a threshold. If $H_1(k)$ denotes the one-dimensional Schr\"odinger operator of Part 1 and $H$ denotes the two-dimensional operator of Part 2, then we have $H = H_1(k_1) \otimes \mathbf{1} + \mathbf{1} \otimes H_1 (k_2)$. A basic result on the spectra of tensor products gives 
\[\sigma(H) = \overline{\sigma(H_1(k_1)) + \sigma(H_1(k_2))} \supset [0,8].\] 
Thus $E \in [0,8] \subset \sigma_{\mathrm{ess}}(H)$. 
\qed

%%%%%%%%%%%%%%%%%%%%%%%%%%%%%%%%%%%%%%%%%%%%%%%%%%%%%%%%%

%%%%%%%%%%%%%%%%%%%%%%%%%%%%%%%%%%%%%%%%%%%%%%%%%%%%%%%%%

%%%%%%%%%%%%%%%%%%%%%%%%%%%%%%%%%%%%%%%%%%%%%%%%%%%%%%%%%

%%%%%%%%%%%%%%%%%%%%%%%%%%%%%%%%%%%%%%%%%%%%%%%%%%%%%%%%%

%%%%%%%%%%%%%%%%%%%%%%%%%%%%%%%%%%%%%%%%%%%%%%%%%%%%%%%%%

%%%%%%%%%%%%%%%%%%%%%%%%%%%%%%%%%%%%%%%%%%%%%%%%%%%%%%%%%

%%%%%%%%%%%%%%%%%%%%%%%%%%%%%%%%%%%%%%%%%%%%%%%%%%%%%%%%%

%%%%%%%%%%%%%%%%%%%%%%%%%%%%%%%%%%%%%%%%%%%%%%%%%%%%%%%%%

%%%%%%%%%%%%%%%%%%%%%%%%%%%%%%%%%%%%%%%%%%%%%%%%%%%%%%%%%

%%%%%%%%%%%%%%%%%%%%%%%%%%%%%%%%%%%%%%%%%%%%%%%%%%%%%%%%%

%%%%%%%%%%%%%%%%%%%%%%%%%%%%%%%%%%%%%%%%%%%%%%%%%%%%%%%%%

%%%%%%%%%%%%%%%%%%%%%%%%%%%%%%%%%%%%%%%%%%%%%%%%%%%%%%%%%

%%%%%%%%%%%%%%%%%%%%%%%%%%%%%%%%%%%%%%%%%%%%%%%%%%%%%%%%%

%%%%%%%%%%%%%%%%%%%%%%%%%%%%%%%%%%%%%%%%%%%%%%%%%%%%%%%%%

\section{The one-dimensional case: exponential decay of eigenfunctions}
\label{hut89}
In this section we deal with the one-dimensional Schr\"odinger operator $H$ on $\mathcal{H} = \ell^2(\Z)$. We follow the same definitions as in the Introduction and Section \ref{MultiD}, but since $i=1$, we will drop this subscript. We shall write $S$ and $S^*$ instead of $S_i$ and $S_i^*$, $N$ instead of $N_i$, etc...Consider an increasing function $F \in C^2([0,\infty))$ with bounded derivative away from the origin. This function induces a radial operator on $\mathcal{H}$ as in Section \ref{MultiD}: $(Fu)(n) := F(\langle n \rangle)u(n)$ for all $u \in \mathcal{H}$.

\begin{proposition}
\label{propdodo}
Suppose that Hypothesis 1 holds for the potential $V$.
Let $F$ be as above, and suppose additionally that 
\begin{equation}
\label{assF1}
|xF''(x)| \leqslant C, \quad \text{for} \ x \ \text{away from the origin}.
\end{equation}
Suppose that $H\psi = E\psi$, with $\psi \in \mathcal{H}$. Let $\psi_F := e^F\psi$, and assume that $\psi_F \in \mathcal{H}$. Then $\psi_F \in \mathcal{D}(\sqrt{g_r-g_{\ell}}A')$ and there exist bounded operators $W$, $M$ and $G$ depending on $F$ such that
\begin{equation}
\label{theZerothone}
\big \langle \psi_F, [H,A'] \psi_F \big \rangle =  - \big\|\sqrt{g_r-g_{\ell}}A' \psi_F \big\|^2 - \big\|\sqrt{\Delta(4-\Delta)} W \psi_F \big\|^2 + 2^{-1}\big \langle \psi_F,  (M+G) \psi_F \big \rangle. 
\end{equation}
The exact expressions of $W,M$ and $G$ are given by \eqref{theOmegadd}, \eqref{FunnyGuy} and \eqref{goHunt} respectively.
\end{proposition}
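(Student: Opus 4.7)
The plan is to mirror the three-step argument used for Proposition \ref{propdodoD}, specialized to $d=1$.

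Step one is to establish an algebraic identity on $\ell_0(\Z)$. For $\phi \in \ell_0(\Z)$ and $\phi_F := e^F\phi$, I would prove
\begin{equation*}
\langle \phi, [e^F A' e^F, \Delta]\phi\rangle = \langle \phi_F, [A', \Delta]\phi_F\rangle - \|\sqrt{g_r-g_\ell}\,A'\phi_F\|^2 - \|\sqrt{\Delta(4-\Delta)}\,W\phi_F\|^2 + 2^{-1}\langle \phi_F, (M+G)\phi_F\rangle,
\end{equation*}
by conjugating the explicit expression of $A_0$ in terms of $N$, $S$, $S^*$ by $e^F$, using the intertwining relation $Se^F = e^{\T^* F}S = (\varphi_\ell + 1)e^F S$ and its dual, and then regrouping. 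The two negative-square terms should emerge as completions of squares tied to $[\Delta, A']_\circ = 2 - S^2 - (S^*)^2$, with $W := \sqrt{\cosh(\T F - F) - 1}$ absorbing the ``Laplacian'' part and $M$, $G$ collecting the remaining bounded error terms. This is essentially the $d=1$ specialization of the Appendix calculation; the assumption $|xF''(x)| \leq C$ enters here through the Mean Value Theorem, which shows that discrete differences of $g_r$, $g_\ell$, $\varphi_r$, $\varphi_\ell$ weighted by $N$ remain bounded, so that $M$ and $G$ are bounded operators.

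Step two is to replace $\phi$ by $\chi_m\psi$, where $\chi_m(n) := \chi(\langle n \rangle / m)$ is a smooth compactly supported cutoff equal to $1$ near the origin, and add $\langle\chi_m\psi, [e^F A' e^F, V]\chi_m\psi\rangle = \langle e^F\chi_m\psi, [A',V]e^F\chi_m\psi\rangle$ to both sides of the identity. The left-hand side becomes $\langle\chi_m\psi, [e^F A' e^F, H-E]\chi_m\psi\rangle$, which equals
\begin{equation*}
-2\,\Re\langle\langle N\rangle^{-1}A'e^F\chi_m\psi,\,\langle N\rangle e^F(H-E)\chi_m\psi\rangle.
\end{equation*}
The commutator argument of \cite[Proposition 4.16]{CFKS} used in Proposition \ref{propdodoD} applies verbatim: $(H-E)\chi_m\psi$ is supported where the shifts of $\chi_m$ differ from $\chi_m$, the quantities $\langle N\rangle|\chi_m - \T\chi_m|$ and $\langle N\rangle|\chi_m - \T^*\chi_m|$ are bounded uniformly in $m$ by the Mean Value Theorem, and $e^F S^{(*)}\psi = S^{(*)}e^{\T^{(*)}F - F}\psi_F$ lies in $\mathcal{H}$ because $F'$ is bounded; dominated convergence then yields $\|\langle N\rangle e^F(H-E)\chi_m\psi\| \to 0$, so that the left-hand side tends to zero.

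Step three is to pass to the limit on the right-hand side. Three of the four terms converge immediately by the strong convergence $e^F\chi_m\psi \to \psi_F$ combined with the boundedness of $[A',H]_\circ$, $\sqrt{\Delta(4-\Delta)}\,W$, $M$ and $G$, so the remaining term $\|\sqrt{g_r-g_\ell}\,A'e^F\chi_m\psi\|^2$ must also converge. The duality argument from Proposition \ref{propdodoD} then gives, for every $\phi \in \ell_0(\Z)$,
\begin{equation*}
|\langle\psi_F, A'\sqrt{g_r-g_\ell}\,\phi\rangle| = \lim_{m\to\infty}|\langle e^F\chi_m\psi, A'\sqrt{g_r-g_\ell}\,\phi\rangle| \leq \Big(\lim_{m\to\infty}\|\sqrt{g_r-g_\ell}\,A'e^F\chi_m\psi\|\Big)\|\phi\|,
\end{equation*}
hence $\psi_F \in \mathcal{D}((A'\sqrt{g_r-g_\ell})^*) = \mathcal{D}(\sqrt{g_r-g_\ell}\,A')$ and the norms converge to $\|\sqrt{g_r-g_\ell}\,A'\psi_F\|^2$; rearranging yields \eqref{theZerothone}. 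The principal obstacle is Step one: carrying out the regrouping so that precisely the combination $g_r - g_\ell$, rather than some generic positive combination of $g_r$, $g_\ell$ and $g$, appears as the coefficient of $\|A'\phi_F\|^2$. This is where the antisymmetric structure $A' = \i(2^{-1}(S^*+S) - (S^*-S)N)$ combines decisively with the assumption $|xF''(x)| \leq C$, which tames the otherwise dangerous contributions behaving like $N F''(\langle N\rangle)$ that arise from commuting $N$ through shifts of $e^F$.
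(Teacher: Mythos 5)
Your proposal matches the paper's proof: step one is the algebraic identity \eqref{theFirstone}, proven in the Appendix by expanding $[e^F A' e^F,\Delta]$ via the commutators \eqref{commutatorSD}--\eqref{commutatorS*D} and averaging two developments (one producing a coefficient $g_r$, the other $-g_\ell$), while steps two and three reproduce verbatim the cutoff and duality argument of Proposition \ref{propdodoD}. Two small slips in your write-up do not affect validity: the intertwining relation should read $Se^F = e^{\T F}S$, not $e^{\T^* F}S$ (your final form $(\varphi_\ell+1)e^F S$ is nonetheless correct), and $\i\bigl(2^{-1}(S^*+S)-(S^*-S)N\bigr)$ is $A_0$, not $A' = \i A_0$.
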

\begin{proof}
The proof is done in two steps. The first step consists in proving that 
\begin{equation}
\begin{aligned}
\label{theFirstoned1}
\langle \phi, [e^F A'e^F,\Delta]\phi \rangle &=  \langle \phi_F, [A',\Delta] \phi_F \rangle - \|\sqrt{g_r-g_{\ell}}A' \phi_F\|^2 \\
& \quad - \|\sqrt{\Delta(4-\Delta)} W \phi_F\|^2 + 2^{-1} \langle \phi_F,  (M+G) \phi_F \rangle. 
\end{aligned}
\end{equation}
The proof of this is in the Appendix starting from \eqref{theFirstone}. That $F'$ is bounded away from the origin ensures that $W$ and $(g_r-g_{\ell})$ are bounded. The additional assumption \eqref{assF1} ensures that $(\T^* \varphi_r -\varphi_r)N$ and like terms are bounded. The second step is the same as that of Proposition \ref{propdodoD}, and the proof is identical.
\qed
\end{proof}

%%%%%%%%%%%%%%%%%%%%%%%%%%%%%%%%%%%%%%%%%%%%%%%%%%%%%%%%%%%%%%%%%%%%%%%%%%%%%%%%%%%%%%%%%%%%%%%%%%

%%%%%%%%%%%%%%%%%%%%%%%%%%%%%%%%%%%%%%%%%%%%%%%%%%%%%%%%%%%%%%%%%%%%%%%%%%%%%%%%%%%%%%%%%%%%%%%%%%

%%%%%%%%%%%%%%%%%%%%%%%%%%%%%%%%%%%%%%%%%%%%%%%%%%%%%%%%%%%%%%%%%%%%%%%%%%%%%%%%%%%%%%%%%%%%%%%%%%

%%%%%%%%%%%%%%%%%%%%%%%%%%%%%%%%%%%%%%%%%%%%%%%%%%%%%%%%%%%%%%%%%%%%%%%%%%%%%%%%%%%%%%%%%%%%%%%%%%

%%%%%%%%%%%%%%%%%%%%%%%%%%%%%%%%%%%%%%%%%%%%%%%%%%%%%%%%%%%%%%%%%%%%%%%%%%%%%%%%%%%%%%%%%%%%%%%%%%

%%%%%%%%%%%%%%%%%%%%%%%%%%%%%%%%%%%%%%%%%%%%%%%%%%%%%%%%%%%%%%%%%%%%%%%%%%%%%%%%%%%%%%%%%%%%%%%%%%

%%%%%%%%%%%%%%%%%%%%%%%%%%%%%%%%%%%%%%%%%%%%%%%%%%%%%%%%%%%%%%%%%%%%%%%%%%%%%%%%%%%%%%%%%%%%%%%%%%
 
%%%%%%%%%%%%%%%%%%%%%%%%%%%%%%%%%%%%%%%%%%%%%%%%%%%%%%%%%%%%%%%%%%%%%%%%%%%%%%%%%%%%%%%%%%%%%%%%%%

\begin{Lemma}
\label{Goodlemma}
Suppose that $H\psi = E \psi$ with $\psi \in \ell^2(\Z)$. Let $F$ be a general function as above, and assume that $\psi_F := e^F \psi \in \ell^2(\Z)$. Define the operator
\begin{equation}
\label{H(F)}
H_F := e^{F} H e^{-F}.
\end{equation} 
Then $H_F$ is bounded, $H_F \psi_F = E \psi_F$ and there exist bounded operators $C_F$ and $R_F$ such that
\begin{equation}
\label{gogog}
H_F = C_F H + (2 - 2C_F) + 2^{-1} R_F, \quad \text{where}
\end{equation}
\begin{equation}
\label{CF}
C_F:=2^{-1}\left(e^{F-\T F} + e^{F-\T^* F}\right) \quad \text{and}
\end{equation}
\begin{equation}
\begin{aligned}
\label{dosw}
R_F &:= V(2-2C_F) + (\T \varphi_r -\varphi_r)(S^*-S) + (\varphi_{\ell}-\T^*\varphi_{\ell})(S^*-S) \\
&\quad + (g_r-g_{\ell})A' - 2^{-1}(g_r-g_{\ell})(S^*+S).
\end{aligned}
\end{equation}
\end{Lemma}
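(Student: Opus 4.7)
The plan is to compute $H_F$ directly from $H = \Delta + V = 2 - S^* - S + V$, then reorganize the result to exhibit the claimed structure. First, the eigenvalue equation $H_F \psi_F = E \psi_F$ is immediate: using $Sf = (\tau f) S$ and $S^* f = (\tau^* f) S^*$ for any multiplication operator $f$, one checks that $e^{F - \tau F} S e^{F} = e^{F} S$ (and analogously for $S^*$), so $H_F \psi_F = e^{F} H \psi = E \psi_F$. For boundedness of $H_F$, the Mean Value Theorem together with $F'$ bounded away from the origin shows that $F - \tau F$ and $F - \tau^* F$ are bounded sequences, whence the same conjugation identities give
\begin{equation*}
H_F = 2 + V - e^{F - \tau^* F} S^* - e^{F - \tau F} S,
\end{equation*}
which is a bounded operator on $\mathcal{H}$.

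For the decomposition \eqref{gogog}, I would split each exponential in $H_F$ into its symmetric part $C_F$ and half the antisymmetric difference $D_F := e^{F - \tau F} - e^{F - \tau^* F}$. Since $S^* + S = 2 - \Delta$, this rearranges to
\begin{equation*}
H_F = (2 - 2 C_F) + C_F H + V(1 - C_F) + \tfrac{1}{2} D_F (S^* - S),
\end{equation*}
where the $V(1 - C_F)$ compensates the $V$ absorbed into $C_F H$. The task therefore reduces to matching the last two terms with $\tfrac{1}{2} R_F$, i.e.\ showing that
\begin{equation*}
D_F (S^* - S) = (\tau \varphi_r - \varphi_r)(S^* - S) + (\varphi_\ell - \tau^* \varphi_\ell)(S^* - S) + (g_r - g_\ell) A' - \tfrac{1}{2}(g_r - g_\ell)(S^* + S).
\end{equation*}

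The algebraic core is the pointwise identity $D_F = \tau \varphi_r - \tau^* \varphi_\ell$, which is immediate from the definitions of $\varphi_r$, $\varphi_\ell$ and the shifts $\tau$, $\tau^*$. To reduce the right-hand side above, I would substitute $A' = -\tfrac{1}{2}(S^* + S) + (S^* - S) N$ and use the commutation $(S^* - S) N = N(S^* - S) + (S^* + S)$ together with $(g_r - g_\ell) N = \varphi_r - \varphi_\ell$, valid on all of $\Z$ since $\varphi_r(0) = \varphi_\ell(0)$ by symmetry of $\langle n \rangle = \sqrt{1 + n^2}$ (which makes the convention $g_r(0) = g_\ell(0) = 0$ consistent with the formal quotient). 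The $(g_r - g_\ell)(S^* + S)$ contributions then cancel, leaving $(\varphi_r - \varphi_\ell)(S^* - S)$, which combines with the terms $(\tau \varphi_r - \varphi_r)(S^* - S) + (\varphi_\ell - \tau^* \varphi_\ell)(S^* - S)$ to give exactly $(\tau \varphi_r - \tau^* \varphi_\ell)(S^* - S) = D_F(S^* - S)$, as required.

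The main obstacle is strictly bookkeeping: every step is elementary, but the signs and the interplay among $N$, $S$, $S^*$, and the various shifts of $\varphi_\ell$, $\varphi_r$ must be tracked carefully. As a bonus, the manipulation above shows that $(g_r - g_\ell) A'$, though initially defined only on $\mathcal{D}(A')$, coincides on that subspace with a bounded operator built from $\varphi_r - \varphi_\ell$ and $g_r - g_\ell$, hence extends uniquely to a bounded operator on $\mathcal{H}$; in particular, $R_F$ is bounded without any second-derivative hypothesis on $F$.
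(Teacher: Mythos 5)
Your proof is correct and follows essentially the same route as the paper: decomposing $e^{F-\tau F}$ and $e^{F-\tau^*F}$ into their symmetric part $C_F$ and antisymmetric part $\tfrac12 D_F$ is algebraically identical to the paper's device of writing $H_F$ in two equivalent ways and averaging, and both proofs then reduce $D_F(S^*-S)$ to the claimed form of $R_F$ via the pointwise identities $D_F=\tau\varphi_r-\tau^*\varphi_\ell$ and $(g_r-g_\ell)N=\varphi_r-\varphi_\ell$ (with the $n=0$ check, which the paper carries as an explicit $\mathbf{1}_{\{n=0\}}$ term that vanishes). Your closing remark that $(g_r-g_\ell)A'$ agrees with a manifestly bounded combination of $\varphi_r-\varphi_\ell$ and $g_r-g_\ell$, hence needs no second-derivative hypothesis on $F$, is a correct and worthwhile observation that the paper leaves implicit.
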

\begin{proof}
Because $F'$ is bounded away from the origin, both $e^FSe^{-F}\phi = Se^{\T^*F-F}\phi $ and $e^FS^*e^{-F}\phi = S^*e^{\T F-F}\phi$ belong to $\ell^2(\Z)$ whenever $\phi \in \ell^2(\Z)$. Thus $H_F$ is bounded, and $H_F \psi_F = E \psi_F$ follows immediately. Now
\begin{equation*}
\label{siri}
H_F = 2+V-e^{F-\T F}S - e^{F-\T^* F}S^*.
\end{equation*} 
Rewriting this relation in two different ways, we have 
\begin{align*}
H_F &= e^{F-\T F} H + (2+V)(1-e^{F-\T F})+(e^{F-\T F}-e^{F-\T^* F})S^*, \\
H_F &= e^{F-\T^* F} H + (2+V)(1-e^{F-\T^* F})+(e^{F-\T^* F}-e^{F-\T F})S.
\end{align*}
Adding these two relations gives
\begin{equation}
\label{johnnygood}
2H_F = 2C_F  H + (2+V)(2-2C_F)+(e^{F-\T F}-e^{F-\T^* F})(S^*-S).
\end{equation}
We further develop the third term on the right side:
\begin{align*}
(e^{F-\T F}-e^{F-\T^* F})(S^*-S) &= (\T\varphi_r - \T^* \varphi_{\ell})(S^*-S) \\
&= (\T \varphi_r - \varphi_r)(S^*-S) + (\varphi_{\ell}- \T^* \varphi_{\ell})(S^*-S) + (\varphi_r-\varphi_{\ell})(S^*-S) \\
&= (\T \varphi_r - \varphi_r)(S^*-S) + (\varphi_{\ell}- \T^* \varphi_{\ell})(S^*-S) \\
&\quad + (g_r-g_{\ell})A' -2^{-1}(g_r-g_{\ell})(S^*+S) + (\varphi_r-\varphi_{\ell})\mathbf{1}_{\{n=0\}}(S^*-S).
\end{align*}
Here, $\mathbf{1}_B$ is the projector onto $B\subset \Z$. Note that $(\varphi_r-\varphi_{\ell})\mathbf{1}_{\{n=0\}} = 0$, and thus \eqref{gogog} is shown. 
\qed
\end{proof}

%%%%%%%%%%%%%%%%%%%%%%%%%%%%%%%%%%%%%%%%%%%%%%%%%%%%%%%%%%%%%%%%%%%%%%%%%%%%%%%%%%%%%%%%%%%%%%%%%%

%%%%%%%%%%%%%%%%%%%%%%%%%%%%%%%%%%%%%%%%%%%%%%%%%%%%%%%%%%%%%%%%%%%%%%%%%%%%%%%%%%%%%%%%%%%%%%%%%%

%%%%%%%%%%%%%%%%%%%%%%%%%%%%%%%%%%%%%%%%%%%%%%%%%%%%%%%%%%%%%%%%%%%%%%%%%%%%%%%%%%%%%%%%%%%%%%%%%%

%%%%%%%%%%%%%%%%%%%%%%%%%%%%%%%%%%%%%%%%%%%%%%%%%%%%%%%%%%%%%%%%%%%%%%%%%%%%%%%%%%%%%%%%%%%%%%%%%%

%%%%%%%%%%%%%%%%%%%%%%%%%%%%%%%%%%%%%%%%%%%%%%%%%%%%%%%%%%%%%%%%%%%%%%%%%%%%%%%%%%%%%%%%%%%%%%%%%%

%%%%%%%%%%%%%%%%%%%%%%%%%%%%%%%%%%%%%%%%%%%%%%%%%%%%%%%%%%%%%%%%%%%%%%%%%%%%%%%%%%%%%%%%%%%%%%%%%%

%%%%%%%%%%%%%%%%%%%%%%%%%%%%%%%%%%%%%%%%%%%%%%%%%%%%%%%%%%%%%%%%%%%%%%%%%%%%%%%%%%%%%%%%%%%%%%%%%%

%%%%%%%%%%%%%%%%%%%%%%%%%%%%%%%%%%%%%%%%%%%%%%%%%%%%%%%%%%%%%%%%%%%%%%%%%%%%%%%%%%%%%%%%%%%%%%%%%%

\begin{comment}
\begin{theorem} Suppose that Hypotheses 1,2 and 3 hold for the potential $V$. Suppose $H\psi = E \psi$ with $\psi \in \ell^2(\Z)$. Define 
\begin{align*}
\theta_E &:= \sup \left\{ (E-2)/\cosh{\alpha} + 2 : \alpha \geqslant 0, \psi \in \mathcal{D}(\vartheta_{\alpha}) \right \}, \quad \text{for} \quad E < 2,\\
\theta_E &:= \inf \left\{ (E-2)/\cosh{\alpha} + 2 : \alpha \geqslant 0, \psi \in \mathcal{D}(\vartheta_{\alpha}) \right \}, \quad \text{for} \quad E > 2.
\end{align*}
Then $\theta_E \in \R\setminus \Theta(H) \cup \{+2\}$. If $E=2$, the statement is that either $\psi \in \mathcal{D}(\vartheta_{\alpha})$ for all $\alpha \geqslant 0$ or $2 \in \R \setminus \Theta(H)$.
\end{theorem}
\end{comment}
We are now ready to prove the main result concerning the one-dimensional operator $H$:
\vspace{0.5cm}

\noindent\textit{Proof of Theorem \ref{HunGun1}, the first part}. \quad We first handle the case $E\neq 2$. Suppose that the statement of the theorem is false. Then $\theta_E = \theta_E(\alpha_0) = (E-2)/\cosh(\alpha_0) + 2 \in \Theta(H) \setminus \{+2\}$ for some $\alpha_0 \in [0,\infty)$, and there is an interval 
\begin{equation}
\Sigma_0:= (\theta_E(\alpha_0)-2\delta,\theta_E(\alpha_0)+2\delta)
\end{equation}
such that the Mourre estimate holds there, i.e.\
\begin{equation}
\label{esoi9}
E_{\Sigma_0}(H)[H,A']_{\circ}E_{\Sigma_0}(H) \geqslant \eta E_{\Sigma_0}(H) + K
\end{equation}
for some $\eta >0$ and some compact operator $K$. For the remainder of the proof, $\delta$, $\eta$ and $K$ are fixed. If $\alpha_0 > 0$, choose $\alpha_1>0$ and $\gamma>0$ such that 
\begin{equation}
%\label{alphagamma}
\alpha_1 < \alpha_0 < \alpha_1 +\gamma.
\end{equation} 
If however $\alpha_0=0$, let $\alpha_1=0$ and $\gamma>0$. By continuity of the map $\theta_E(\alpha) = (E-2)/\cosh(\alpha)+2$, $\theta_E(\alpha_1) \to \theta_E(\alpha_0)$ as $\alpha_1 \to \alpha_0$, so taking $\alpha_1$ close enough to $\alpha_0$ we obtain intervals
\begin{equation*}
\Sigma_1:= (\theta_E(\alpha_1)-\delta,\theta_E(\alpha_1)+\delta) \subset \Sigma_0
\end{equation*}
with the inclusion remaining valid as $\alpha_1 \to \alpha_0$. Multiplying to the right and left of \eqref{esoi9} by $E_{\Sigma_1}(H)$, we obtain
\begin{equation}
\label{esoi10}
E_{\Sigma_1}(H)[H,A']_{\circ}E_{\Sigma_1}(H) \geqslant \eta E_{\Sigma_1}(H) + E_{\Sigma_1}(H)KE_{\Sigma_1}(H).
\end{equation}
Later in the proof $\alpha_1$ will be taken even closer to $\alpha_0$ allowing $\gamma$ to be as small as necessary in order to lead to a contradiction (in this limiting process, $\delta$, $\eta$ and $K$ are fixed). Before delving into the details of the proof, we expose the strategy. For a suitable sequence of functions $\{F_s(x)\}_{s>0}$, let
\begin{equation}
\label{lsl44}
\Psi_s := e^{F_s}\psi/ \| e^{F_s}\psi\|.
\end{equation}
With $F_s$ and $\Psi_s$ instead of $F$ and $\psi_F$ respectively, we apply Proposition \ref{propdodo} to conclude that 
\begin{equation}
\label{okyt}
\limsup \limits_{s \downarrow 0} \ \langle \Psi_s, [H,A']_{\circ} \Psi_s \rangle \leqslant \limsup \limits_{s \downarrow 0} | \langle \Psi_s, 2^{-1}\big(M_{F_s}+G_{F_s}\big) \Psi_s \rangle |.
\end{equation}
Notice how the the negativity of the first two terms on the right side of \eqref{theZerothone} was crucial. We have also written $M_{F_s}$ and $G_{F_s}$ instead of $M$ and $G$ to show the dependence on $F_s$. The first part of the proof consists in showing that 
\begin{equation}
\label{bighome}
\limsup \limits_{s \downarrow 0} \ \langle \Psi_s, [H,A']_{\circ} \Psi_s \rangle \leqslant \limsup \limits_{s \downarrow 0} |  \langle \Psi_s, 2^{-1}\big(M_{F_s}+G_{F_s}\big) \Psi_s \rangle | \leqslant c \epsilon_{\gamma} 
\end{equation}
for some $\epsilon_{\gamma}>0$ satisfying $\epsilon_{\gamma} \to 0$ when $\gamma \to 0$. Here and thereafter, $c >0$ denotes a constant independent of $s$, $\alpha_1$ and $\gamma$. The second part of the proof consists in showing that 
\begin{equation}
\label{keytwo2}
\limsup \limits_{s \downarrow 0} \|(H - \theta_E(\alpha_1)) \Psi_s \| \leqslant  c \epsilon_{\gamma}.
\end{equation}
Roughly speaking \eqref{keytwo2} says that $\Psi_s$ has energy concentrated about $\theta_E(\alpha_1)$ and so localizing \eqref{bighome} about this energy will lead to \begin{equation}
\label{dldl9}
\limsup \limits_{s \downarrow 0} \ \langle \Psi_s, E_{\Sigma_1}(H) [H,A']_{\circ} E_{\Sigma_1}(H) \Psi_s \rangle \leqslant c\epsilon_{\gamma}.
\end{equation}
However, the Mourre estimate \eqref{esoi10} holds on $\Sigma_1$. In the end, the contradiction will come from the fact that the Mourre estimate asserts that the left side of \eqref{dldl9} is not that small.

We now begin in earnest the proof. Notice that $\psi \in \mathcal{D}(\vartheta_{\alpha_1})$ but $\psi \not\in \mathcal{D}(\vartheta_{\alpha_1+\gamma})$. Let $\Upsilon_s$ be the interpolating function defined in \eqref{Upsilon}, and for $s>0$ let
\begin{equation}
\label{hyy9}
F_s(x) := \alpha_1 x + \gamma \Upsilon_s(x).
\end{equation}
As explained in the multi-dimensional case, $F_s$ induces a radial potential as follows : $(F_s u)(n) := F_s (\langle n \rangle)u(n)$, for all $u \in \ell^2(\Z)$. By \eqref{espn1gn}, $e^{F_s}\psi \in \ell^2(\Z)$ for all $s>0$, but $\|e^{F_s} \psi \| \to \infty$ as $s \downarrow 0$. To ease the notation, we will be bounding various quantities by the same constant $c>0$, a constant that is independent of $\alpha_1$, $\gamma$, $s$ and of position $x$ (or $n$).

%%%%%%%%%%%%%%%%
%%%%%%%%%%%%%%%%%%%%%%%%%%%%%%%%%%
%%%%%%%%%%%%%%%%

\noindent \textbf{Part 1.}  We use Proposition \ref{propdodo} with $F_s$ replacing $F$, and so we verify that $F_s$ satisfies the hypotheses of that proposition. Since
\begin{equation*}
%\label{aya22}
F_s'(x) = \alpha_1 + \gamma\Upsilon_s'(x) \quad \text{and} \quad
F_s''(x) = \gamma\Upsilon_s''(x),
\end{equation*}
indeed $|F_s'(x)| \leqslant c$, $|xF_s''(x)| \leqslant c$. Dividing \eqref{theZerothone} by $\|e^{F_s}\psi\|^2$ throughout we obtain \eqref{okyt} as claimed. To prove \eqref{bighome}, we need two ingredients. First, for any bounded set $B \subset \Z$,
\begin{equation}
\label{okkk0}
\lim \limits_{s \downarrow 0} \sum_{n\in B} |\Psi_s(n)|^2=0.
\end{equation}
In particular, $\Psi_s$ converges weakly to zero. What's more, we also have for any $k \in \N$
\begin{equation}
\label{okkk9}
\lim \limits_{s \downarrow 0} \sum_{n\in B} |(S^k\Psi_s)(n)|^2=0, \ \ \ \text{and} \ \ \  
\lim \limits_{s \downarrow 0} \sum_{n\in B} |((S^*)^k\Psi_s)(n)|^2=0.
\end{equation}
Now $M_{F_s}$ and $G_{F_s}$ are a finite sum of terms of the form $P_1(S,S^*)TP_2(S,S^*)$, where $P_1$ and $P_2$ are polynomials and the $T=T(n)$ are sequences. The second item to show is that,
\begin{equation}
\label{m+q}
|T(n)| \leqslant c(\langle n \rangle ^{-1} + \epsilon_{\gamma}).
\end{equation}
In other words we want smallness coming from decay in position $n$ or from $\gamma$. Outside a sufficiently large bounded set, decay in position can be converted into smallness in $\gamma$ by using \eqref{okkk0} while $P_1(S,S^*)$ and $P_2(S,S^*)$ get absorbed in the process thanks to \eqref{okkk9}. Consider first $M=M_{F_s}$ given by \eqref{FunnyGuy}. Applying the Mean Value Theorem (MVT) gives the uniform estimates in $s$
\begin{equation}
\label{hock}
|\T F_s - F_s| \quad \text{and} \quad |\T^* F_s - F_s| \in O(1).
\end{equation} It follows that 
\begin{equation*}
|\varphi_{\ell}| \quad \text{and} \quad |\varphi_r| \in O(1), \quad \text{and} \quad
|g_r-g_{\ell}| \in O(\langle n \rangle^{-1}).
\end{equation*}
To handle the term $(\T^*\varphi_{\ell}-\varphi_{\ell})$, define the function $f(x):=e^{F_s(\langle x-1 \rangle )-F_s(\langle x \rangle)}$. Then $(\T^*\varphi_{\ell}-\varphi_{\ell})(n) = f(n+1) - f(n)$. Applying twice the MVT gives
\begin{equation*}
|(\T^*\varphi_{\ell}-\varphi_{\ell})(n)| \leqslant c(\langle n \rangle ^{-3} + \gamma \langle n \rangle ^{-1}).
\end{equation*}
The same estimate holds for the similar terms like $(\varphi_r-\T\varphi_r)$, $(\T^*\varphi_r-\varphi_r)$ and so forth. We turn our attention to $G=G_{F_s}$ given by \eqref{goHunt}. By \eqref{hock},
$|W_{F_s}| \in O(1)$. To estimate $(W_{F_s}-W_{\T^*F_s})$, let $g(x) := \sqrt{\cosh(F_s(\langle x-1 \rangle)-F_s(\langle x \rangle))-1}$, so that  $(W_{F_s}-W_{\T^*F_s})(n) = g(n)-g(n+1)$. Moreover,
\begin{equation*}
g'(x) = \frac{(F_s'(\langle x-1\rangle)-F_s'(\langle x \rangle))\sinh(F_s(\langle x-1 \rangle)-F_s(\langle x \rangle))}{2\sqrt{\cosh(F_s(\langle x-1\rangle)-F_s(\langle x\rangle))-1}}.
\end{equation*}
If $\alpha_1 > 0$, then $|F_s(\langle x-1\rangle)-F_s(\langle x\rangle)| \geqslant c'\alpha_1$ for some constant $c'>0$ independent of $x$ and $s$, and so $\cosh(F_s(\langle x-1 \rangle)-F_s(\langle x\rangle))-1$ is uniformly bounded from below by a positive number. Applying the MVT to $(F_s'(\langle x-1\rangle)-F_x'(\langle x\rangle))$ yields the estimate 
\begin{equation*}
|(W_{F_s}-W_{\T^*F_s})(n)| \leqslant c(\langle n \rangle ^{-3} + \gamma \langle n \rangle ^{-1}).
\end{equation*}
If however $\alpha_1 = 0$, then 
\begin{equation}
\label{dks}
|(\T F_s - F_s)(n) - (F_s-\T^*F_s)(n)| \leqslant c \gamma \langle n \rangle ^{-1}.
\end{equation} 
By continuity of the function $x \mapsto \sqrt{\cosh(x)-1}$ we have that for any $\epsilon_{\gamma}>0$,
\begin{equation*}
|W_{F_s}-W_{\T^* F_s}| = |\sqrt{\cosh(\T F_s-F_s)-1}-\sqrt{\cosh( F_s-\T^*F_s)-1}| \leqslant \epsilon_{\gamma}
\end{equation*}
whenever \eqref{dks} holds. A similar argument works for $(W_{F_s}-W_{\T F_s})$. Thus \eqref{m+q} is proven, and this shows \eqref{bighome} when combined with the fact that $\Psi_s$ converges weakly to zero.

%%%%%%%%%%%%%%%%
%%%%%%%%%%%%%%%%%%%%%%%%%%%%%%%%%%
%%%%%%%%%%%%%%%%

\noindent \textbf{Part 2.} We now prove \eqref{keytwo2}. Consider Lemma \ref{Goodlemma} with $F_s$ instead of $F$. We claim that 
\begin{equation}
\label{keyone}
\lim \limits_{s \downarrow 0} \Big\|\left( C_{F_s} H + 2-E -2C_{F_s} \right) \Psi_s \Big\| =0.
\end{equation}
By \eqref{gogog} of Lemma \ref{Goodlemma}, this is equivalent to showing that 
\begin{equation*}
\lim \limits_{s \downarrow 0} \big\| R_{F_s} \Psi_s \big\| =0.
\end{equation*}
Dividing each term in \eqref{theZerothone} by $\|e^{F_s}\psi\|^2$, we see that $\|\sqrt{g_r-g_{\ell}}A' \Psi_s\| \leqslant c$. Let $\chi_N$ denote the characteristic function of the set $\{ n \in \Z : (g_r-g_{\ell}) < N ^{-1}\}$. Then 
\begin{equation*}
\limsup \limits_{s \downarrow 0} \|(g_r-g_{\ell})A' \Psi_s\| \leqslant \limsup \limits_{s \downarrow 0} N^{-\frac{1}{2}}\|\chi_N\sqrt{g_r-g_{\ell}}A' \Psi_s\|+\|(1-\chi_N)(g_r-g_{\ell})A' \Psi_s\| \leqslant cN^{-\frac{1}{2}}.
\end{equation*}
Here we used the fact that $1-\chi_N$ has support in a fixed, bounded set as $s\downarrow 0$. Since $N$ is arbitrary, this shows that $\|(g_r-g_{\ell})A' \Psi_s\| \to 0$ as $s \downarrow 0$. The other terms of $R_{F_s}$ are handled similarly. Note that for the term containing $V$ we use the fact it goes to zero at infinity, and from Part 1, $(\T\varphi_r-\varphi_r)$, $(\varphi_{\ell}-\T^*\varphi_{\ell})$ and $(g_r-g_{\ell})$ also go to zero at infinity. Hence \eqref{keyone} is proved. Let $\kappa := \kappa(n) = \sign(n)$. From the expression of $F'_s$, we have the estimates : 
\begin{equation*}
|(F_s - \T F_s)(n) - \kappa(n)\alpha_1 | \leqslant c(\alpha_1 \langle n\rangle^{-1} + \gamma) \quad \text{and} \quad |(F_s - \T^* F_s)(n) - (-\kappa(n)\alpha_1) | \leqslant c(\alpha_1 \langle n\rangle^{-1} + \gamma).
\end{equation*}
Therefore, outside a fixed bounded set we have 
\begin{equation}
\label{odk}
|(F_s - \T F_s) - \kappa \alpha_1 | \leqslant c\gamma  \ \ \ \text{and} \ \ \ |(F_s - \T^* F_s) - (-\kappa \alpha_1) | \leqslant c\gamma.
\end{equation}
By continuity of the exponential function, we have for any $\epsilon_{\gamma} > 0$ that
\begin{equation*}
|e^{F_s - \T F_s} - e^{\kappa \alpha_1}| \leqslant \epsilon_{\gamma} \ \ \ \text{and} \ \ \ |e^{F_s - \T ^*F_s} - e^{-\kappa \alpha_1}| \leqslant \epsilon_{\gamma}
\end{equation*} 
whenever the respective terms of \eqref{odk} hold. It follows from \eqref{keyone} that 
\begin{equation*}
%\label{keytwo}
\limsup \limits_{s \downarrow 0} \big \|\big[ 2^{-1}\left(e^{\alpha_1}+e^{-\alpha_1}\right)H + 2-E -\left(e^{\alpha_1}+e^{-\alpha_1}\right) \big] \Psi_s \big\| \leqslant c\epsilon_{\gamma}.
\end{equation*}
Dividing this expression by $\cosh(\alpha_1)$ proves \eqref{keytwo2}.

%%%%%%%%%%%%%%%%%%%%%%%%%%
%%%%%%%%%%%%%%%%%%%

%%%%%%%%%%%%%%%%%%%%%%%%%%
%%%%%%%%%%%%%%%%%%%

%%%%%%%%%%%%%%%%%%%%%%%%%%
%%%%%%%%%%%%%%%%%%%

%%%%%%%%%%%%%%%%%%%%%%%%%%
%%%%%%%%%%%%%%%%%%%
\noindent \textbf{Part 3.} By functional calculus and \eqref{keytwo2}, we have
\begin{equation}
\label{dod}
\limsup \limits_{s \downarrow 0} \|E_{\R \setminus \Sigma_1}(H) \Psi_s\| \leqslant \limsup \limits_{s \downarrow 0} \delta^{-1} \big\|E_{\R \setminus \Sigma_1}(H) \big(H - \theta_E(\alpha_1)\big)\Psi_s \big\| \leqslant c\epsilon_{\gamma}.
\end{equation}
We have  
\begin{equation}
\label{ls8}
\langle \Psi_s, E_{\Sigma_1}(H)[H,A']_{\circ}E_{\Sigma_1}(H) \Psi_s \rangle = \langle \Psi_s, [H,A']_{\circ} \Psi_s \rangle - f_1(s) - f_2(s), \quad \text{where}
\end{equation}
\begin{equation*}
f_1(s) = \langle \Psi_s, E_{\R \setminus \Sigma_1}(H)[H,A']_{\circ}E_{\Sigma_1}(H) \Psi_s \rangle, \quad \text{and} \quad
f_2(s) = \langle \Psi_s, [H,A']_{\circ}E_{\R \setminus \Sigma_1}(H) \Psi_s \rangle.
\end{equation*}
By \eqref{dod},
\begin{equation*}
\max_{i=1,2} \limsup \limits_{s \downarrow 0} |f_i(s)| \leqslant c \epsilon_{\gamma}.
\end{equation*}
This together with \eqref{bighome} and \eqref{ls8} implies
\begin{equation}
\label{sls9}
\limsup \limits_{s \downarrow 0} \ \langle \Psi_s, E_{\Sigma_1}(H)[H,A']_{\circ}E_{\Sigma_1}(H) \Psi_s \rangle \leqslant c\epsilon_{\gamma}.
\end{equation}
On the other hand, by the Mourre estimate \eqref{esoi10}, we have that
\begin{equation}
\label{esoi11}
\langle \Psi_s, E_{\Sigma_1}(H)[H,A']_{\circ}E_{\Sigma_1}(H) \Psi_s \rangle \geqslant \eta \|E_{\Sigma_1}(H) \Psi_s \|^2 + \langle \Psi_s, E_{\Sigma_1}(H)KE_{\Sigma_1}(H) \Psi_s \rangle.
\end{equation}
Thus, since $\Psi_s$ converges weakly to zero and $E_{\Sigma_1}(H)KE_{\Sigma_1}(H)$ is compact, we have, using \eqref{dod}
\begin{equation}
\label{eq7893}
\liminf \limits_{s \downarrow 0} \ \langle \Psi_s, E_{\Sigma_1}(H) [H,A']_{\circ}E_{\Sigma_1}(H)\Psi_s \rangle \geqslant \eta (1-c\epsilon_{\gamma}^2).
\end{equation}
Recall that $\epsilon_{\gamma} \to 0$ as $\gamma \to 0$. Taking first $\alpha_1$ sufficiently close to $\alpha_0$, we can then take $\gamma$ small enough to see that \eqref{eq7893} contradicts \eqref{sls9}. The proof is complete for the case $E \neq 2$.
%%%%%%%%%%%%%%%%%%%%%
%%%%%%%%%%%%%%%%%%%%%%%%%%%%%%%%%%%%%%%%%%

%%%%%%%%%%%%%%%%%%%%%
%%%%%%%%%%%%%%%%%%%%%%%%%%%%%%%%%%%%%%%%%%

%%%%%%%%%%%%%%%%%%%%%
%%%%%%%%%%%%%%%%%%%%%%%%%%%%%%%%%%%%%%%%%%

%%%%%%%%%%%%%%%%%%%%%
%%%%%%%%%%%%%%%%%%%%%%%%%%%%%%%%%%%%%%%%%%

\noindent\textbf{Part 4.} Case $E=2$: the proof is almost the same as before but a bit simpler. We briefly go over the proof to point out the small adjustments. Assuming the statement of the theorem to be false, we have that $2 \in \Theta(H)$, and also that $\psi \not\in \mathcal{D}(\vartheta_{\alpha})$ for some $\alpha \in (0,\infty)$. Since $\Theta(H)$ is open, there is an interval 
\begin{equation*}
\Sigma := (2-\delta,2+\delta)
\end{equation*}
such that the Mourre estimate holds there, i.e.
\begin{equation}
E_{\Sigma}(H)[H,A']_{\circ}E_{\Sigma}(H) \geqslant \eta E_{\Sigma}(H) + K
\end{equation}
for some $\eta>0$ and some compact operator $K$. Let $\alpha_0 := \inf \{ \alpha \geqslant 0 : \psi \not\in \mathcal{D}(\vartheta_{\alpha})\}$. As before, let $\alpha_1$ and $\gamma$ be such that $\alpha_1 < \alpha_0 < \alpha_1 + \gamma$ if $\alpha_0 >0$; if $\alpha_0 = 0$, let $\alpha_1 =0$. Let $F_s$ and $\Psi_s$ be defined as before (see \eqref{hyy9} and \eqref{lsl44}), so that $\Psi_s$ has norm one but converges weakly to zero. The calculation of Part 1 shows that 
\begin{equation*}
\limsup \limits_{s \downarrow 0} \ \langle \Psi_s, [H,A']_{\circ} \Psi_s \rangle \leqslant c\epsilon_{\gamma},
\end{equation*}
whereas the calculation of Part 2 shows that 
\begin{equation*}
\lim \limits_{s \downarrow 0} \|(H-2)\Psi_s\| \leqslant c\epsilon_{\gamma}.
\end{equation*}
The functional calculus then gives 
\begin{equation*}
\limsup \limits_{s \downarrow 0} \|E_{\R \setminus \Sigma(H)}\Psi_s\| \leqslant \limsup \limits_{s\downarrow 0} \delta^{-1} \|E_{\R \setminus \Sigma}(H)(H-2)\Psi_s\| \leqslant c \epsilon_{\gamma}.
\end{equation*}
As in Part 3, we get inequalities \eqref{sls9} and \eqref{eq7893} with $\Sigma$ instead of $\Sigma_1$. Taking $\alpha_1$ very close to $\alpha_0$ in order to take $\gamma$ sufficiently small, these two inequalities disagree. The proof is complete.
\qed

%- - -  - ----------------------------------------------------------------------------------
% ------------------------------------------------------------------------------------------
%  -----------------------------------------------------------------------------------------
%   -----------------------------
%    ---------------------------------------------------------------------------------------
%     --------------------------------------------------------------------------------------
%      -------------------------------------------------------------------------------------

%%%%%%%%%%%%%%%%%%%%%%%%%%%
%%%%%%%%%%%%%%%%%%%%%%%%%%%%%%%%%%%%%%%%%%%%%%%%%

%%%%%%%%%%%%%%%%%%%%%%%%%%%
%%%%%%%%%%%%%%%%%%%%%%%%%%%%%%%%%%%%%%%%%%%%%%%%%

%%%%%%%%%%%%%%%%%%%%%%%%%%%
%%%%%%%%%%%%%%%%%%%%%%%%%%%%%%%%%%%%%%%%%%%%%%%%%

%%%%%%%%%%%%%%%%%%%%%%%%%%%
%%%%%%%%%%%%%%%%%%%%%%%%%%%%%%%%%%%%%%%%%%%%%%%%%

%%%%%%%%%%%%%%%%%%%%%%%%%%%
%%%%%%%%%%%%%%%%%%%%%%%%%%%%%%%%%%%%%%%%%%%%%%%%%

%%%%%%%%%%%%%%%%%%%%%%%%%%%
%%%%%%%%%%%%%%%%%%%%%%%%%%%%%%%%%%%%%%%%%%%%%%%%%

%%%%%%%%%%%%%%%%%%%%%%%%%%%
%%%%%%%%%%%%%%%%%%%%%%%%%%%%%%%%%%%%%%%%%%%%%%%%%

It remains to show however that 
\begin{equation}
H\psi = E\psi, \quad \text{and} \quad \psi \in \mathcal{D}(\vartheta_{\alpha}) \quad \text{for all} \quad \alpha \geqslant 0 \quad \text{implies} \quad \psi=0.
\end{equation} 
We slightly modify the notation we have been using so far. Let 
\begin{equation}
F_{\alpha}(n) := \alpha |n| \quad \text{and} \quad \psi_{\alpha}(n) := e^{F_{\alpha}(n)}\psi(n) = e^{\alpha|n|}\psi(n), \quad \text{for all} \ n \in \Z.
\end{equation}
\begin{comment}
\begin{theorem}
Suppose that hypotheses 1 and 2 hold for the potential $V$. Suppose that $H \psi = E \psi$ and $\psi_{\alpha} \in \ell^2(\Z)$ for all $\alpha \geqslant 0$. Then $\psi = 0$.
\end{theorem}
\end{comment}

\noindent \textit{Proof of Theorem \ref{HunGun1}, the second part}. \quad The proof is by contradiction, and the strategy is as follows: we assume that $\psi \neq 0$ and define $\Psi_{\alpha} := \psi_{\alpha}/\|\psi_{\alpha}\|$. It is not hard to see that $\Psi_{\alpha}$ converges weakly to zero as $\alpha \to + \infty$ (use the fact that the difference equation $H\psi = E\psi$ implies $\psi(n) \neq 0$ infinitely often). In the first part we apply Proposition \ref{propdodo} with $F_{\alpha}$ replacing $F$. In this case we can exactly compute terms to show that 
\begin{multline}
\label{sleyy}
0 = \cosh(\alpha)^{-1}\langle \Psi_{\alpha}, [V,A']_{\circ} \Psi_{\alpha} \rangle  + 2\tanh(\alpha)\|\sqrt{|N|}(S^*-S)\Psi_{\alpha}\|^2 \\
+ \|\sqrt{\Delta(4-\Delta)} \Psi_{\alpha}\|^2 - \tanh(\alpha) \left( 2\Psi_{\alpha}^2(0) + \left(\Psi_{\alpha}(-1)-\Psi_{\alpha}(1)\right)^2 \right).
\end{multline}
In the second part, we apply Lemma \ref{Goodlemma} again with $F_{\alpha}$ replacing $F$. We show that 
\begin{equation}
\label{sisz}
\lim \limits_{\alpha \to +\infty} \|\sqrt{\Delta(4-\Delta)}\Psi_{\alpha}\|^2 = \lim \limits_{\alpha \to +\infty} \Re \ \langle \Psi_{\alpha}, \Delta(4-\Delta) \Psi_{\alpha} \rangle = 2.
\end{equation}
The conclusion is then imminent: taking the limit $\alpha \to + \infty$ in \eqref{sleyy}, and recalling that $[V,A']_{\circ}$ exists as a bounded operator and $\Psi_{\alpha}$ converges weakly to zero leads to a contradiction.

\noindent \textbf{Part 1.} It follows from \eqref{zinyDDD} and the limiting argument of Proposition \ref{propdodoD} that 
\begin{equation*}
\langle \psi_{\alpha}, [H, A']_{\circ} \psi_{\alpha} \rangle = \langle \psi_{\alpha}, A' [e^F,\Delta] e^{-F} \psi_{\alpha} \rangle + \langle \psi_{\alpha}, e^{-F} [e^F,\Delta] A' \psi_{\alpha} \rangle. 
\end{equation*}
All terms are computed exactly:
\begin{equation}
e^{(\T F_{\alpha}-F_{\alpha})(n)} = \begin{cases}
e^{-\alpha} &\text{if $n \geqslant 1$}\\
e^{\alpha} &\text{if $n \leqslant 0$}
\end{cases} \hspace{1cm} \text{and} \hspace{1cm} 
e^{(\T^* F_{\alpha}-F_{\alpha})(n)}= \begin{cases}
e^{\alpha} &\text{if $n \geqslant 0$}\\
e^{-\alpha} &\text{if $n \leqslant -1$},
\end{cases}
\end{equation}
\begin{equation}
\label{ty8777}
e^{(F_{\alpha}-\T F_{\alpha})(n)} = \begin{cases}
e^{\alpha} &\text{if $n \geqslant 1$}\\
e^{-\alpha} &\text{if $n \leqslant 0$}
\end{cases} \hspace{1cm} \text{and} \hspace{1cm} 
e^{(F_{\alpha}-\T^* F_{\alpha})(n)}= \begin{cases}
e^{-\alpha} &\text{if $n \geqslant 0$}\\
e^{\alpha} &\text{if $n \leqslant -1$}.
\end{cases}
\end{equation}
Let $\mathbf{1}_B$ be the projector onto $B \subset \Z$. Therefore
\begin{align*}
\vr-\vl &= 2\sinh(\alpha) \text{sign}(N) \mathbf{1}_{\{n \neq 0\}}, \ &
\vr+\vl &= 2\left(\cosh(\alpha) - 1 + \sinh(\alpha)\mathbf{1}_{\{n=0\}}\right), \\
\T^*\varphi_{\ell}-\varphi_{\ell} &= -2\sinh(\alpha)\mathbf{1}_{\{n=0\}}, \ &
\varphi_{\ell} - \T\varphi_{\ell} &= -2\sinh(\alpha)\mathbf{1}_{\{n=+1\}}, \\ 
\T \varphi_r - \varphi_r &= -2\sinh(\alpha)\mathbf{1}_{\{n=0\}}, \ & \varphi_r - \T^* \varphi_r &= -2\sinh(\alpha)\mathbf{1}_{\{n=-1\}}, \\ 
\T^*\varphi_{\ell}-{\T^*}^2\varphi_{\ell} &= 2\sinh(\alpha)\mathbf{1}_{\{n=-1\}}, \ &
\T \varphi_r - \T^2 \varphi_r &= 2\sinh(\alpha)\mathbf{1}_{\{n=+1\}}.
\end{align*}
Let $\mathcal{T} := A' [e^F,\Delta] e^{-F} + e^{-F} [e^F,\Delta] A'$. By \eqref{commutatorSD} and \eqref{commutatorS*D}, we have:
\begin{align*} 
\mathcal{T} &= A' (-S e^{F} \vr -S^* e^F \vl ) e^{-F}  + e^{-F} (\vr e^{F} S^* + \vl e^F S ) A' \\
&= - A' (S \vr + S^* \vl) + (\vr S^* + \vl S)A'.
\end{align*}
Plug in $A' = 2^{-1}(S^*+S) +N(S^*-S)$ and simplify to get $\mathcal{T} = \mathcal{T}_1 + \mathcal{T}_2$, where
\begin{equation*}
\mathcal{T}_1 := 2^{-1} \left( -S^2 \vr + 3 \vr (S^*)^2 - (S^*)^2 \vl +3  \vl S^2 \right) - (\vr + \vl), \quad \text{and}
\end{equation*}
\begin{equation*}
\mathcal{T}_2 := N \left( S^2 \vr + \vr (S^*)^2 - (S^*)^2 \vl - \vl S^2 \right) - 2N(\vr - \vl).
\end{equation*}
We calculate $\mathcal{T}_1$:
\begin{align*}
\mathcal{T}_1 &= -2^{-1}\left( \vr + \vl \right) \left(2 - S^2 - (S^*)^2 \right) -2^{-1} \left( S^2 \vr + \vr S^2 + (S^*)^2\vl + \vl (S^*)^2 \right) + \vr (S^*)^2 + \vl S^2 \\
&= -(\cosh(\alpha)-1+\sinh(\alpha)\mathbf{1}_{\{n=0\}})\Delta(4-\Delta) + (\vr-\vl)\left((S^*)^2 - S^2 \right) \\
&\quad  +2^{-1}\left( (\vl - \T^* \vl)(S^*)^2 + (\T^* \vl - \T^{*2} \vl)(S^*)^2 +(\vr - \T \vr)S^2 + (\T \vr -\T^2 \vr)S^2 \right) \\
&= \mathcal{T}_{1;1} + \mathcal{T}_{1;2}
 \end{align*}
 where
\begin{equation}
\label{T11}
\mathcal{T}_{1;1} :=  -(\cosh(\alpha)-1)\Delta(4-\Delta),\quad \text{and}
\end{equation}
\begin{align*}
\mathcal{T}_{1;2} &:=  -\sinh(\alpha)\mathbf{1}_{\{n=0\}}\Delta(4-\Delta) +2\sinh(\alpha)\text{sign}(N)\mathbf{1}_{\{n\neq 0\}}((S^*)^2-S^2) \\
 &\quad +\sinh(\alpha)\left(\mathbf{1}_{\{n=0\}}(S^*)^2+\mathbf{1}_{\{n=-1\}}(S^*)^2+\mathbf{1}_{\{n=0\}}S^2+\mathbf{1}_{\{n=1\}}S^2 \right).
\end{align*}
We calculate $\mathcal{T}_2$: 
\begin{align*}
\mathcal{T}_2 &= -N(\vr -\vl) \left(2 - S^2 - (S^*)^2\right) + N(S^2\vr - \vr S^2) +N(\vl (S^*)^2 - (S^*)^2 \vl) \\
&= -N(\vr -\vl) \Delta (4- \Delta) + N \left(\T^2 \vr - \T \vr + \T \vr - \vr \right) S^2 + N \left(\vl - \T^* \vl + \T^* \vl - \T^{*2} \vl \right) (S^*)^2 \\
&= -2\sinh(\alpha) |N| \Delta(4-\Delta) + 2\sinh(\alpha) N \left( -(\mathbf{1}_{\{n=1\}} +\mathbf{1}_{\{n=0\}})S^2 + (\mathbf{1}_{\{n=0\}}+\mathbf{1}_{\{n=-1\}})(S^*)^2\right) \\
&= -2\sinh(\alpha) |N| \Delta(4-\Delta) - 2\sinh(\alpha) \left( \mathbf{1}_{\{n=1\}} S^2 + \mathbf{1}_{\{n=-1\}}(S^*)^2\right).
\end{align*}
The following commutation formulae hold
\begin{equation}
\label{ei87}
S^*(\mathbf{1}_{\{n\neq 0\}}\sign(N)) = [\mathbf{1}_{\{n\neq 0\}}\sign(N) + \mathbf{1}_{\{n=0\}} + \mathbf{1}_{\{n=-1\}}]S^*,
\end{equation}
\begin{equation}
\label{ei88}
S(\mathbf{1}_{\{n\neq 0\}}\sign(N)) = [\mathbf{1}_{\{n\neq 0\}}\sign(N) - \mathbf{1}_{\{n=0\}} - \mathbf{1}_{\{n=+1\}}]S.
\end{equation}
Using
\begin{equation*}
S |N| = |N| S + \left( \mathbf{1}_{\{n=0\}} - \mathbf{1}_{\{n \neq 0\}}\text{sign}(N) \right) S,
\end{equation*}
\begin{equation*}
S^* |N| = |N| S^* + \left( \mathbf{1}_{\{n=0\}} +\mathbf{1}_{\{n \neq 0\}}\text{sign}(N) \right) S^*,
\end{equation*}
one checks that
\begin{equation}
\label{RelateTOme}
|N| \Delta(4 -\Delta) = (S-S^*) |N|(S^*-S) - \mathbf{1}_{\{n=0\}} \Delta (4-\Delta) - \mathbf{1}_{\{n \neq 0\}}\text{sign}(N) (S^2-(S^*)^2). 
\end{equation}
Therefore $\mathcal{T}_2 = \mathcal{T}_{2;1} + \mathcal{T}_{2;2}$, where 
\begin{equation}
\label{T21}
\mathcal{T}_{2;1} =  -2\sinh(\alpha)(S-S^*) |N|(S^*-S), \quad \text{and}
\end{equation} 
\begin{align*}
\mathcal{T}_{2;2} &= - 2\sinh(\alpha) \left( \mathbf{1}_{\{n=1\}} S^2 + \mathbf{1}_{\{n=-1\}}(S^*)^2\right) \\
& \quad + 2\sinh(\alpha) \left( \mathbf{1}_{\{n=0\}} \Delta (4-\Delta) + \mathbf{1}_{\{n \neq 0\}}\text{sign}(N) (S^2-(S^*)^2)  \right).
\end{align*}
Finally, a calculation shows that
\begin{equation}
\label{Tsum}
\mathcal{T}_{1;2} + \mathcal{T}_{2;2} =  \sinh(\alpha)\left( 2\mathbf{1}_{\{n=0\}} - \mathbf{1}_{\{n=-1\}}(S^*)^2 -\mathbf{1}_{\{n=1\}}S^2 \right).
\end{equation}
Note that 
\begin{equation*}
\langle \psi_{\alpha}, [H, A']_{\circ} \psi_{\alpha} \rangle = \langle \psi_{\alpha}, \mathcal{T} \psi_{\alpha} \rangle = \langle \psi_{\alpha}, \left(\mathcal{T}_{1;1} + \mathcal{T}_{2;1} +  \mathcal{T}_{1;2} + \mathcal{T}_{2;2} \right) \psi_{\alpha} \rangle.
\end{equation*}
Plugging in for $\mathcal{T}_{1;1}$, $\mathcal{T}_{2;1}$ and $\mathcal{T}_{1;2} + \mathcal{T}_{2;2}$ given by \eqref{T11}, \eqref{T21} and \eqref{Tsum} yields 
\begin{align*}
\langle \psi_{\alpha}, [H,A']_{\circ} \psi_{\alpha} \rangle &= - 2\sinh(\alpha)\|\sqrt{|N|}(S^*-S)\psi_{\alpha}\|^2 - (\cosh(\alpha)-1) \langle \psi_{\alpha}, \Delta(4-\Delta) \psi_{\alpha} \rangle \\
&\quad +\sinh(\alpha) \left( 2\psi_{\alpha}^2(0) + \left(\psi_{\alpha}(-1)-\psi_{\alpha}(1)\right)^2 \right).
\end{align*}
Cancelling $\langle \psi_{\alpha}, [\Delta,A'] \psi_{\alpha} \rangle = \langle \psi_{\alpha}, \Delta(4-\Delta) \psi_{\alpha} \rangle$ on both sides and dividing throughout by $\cosh(\alpha)\|\psi_{\alpha}\|^2$ yields \eqref{sleyy} as required.

\noindent \textbf{Part 2.} 
From \eqref{ty8777},
\begin{equation*}
2^{-1}(e^{F_{\alpha}-\T F_{\alpha}} + e^{F_{\alpha}-\T^* F_{\alpha}})  = \begin{cases}
\cosh(\alpha) &\text{if $|n| \geqslant 1$}\\
e^{-\alpha} &\text{if $n = 0$},
\end{cases}
\end{equation*}
\begin{equation*}
2^{-1}(e^{F_{\alpha}-\T F_{\alpha}} - e^{F_{\alpha}-\T^* F_{\alpha}})  = \sinh(\alpha)\mathbf{1}_{\{n \neq 0\}}\sign(N).
\end{equation*}
We apply \eqref{johnnygood} of Lemma \ref{Goodlemma}:
\begin{align*}
H_{F_{\alpha}} &= \cosh(\alpha) \Delta + \mathbf{1}_{\{n=0\}}(e^{-\alpha}-\cosh(\alpha))\Delta + V + 2(1-\cosh(\alpha)) \\ 
&\quad + 2\mathbf{1}_{\{n=0\}}(\cosh(\alpha)-e^{-\alpha}) + \sinh(\alpha)\mathbf{1}_{\{n \neq 0\}}\sign(N)(S^*-S).
\end{align*}
The goal is to square $H_{F_{\alpha}}$. Divide throughout by $\cosh(\alpha)$ and let $c_{\alpha} := (e^{-\alpha}\cosh(\alpha)^{-1}-1)$:
\begin{align}
\label{eis990}
\cosh(\alpha)^{-1}H_{F_{\alpha}} &= \Delta + c_{\alpha} \mathbf{1}_{\{n=0\}}\Delta + \cosh(\alpha)^{-1} V + 2(\cosh(\alpha)^{-1}-1) - 2c_{\alpha}\mathbf{1}_{\{n=0\}} \\ 
 & \quad  + \tanh(\alpha)\mathbf{1}_{\{n \neq 0\}}\sign(N)(S^*-S). \label{sis72}
\end{align}
Note that $\sup_{\alpha \geqslant 0} |c_{\alpha}| \leqslant 2$. Since $(S^*-S)$ is antisymmetric, by \eqref{ei87} and \eqref{ei88}, we see that $\mathbf{1}_{\{n \neq 0\}}\sign(N)(S^*-S)$ is antisymmetric up to a couple of rank one projectors. The same goes for 
$\Delta\mathbf{1}_{\{n \neq 0\}}\sign(N)(S^*-S)$ and $\mathbf{1}_{\{n \neq 0\}}\sign(N)(S^*-S)\Delta$.
Therefore
\begin{equation*}
\lim \limits_{\alpha \to +\infty} \Re \  \langle \Psi_{\alpha}, [\mathbf{1}_{\{n \neq 0\}}\sign(N)(S^*-S)] \Psi_{\alpha} \rangle = 0,
\end{equation*}
\begin{equation*}
\lim \limits_{\alpha \to +\infty} \Re \  \langle \Psi_{\alpha}, \Delta[\mathbf{1}_{\{n \neq 0\}}\sign(N)(S^*-S)] \Psi_{\alpha} \rangle = 0,
\end{equation*}
\begin{equation*}
\lim \limits_{\alpha \to +\infty} \Re \  \langle \Psi_{\alpha}, [\mathbf{1}_{\{n \neq 0\}}\sign(N)(S^*-S)]\Delta \Psi_{\alpha} \rangle = 0.
\end{equation*}
We compute $\big[\tanh(\alpha)\mathbf{1}_{\{n \neq 0\}}\sign(N)(S^*-S)\big]^2$ using \eqref{ei87} and \eqref{ei88}:
\begin{equation*}
\eqref{sis72}^2 = \tanh^2(\alpha)\Big[ \mathbf{1}_{\{n \neq 0\}} (S^2+(S^*)^2-2) + \mathbf{1}_{\{n=-1\}}(1-(S^*)^2) + \mathbf{1}_{\{n=+1\}}({S}^2-1) \big].
\end{equation*}
Thus squaring $\cosh(\alpha)^{-1}H_{F_{\alpha}}$ given by \eqref{eis990}-\eqref{sis72} and recalling that $\Delta(4-\Delta) = 2-S^2-(S^*)^2$ we get
\begin{equation*}
\cosh(\alpha)^{-2} H_{F_{\alpha}}^2 = \Delta(\Delta-4) + 4 -\tanh^2(\alpha)\Delta(4-\Delta) + P_{\alpha},
\end{equation*} 
where $P_{\alpha}$ is a bounded operator satisfying
\begin{equation*}
\lim \limits_{\alpha \to \infty} \Re \ \langle \Psi_{\alpha}, P_{\alpha} \Psi_{\alpha} \rangle = 0.
\end{equation*}
Rearranging and recalling that $H_{F_{\alpha}}\Psi_{\alpha} = E\Psi_{\alpha}$ yields \eqref{sisz} as required.
\qed

\begin{comment}
Thus 
\begin{equation}
(1+\tanh^2(\alpha)) \Delta(4-\Delta) = 4 - \cosh(\alpha)^{-2}H_{F_{\alpha}}^2 + P_{\alpha}.
\end{equation}
\end{comment}
%- - -  - ----------------------------------------------------------------------------------
% ------------------------------------------------------------------------------------------
%  -----------------------------------------------------------------------------------------
%   -----------------------------
%    ---------------------------------------------------------------------------------------
%     --------------------------------------------------------------------------------------
%      -------------------------------------------------------------------------------------

\section{Appendix : Technical calculations}
\label{appendix}

The Appendix is devoted to proving the key relations \eqref{theFirstoneD} and \eqref{theFirstoned1} that appear in Propositions \ref{propdodoD} and \ref{propdodo} respectively. Recall that for $B \subset \Z^d$, $\mathbf{1}_B$ denotes the projector onto $B$. We start with the proof of the multi-dimensional formula
\begin{equation}
\label{theFirstoneDDD}
\begin{aligned}
\big \langle \phi, [e^F A' e^F,\Delta] \phi \big \rangle &= \big \langle \phi_F, [A', \Delta] \phi_F \big \rangle - 2\big\|\sqrt{g}A' \phi_F \big\|^2 \\
& \quad - \sum_{i=1}^d \big\|\sqrt{\Delta_i (4-\Delta_i)} W_i \phi_F \big\|^2 + 2^{-1} \big \langle \phi_F, (\mathcal{L} + \mathcal{M} + \mathcal{G}) \phi_F \big \rangle,
\end{aligned}
\end{equation}
where $\phi \in \ell_0(\Z^d)$ and $\phi_F := e^F\phi$. To jump to the proof of the $1d$ relation, go to \eqref{theFirstone}. \\
\begin{proof}
It is understood that the operators are calculated and the commutators developed against $\phi \in \ell_0(\Z^d)$, so we omit the $\phi$ for ease of notation. Usual commutation relations give
\begin{equation}
\label{zinyDDD}
[e^F A' e^F, \Delta] = e^F[A', \Delta]e^F + e^FA'[e^F,\Delta] + [e^F,\Delta]A' e^F.
\end{equation}
We now concentrate on the second and third terms on the right side of the latter relation. The goal is to pop out $e^F A' g A' e^F$ and control the remainder. As pointed out in \cite{FH} and \cite{CFKS}, this is the key quantity to single out. The following commutators will be used repeatedly:
\begin{equation}
\label{commutatorSD}
[e^F,S_i] = -(\T_i e^F - e^F)S_i = S_i(\T_i^* e^F-e^F) = -e^F\varphi_{\ell_i}S_i = S_i\varphi_{r_i} e^F,
\end{equation}
\begin{equation}
\label{commutatorS*D}
[e^F,S^*_i] = -(\T_i^* e^F - e^F)S_i^* = S_i^*(\T_i e^F - e^F) = -e^F\varphi_{r_i} S_i^* = S_i^* \varphi_{\ell_i}e^F.
\end{equation}
\textbf{Part 1 : Creating $e^F A' g A' e^F$ in a first way.} We have
\begin{align*}
[e^F, \Delta_i] &= \varphi_{r_i} e^F S_i^* + \varphi_{\ell_i} e^F S_i \\
&= g_{r_i}  N_i e^F S_i^* + \varphi_{r_i} \mathbf{1}_{\{n_i=0\}} e^F S_i^* + \varphi_{\ell_i} e^F S_i  \\
&= g_{r_i} N_i e^F (S_i^*-S_i) + \varphi_{r_i} \mathbf{1}_{\{n_i=0\}} e^F (S_i^*-S_i) + \big( \varphi_{r_i} + \varphi_{\ell_i} \big) e^F S_i \\
&= g_{r_i} N_i (S_i^*-S_i)e^F + g_{r_i} N_i [e^F,(S_i^*-S_i)] + \varphi_{r_i} \mathbf{1}_{\{n_i=0\}} e^F (S_i^*-S_i) + \big( \varphi_{r_i} + \varphi_{\ell_i} \big) e^F S_i \\
&= g N_i (S_i^*-S_i)e^F + (g_{r_i}-g) N_i (S_i^*-S_i)e^F + \varphi_{r_i} \mathbf{1}_{\{n_i=0\}} (S_i^*-S_i) e^F \\
&\quad + \varphi_{r_i} [e^F,(S_i^*-S_i)] + \big( \varphi_{r_i} + \varphi_{\ell_i} \big) e^F S_i \\
&= g A_i' e^F - 2^{-1}g (S_i^*+S_i) e^F + (g_{r_i}-g) N_i (S_i^*-S_i)e^F + \varphi_{r_i} \mathbf{1}_{\{n_i=0\}} (S_i^*-S_i) e^F\\
& \quad + \varphi_{r_i} [e^F,(S_i^*-S_i)] + \big( \varphi_{r_i} + \varphi_{\ell_i} \big) e^F S_i.
\end{align*}
\begin{align*}
[e^F, \Delta_i] &= - S_ie^F\varphi_{r_i} - S_i^*e^F \varphi_{\ell_i}  \\
&= -S_i e^F N_i g_{r_i} -S_i e^F \varphi_{r_i} \mathbf{1}_{\{n_i=0\}} - S_i^*e^F\varphi_{\ell_i}  \\
&= (S_i^*-S_i)e^F N_i g_{r_i} + (S_i^*-S_i) e^F \varphi_{r_i} \mathbf{1}_{\{n_i=0\}} - S_i^*e^F\big( \varphi_{r_i} + \varphi_{\ell_i} \big) \\
&= e^F(S_i^*-S_i) N_i g_{r_i} +[(S_i^*-S_i),e^F]N_i g_{r_i} + (S_i^*-S_i) e^F \varphi_{r_i} \mathbf{1}_{\{n_i=0\}} - S_i^*e^F\big( \varphi_{r_i} + \varphi_{\ell_i} \big) \\
&= e^F(S_i^*-S_i) N_i g + e^F(S_i^*-S_i) N_i (g_{r_i}-g) + e^F(S_i^*-S_i)\varphi_{r_i} \mathbf{1}_{\{n_i=0\}} \\
&\quad - [e^F,(S_i^*-S_i)]\varphi_{r_i} - S_i^*e^F\big( \varphi_{r_i} + \varphi_{\ell_i} \big)\\
&= e^F A_i' g + 2^{-1}e^F(S^*_i+S_i)g + e^F(S_i^*-S_i) N_i (g_{r_i}-g) + e^F(S_i^*-S_i)\varphi_{r_i} \mathbf{1}_{\{n_i=0\}} \\
&\quad - [e^F,(S_i^*-S_i)]\varphi_{r_i} - S_i^*e^F\big( \varphi_{r_i} + \varphi_{\ell_i} \big). 
\end{align*}
Therefore we have obtained
\begin{equation}
\label{FirstDD}
e^FA'[e^F,\Delta] + [e^F,\Delta]A' e^F =  2e^F A' g A' e^F + e^F (L_r + M_r + G_r + H_r)e^F, \quad \text{where}
\end{equation}
\begin{equation*}
L_r :=  \sum_{i,j} A_i'(g_{r_j}-g) N_j (S_j^*-S_j) +  (S_i^*-S_i) N_i (g_{r_i}-g) A_j',
\end{equation*}
\begin{equation*}
M_r := 2^{-1} \sum_{i,j} - A_i' g (S_j^*+S_j) +  (S_i^*+S_i) g A_j', 
\end{equation*}
\begin{align*}
G_r &:= \sum_{i,j} A_i'\left(\varphi_{r_j} [e^F,(S_j^*-S_j)]e^{-F} + \big( \varphi_{r_j} + \varphi_{\ell_j} \big) e^F S_j e^{-F}\right) \\
&\quad - \sum_{i,j} \left(e^{-F} [e^F,(S_i^*-S_i)]\varphi_{r_i} + e^{-F}S_i^*e^F\big( \varphi_{r_i} + \varphi_{\ell_i} \big) \right)A_j', \quad \text{and}
\end{align*}
\begin{equation*}
H_r := \sum_{i,j} A_i'\varphi_{r_j} \mathbf{1}_{\{n_j=0\}} (S_j^*-S_j) + (S_i^*-S_i) \varphi_{r_i} \mathbf{1}_{\{n_i=0\}} A_j.    
\end{equation*}
We split $M_r$ as follows:
$M_r = M_{r;1} + M_{r;2}$, where
\begin{equation*}
M_{r;1} := 2^{-1}\sum_{i \neq j} -A_i' g(S_j^*+S_j) +  (S_i^*+S_i) g A_j',   
\end{equation*}
\begin{align*}
M_{r;2} &:= 2^{-1}\sum_i - A_i' g(S_i^*+S_i)  +  (S_i^*+S_i) g A_i' = M_{r;2;1} + M_{r;2;2}, \quad \text{with}
\end{align*}
\begin{equation*}
M_{r;2;1} :=  2^{-1}\sum_i -A_i' g_{r_i} (S_i^*+S_i) + (S_i^*+S_i) g_{r_i} A_i',
\end{equation*}
\begin{equation*}
M_{r;2;2} :=  2^{-1}\sum_i -A_i' (g-g_{r_i}) (S_i^*+S_i)  + (S_i^*+S_i) (g-g_{r_i}) A_i'.  
\end{equation*}
We calculate $M_{r;1}$ by expanding $A_i'$ and $A_j'$:
\begin{align*}
M_{r;1}&:= 2^{-1}\sum_{i \neq j} -N_i(S_i^*-S_i) g(S_j^*+S_j)  +  (S_i^*+S_i) g N_j(S_j^*-S_j) \\
&= 2^{-1}\sum_{i \neq j} -N_i\big[ (\T_i^*g)S_i^*-(\T_ig)S_i\big](S_j^*+S_j) + \big[(\T_i^*(gN_j))S_i^*+(\T_i(gN_j))S_i\big](S^*_j-S_j) \\
&= \frac{1}{2}\sum_{i \neq j} N_i\big[\T_ig-\T_jg\big]S_iS_j + N_i\big[ \T_j^*g-\T_i^*g\big]S_i^*S_j^* + \big[N_i(\T_jg-\T_i^*g)+N_j(\T_jg-\T_i^*g)\big]S_i^*S_j. ^{(\ddagger_2)}
\end{align*}
Again expanding $A_i'$:
\begin{align*}
M_{r;2;1} &= 2^{-1}\sum_i (S_i^*+S_i)g_{r_i}(S_i^*+S_i) - (S_i^*-S_i)\varphi_{r_i}\mathbf{1}_{\{n_i \neq 0\}}(S_i^*+S_i) \\
&\quad +2^{-1}\sum_i (S_i^*+S_i)\varphi_{r_i}\mathbf{1}_{\{n_i \neq 0\}}(S_i^*-S_i) \\
&= \sum_i 2^{-1}(S_i^*+S_i)g_{r_i}(S_i^*+S_i) 
+ S_i\varphi_{r_i} S_i^* - S_i^*\varphi_{r_i} S_i + \left(S_i^*\varphi_{r_i} \mathbf{1}_{\{n_i = 0\}} S_i - S_i\varphi_{r_i} \mathbf{1}_{\{n_i = 0\}} S_i^*\right) \\
&= M_{r;2;1;1}+ M_{r;2;1;2}, \quad \text{where}
\end{align*}
\begin{equation*}
M_{r;2;1;1}:= \sum_i 2^{-1}(S_i^*+S_i)g_{r_i}(S_i^*+S_i) + (\T_i\varphi_{r_i} - \T_i^*\varphi_{r_i}),^{(\ddagger_1,\ddagger_3)}
\end{equation*}
\begin{equation*}
M_{r;2;1;2} := \sum_i (\T_i^*\varphi_{r_i})\mathbf{1}_{\{n_i=-1\}}-(\T_i\varphi_{r_i})\mathbf{1}_{\{n_i=+1\}}.  
\end{equation*}
We calulate $G_r$. We note that 
\begin{equation}
\label{glgr1}
(\T_i \varphi_{r_i}) \varphi_{\ell_i} = \varphi_{\ell_i} (\T_i \varphi_{r_i}) = -(\T_i \varphi_{r_i} + \varphi_{\ell_i}), \ \ \ \text{and} \ \ \ (\T_i^* \varphi_{\ell_i}) \varphi_{r_i} = \varphi_{r_i} (\T_i^* \varphi_{\ell_i}) = -(\T_i^* \varphi_{\ell_i} + \varphi_{r_i}).
\end{equation}
\begin{align*}
G_r &:=  \sum_{i,j} A_i' \left(\varphi_{r_j} \big( S_j^* \varphi_{\ell_j} - S_j \varphi_{r_j} \big) + \big( \varphi_{r_j} + \varphi_{\ell_j} \big) S_j \big(\varphi_{r_j} + 1 \big) \right) \\
&\quad - \sum_{i,j} \left(\big( -\varphi_{r_i}S_i^* + \varphi_{\ell_i} S_i \big)\varphi_{r_i} + \big(\varphi_{r_i}+1\big) S_i^* \big( \varphi_{r_i} + \varphi_{\ell_i} \big) \right) A_j' \\
&= \sum_{i,j} A_i' \left(S_j^* (\T_j \varphi_{r_j})\varphi_{\ell_j} - S_j (\T_j^*\varphi_{r_j})\varphi_{r_j} + S_j (\T_j^*\varphi_{r_j}) (\varphi_{r_j}+1) + S_j (\T_j^*\varphi_{\ell_j})(\varphi_{r_j}+1) \right) \\
&\quad + \sum_{i,j} \left(\varphi_{r_i}(\T_i^*\varphi_{r_i})S_i^* - \varphi_{\ell_i}(\T_i \varphi_{r_i})S_i   - (\varphi_{r_i}+1)(\T_i^* \varphi_{r_i})S_i^* - (\varphi_{r_i}+1)(\T_i^*\varphi_{\ell_i})S_i^* \right) A_j' \\
&= \sum_{i,j} A_i'\left(- S_j^* (\varphi_{\ell_j}+\T_j \varphi_{r_j}) + S_j(\T_j^* \varphi_{r_j}-\varphi_{r_j}) \right) + \sum_{i,j} \left( (\varphi_{r_i}-\T_i^*\varphi_{r_i})S_i^* + (\varphi_{\ell_i}+\T_i \varphi_{r_i})S_i \right) A_j' \\
&= G_{r;1} + G_{r;2}, \quad \text{where}
\end{align*}
\begin{equation*}
G_{r;1} := \sum_{i,j} A_i' S_j(\T_j^* \varphi_{r_j}-\varphi_{r_j}) +  (\varphi_{r_i}-\T_i^*\varphi_{r_i})S_i^* A_j',^{(\ddagger_3)}  
\end{equation*}
\begin{equation*}
\label{GR2}
G_{r;2} := \sum_{i,j} -A_i' S_j^* (\varphi_{\ell_j}+\T_j \varphi_{r_j}) + (\varphi_{\ell_i}+\T_i \varphi_{r_i})S_i A_j'.
\end{equation*}
To end this section we note that we are left to deal with $L_r + M_{r;2;1;2} + M_{r;2;2} + G_{r;2} + H_r$.\\
\textbf{Part 2 : Creating $e^F A' g A' e^F$ a second way.} We repeat the calculation with a variation.
\begin{align*}
[e^F, \Delta_i] &= \varphi_{\ell_i} e^F S_i + \varphi_{r_i} e^F S_i^* \\
&= g_{\ell_i}  N_i e^F S_i + \varphi_{\ell_i} \mathbf{1}_{\{n_i=0\}} e^F S_i + \varphi_{r_i} e^F S_i^*  \\
&= -g_{\ell_i} N_i e^F (S_i^*-S_i) + \varphi_{\ell_i} \mathbf{1}_{\{n_i=0\}} e^F (S_i-S_i^*) + \big( \varphi_{r_i} + \varphi_{\ell_i} \big) e^F S_i^* \\
&= -g_{\ell_i} N_i (S_i^*-S_i)e^F - g_{\ell_i} N_i [e^F,(S_i^*-S_i)] + \varphi_{\ell_i} \mathbf{1}_{\{n_i=0\}} e^F (S_i-S_i^*) + \big( \varphi_{r_i} + \varphi_{\ell_i} \big) e^F S_i^* \\
&= g N_i (S_i^*-S_i)e^F - (g_{\ell_i}+g) N_i (S_i^*-S_i)e^F + \varphi_{\ell_i} \mathbf{1}_{\{n_i=0\}} (S_i-S_i^*) e^F \\
&\quad - \varphi_{\ell_i} [e^F,(S_i^*-S_i)] + \big( \varphi_{r_i} + \varphi_{\ell_i} \big) e^F S_i^* \\
&= g A_i' e^F - 2^{-1}g (S_i^*+S_i) e^F - (g_{\ell_i}+g) N_i (S_i^*-S_i)e^F + \varphi_{\ell_i} \mathbf{1}_{\{n_i=0\}} (S_i-S_i^*) e^F \\
&\quad - \varphi_{\ell_i} [e^F,(S_i^*-S_i)] + \big( \varphi_{r_i} + \varphi_{\ell_i} \big) e^F S_i^*. 
\end{align*}
\begin{align*}
[e^F, \Delta_i] &= - S_i^*e^F\varphi_{\ell_i} - S_ie^F \varphi_{r_i}  \\
&= -S_i^* e^F N_i g_{\ell_i} -S_i^* e^F \varphi_{\ell_i} \mathbf{1}_{\{n_i=0\}} - S_ie^F\varphi_{r_i}  \\
&= -(S_i^*-S_i)e^F N_i g_{\ell_i} + (S_i-S_i^*) e^F \varphi_{\ell_i} \mathbf{1}_{\{n_i=0\}} - S_ie^F\big( \varphi_{r_i} + \varphi_{\ell_i} \big) \\
&= -e^F(S_i^*-S_i) N_i g_{\ell_i} -[(S_i^*-S_i),e^F]N_i g_{\ell_i} + (S_i-S_i^*) e^F \varphi_{\ell_i} \mathbf{1}_{\{n_i=0\}} - S_ie^F\big( \varphi_{r_i} + \varphi_{\ell_i} \big) \\
&= e^F(S_i^*-S_i) N_i g - e^F(S_i^*-S_i) N_i (g_{\ell_i}+g) + e^F(S_i-S_i^*)\varphi_{\ell_i} \mathbf{1}_{\{n_i=0\}} \\
&\quad + [e^F,(S_i^*-S_i)]\varphi_{\ell_i} - S_ie^F\big( \varphi_{r_i} + \varphi_{\ell_i} \big)\\
&= e^F A_i' g + 2^{-1}e^F(S^*_i+S_i)g  - e^F(S_i^*-S_i) N_i (g_{\ell_i}+g) + e^F(S_i-S_i^*)\varphi_{\ell_i} \mathbf{1}_{\{n_i=0\}} \\
&\quad + [e^F,(S_i^*-S_i)]\varphi_{\ell_i} - S_ie^F\big( \varphi_{r_i} + \varphi_{\ell_i} \big)
\end{align*}
Therefore we have obtained
\begin{equation}
\label{SecondDD}
e^F A'[e^F,\Delta] + [e^F,\Delta]A' e^F =  2e^F A' g A' e^F + e^F (L_{\ell} + M_{\ell} + G_{\ell}+H_{\ell})e^F, \quad \text{where}
\end{equation}
\begin{equation*}
L_{\ell} := -\sum_{i,j} A_i'(g_{\ell_j}+g) N_j (S_j^*-S_j) + (S_i^*-S_i) N_i (g_{\ell_i}+g) A_j',
\end{equation*}
\begin{equation*}
M_{\ell} := 2^{-1}\sum_{i,j} - A_i' g (S_j^*+S_j)  +  (S_i^*+S_i) g A_j',
\end{equation*}
\begin{align*}
G_{\ell} &:= \sum_{i,j} A_i'\left(-\varphi_{\ell_j} [e^F,(S_j^*-S_j)]e^{-F} + \big( \varphi_{r_j} + \varphi_{\ell_j} \big) e^F S_j^* e^{-F} \right) \\
&\quad +\sum_{i,j} \left( e^{-F} [e^F,(S_i^*-S_i)]\varphi_{\ell_i} - e^{-F}S_ie^F\big( \varphi_{r_i} + \varphi_{\ell_i} \big) \right) A_j', \quad \text{and}
\end{align*}
\begin{equation*}
H_{\ell} := \sum_{i,j} A_i'\varphi_{\ell_j} \mathbf{1}_{\{n_j=0\}} (S_j-S_j^*) + (S_i-S_i^*) \varphi_{\ell_i} \mathbf{1}_{\{n_i=0\}} A_j'.  \end{equation*}
We split $M_{\ell}$ as follows: $M_{\ell} := M_{\ell;1}+M_{\ell;2}$, where 
\begin{equation*}
M_{\ell;1} := 2^{-1}\sum_{i \neq j} - A_i' g (S_j^*+S_j)  +  (S_i^*+S_i) g A_j',
\end{equation*}
\begin{equation*}
M_{\ell;2} := 2^{-1}\sum_i - A_i' g (S_i^*+S_i)  +  (S_i^*+S_i) g A_i' = M_{\ell;2;1} + M_{\ell;2;2}, \quad \text{with}
\end{equation*}
\begin{equation*}
M_{\ell;2;1} := 2^{-1}\sum_i  A_i' g_{\ell_i} (S_i^*+S_i) -  (S_i^*+S_i) g_{\ell_i} A_i',
\end{equation*}
\begin{equation*}
M_{\ell;2;2} := 2^{-1}\sum_i  -A_i' (g+g_{\ell_i}) (S_i^*+S_i) +  (S_i^*+S_i) (g+g_{\ell_i}) A_i'.
\end{equation*}
We calculate $M_{\ell;1}$ by expanding $A_i'$ and $A_j'$:
\begin{align*}
M_{\ell;1} &= 2^{-1}\sum_{i \neq j} -N_i(S_i^*-S_i) g(S_j^*+S_j)  +  (S_i^*+S_i) g N_j(S_j^*-S_j) \\
&= 2^{-1}\sum_{i \neq j} -N_i\big[ (\T_i^*g)S_i^*-(\T_ig)S_i\big](S_j^*+S_j) + \big[(\T_i^*(gN_j))S_i^*+(\T_i(gN_j))S_i\big](S^*_j-S_j) \\
&= \frac{1}{2}\sum_{i \neq j} N_i\big[\T_ig-\T_jg\big]S_iS_j + N_i\big[ \T_j^*g-\T_i^*g\big]S_i^*S_j^* + \big[N_i(\T_jg-\T_i^*g)+N_j(\T_jg-\T_i^*g)\big]S_i^*S_j.^{(\ddagger_2)}
\end{align*}
Again expanding $A_i'$:
\begin{align*}
M_{\ell;2;1} &= 2^{-1}\sum_i -(S_i^*+S_i)g_{\ell_i}(S_i^*+S_i) 
+(S_i^*-S_i)\varphi_{\ell_i}\mathbf{1}_{\{n_i \neq 0\}}(S_i^*+S_i) \\
& \quad - 2^{-1}\sum_i (S_i^*+S_i)\varphi_{\ell_i}\mathbf{1}_{\{n_i \neq 0\}}(S_i^*-S_i) \\
&= \sum_i -2^{-1}(S_i^*+S_i)g_{\ell_i}(S_i^*+S_i) + S_i^*\varphi_{\ell_i} S_i - S_i\varphi_{\ell_i} S_i^*   + \left(S_i\varphi_{\ell_i} \mathbf{1}_{\{n_i = 0\}} S_i^* - S_i^*\varphi_{\ell_i} \mathbf{1}_{\{n_i = 0\}} S_i\right) \\
&= M_{\ell;2;1;1}+M_{\ell;2;1;2}, \quad \text{where}
\end{align*}
\begin{equation*}
M_{\ell;2;1;1} := \sum_i -2^{-1}(S_i^*+S_i)g_{\ell_i}(S_i^*+S_i) + (\T_i^*\varphi_{\ell_i} - \T_i\varphi_{\ell_i}),^{(\ddagger_1,\ddagger_3)}
\end{equation*}
\begin{equation*}
M_{\ell;2;1;2} := \sum_i (\T_i\varphi_{\ell_i})\mathbf{1}_{\{n_i=+1\}}-(\T_i^*\varphi_{\ell_i})\mathbf{1}_{\{n_i=-1\}}.
\end{equation*}
We calculate $G_{\ell}$: 
\begin{align*}
G_{\ell} &:= \sum_{i,j} A_i' \left(-\varphi_{\ell_j} \big( S_j^* \varphi_{\ell_j} - S_j \varphi_{r_j} \big) + \big( \varphi_{r_j} + \varphi_{\ell_j} \big) S_j^* \big(\varphi_{\ell_j} + 1 \big) \right) \\
&\quad + \sum_{i,j} \left(\big( -\varphi_{r_i}S_i^* + \varphi_{\ell_i} S_i \big)\varphi_{\ell_i} - \big(\varphi_{\ell_i}+1\big) S_i \big( \varphi_{r_i} + \varphi_{\ell_i} \big) \right) A_j' \\
&= \sum_{i,j} A_i' \left(-S_j^* (\T_j \varphi_{\ell_j})\varphi_{\ell_j} + S_j (\T_j^*\varphi_{\ell_j})\varphi_{r_j} + S_j^* (\T_j\varphi_{r_j}) (\varphi_{\ell_j}+1) + S_j^* (\T_j\varphi_{\ell_j})(\varphi_{\ell_j}+1) \right) \\
&\quad +\sum_{i,j} \left(-\varphi_{r_i}(\T_i^*\varphi_{\ell_i})S_i^* + \varphi_{\ell_i}(\T_i \varphi_{\ell_i})S_i   - (\varphi_{\ell_i}+1)(\T_i \varphi_{r_i})S_i - (\varphi_{\ell_i}+1)(\T_i\varphi_{\ell_i})S_i \right) A_j' \\
&= \sum_{i,j} A_i' \left( S_j^* (\T_j\varphi_{\ell_j}- \varphi_{\ell_j}) - S_j(\T_j^* \varphi_{\ell_j}+\varphi_{r_j}) \right) + \sum_{i,j}\left( (\varphi_{\ell_i}-\T_i\varphi_{\ell_i})S_i + (\T_i^*\varphi_{\ell_i}+ \varphi_{r_i})S_i^* \right) A_j' \\
&= G_{\ell;1} + G_{\ell;2}, \quad \text{where}
\end{align*}
\begin{equation*}
G_{\ell;1} := \sum_{i,j} A_i' S_j^* (\T_j\varphi_{\ell_j}- \varphi_{\ell_j}) +  (\varphi_{\ell_i}-\T_i\varphi_{\ell_i})S_i A_j',^{(\ddagger_3)}
\end{equation*}
\begin{equation*}
\label{GL2}
G_{\ell;2} := \sum_{i,j}  -A_i' S_j(\T_j^* \varphi_{\ell_j}+\varphi_{r_j})  +  (\T_i^*\varphi_{\ell_i}+ \varphi_{r_i})S_i^*A_j'.
\end{equation*}
Note that we are left to deal with $L_{\ell} + M_{\ell;2;1;2} + M_{\ell;2;2} + G_{\ell;2} + H_{\ell}$. \\
\textbf{Part 3 : Adding the terms of Parts 1 and 2.} Take the average of \eqref{FirstDD} and \eqref{SecondDD}:
\begin{equation}
\begin{aligned}
\label{SunHunBun}
[e^F A' e^F,\Delta] &= e^F[A',\Delta]e^F + 2e^F A' g A' e^F \\
& \quad + 2^{-1}e^F\left(L_r + L_{\ell} + M_r+M_{\ell}+G_r+G_{\ell} + H_r + H_{\ell} \right) e^F.
\end{aligned}
\end{equation}
Applying $\phi \in \ell_0(\Z^d)$ to this equation and taking inner products leads to \eqref{theFirstoneDDD}. We go into details. The terms that still have to be dealt with are $L_r + M_{r;2;1;2} + M_{r;2;2} + G_{r;2} + H_r$ from the first part and $L_{\ell} + M_{\ell;2;1;2} + M_{\ell;2;2} + G_{\ell;2} + H_{\ell}$ from the second part. Since
\begin{equation*}
(\T_i^*\varphi_{r_i}-\T^*_i \varphi_{\ell_i})\mathbf{1}_{\{n_i=-1\}}\phi = (\T_i\varphi_{\ell_i}-\T_i\varphi_{r_i})\mathbf{1}_{\{n_i=+1\}} \phi = 0, \quad \text{and} \quad (\varphi_{r_i}-\varphi_{\ell_i}) \mathbf{1}_{\{n_i=0\}} \phi = 0,
\end{equation*}
it follows that 
\begin{equation*}
(M_{r;2;1;2} + M_{\ell;2;1;2})\phi = 0, \quad \text{and} \quad
(H_r + H_{\ell})\phi = 0.
\end{equation*}
We add $L_r$ and $L_{\ell}$ and define this to be $\mathcal{L}$:
\begin{equation}
\begin{aligned}
\label{LL}
\mathcal{L} := L_r + L_{\ell} &= \sum_{i,j} A_i'[(g_{r_j}-g)-(g_{\ell_j}+g)] N_j (S_j^*-S_j) \\
&\quad + \sum_{i,j}(S_i^*-S_i) N_i [(g_{r_i}-g)-(g_{\ell_i}+g)] A_j'.^{(\ddagger_4)}
\end{aligned}
\end{equation}
We add $M_{r;2;2}$ and $M_{\ell;2;2}$:
\begin{equation*}
M_{r;2;2} + M_{\ell;2;2} = 2^{-1}\sum_i A_i' [(g_{r_i}-g)-(g_{\ell_i}+g)] (S_i^*+S_i)  - (S_i^*+S_i) [(g_{r_i}-g)-(g_{\ell_i}+g)] A_i'.^{(\ddagger_4)} 
\end{equation*}
We can now define $\mathcal{M}$: 
\begin{equation}
\label{MM}
\mathcal{M} := M_r + M_{\ell} = M_{r;1} + M_{r;2;1;1} + M_{\ell;1} + M_{\ell;2;1;1} + (M_{r;2;2} + M_{\ell;2;2}). 
\end{equation}
The final step is to add $G_{r;2}$ and $G_{\ell;2}$:
\begin{align*}
G_{r;2}+G_{\ell;2} &= \sum_{i,j} -A_iS_j^*(\varphi_{\ell_j}+\T_j\varphi_{r_j}) + (\varphi_{\ell_i}+\T_i\varphi_{r_i})S_iA_j \\
& \quad +\sum_{i,j} - A_iS_j(\T_j^*\varphi_{\ell_j}+\varphi_{r_j})+(\T_i^*\varphi_{\ell_i}+\varphi_{r_i})S^*_iA_j  \\
&= - \sum_{i,j} [2^{-1}(S_i^*+S_i)+N_i(S_i^*-S_i)]S_j^*(\varphi_{\ell_j}+\T_j\varphi_{r_j}) \\
&\quad + \sum_{i,j} (\varphi_{\ell_i}+\T_i\varphi_{r_i})S_i[-2^{-1}(S_j^*+S_j)+(S_j^*-S_j)N_j] \\
& \quad - \sum_{i,j} [2^{-1}(S_i^*+S_i)+N_i(S_i^*-S_i)] S_j(\T_j^*\varphi_{\ell_j}+\varphi_{r_j}) \\
& \quad + \sum_{i,j} (\T_i^*\varphi_{\ell_i}+\varphi_{r_i})S^*_i[-2^{-1}(S_j^*+S_j)+(S_j^*-S_j)N_j] \\
&= G_1+G_2+G_3+G_4+G_5+G_6, \quad \text{where}
\end{align*}
\begin{equation*}
G_1 := \sum_{i,j} -N_i S_i^*S_j^*(\varphi_{\ell_j}+\T_j\varphi_{r_j}) + (\T_i^*\varphi_{\ell_i}+\varphi_{r_i}) S_i^* S_j^* N_j,
\end{equation*}
\begin{equation*}
G_2 := \sum_{i,j} N_iS_iS_j (\T_j^*\varphi_{\ell_j}+\varphi_{r_j}) - (\varphi_{\ell_i}+\T_i\varphi_{r_i})S_iS_jN_j,
\end{equation*}
\begin{equation*}
G_3 := \sum_{i,j} N_iS_iS_j^* (\varphi_{\ell_j}+\T_j\varphi_{r_j}) - N_iS_i^*S_j (\T^*_j\varphi_{\ell_j}+\varphi_{r_j})  - (\T^*_i \varphi_{\ell_i} + \varphi_{r_i})S_i^*S_j N_j + (\varphi_{\ell_i}+\T_i\varphi_{r_i}) S_i S_j^* N_j,
\end{equation*}
\begin{equation*}
G_4 := -2^{-1} \sum_{i,j}  S_i^*S_j^*(\varphi_{\ell_j}+\T_j\varphi_{r_j}) + (\T^*_i\varphi_{\ell_i}+\varphi_{r_i})S_i^*S_j^*,
\end{equation*}
\begin{equation*}
G_5 := - 2^{-1} \sum_{i,j} (\varphi_{\ell_i}+\T_i\varphi_{r_i})S_iS_j + S_iS_j(\T^*_j\varphi_{\ell_j}+\varphi_{r_j}), 
\end{equation*}
\begin{equation*}
G_6 := - 2^{-1} \sum_{i,j} S_iS_j^*(\varphi_{\ell_j}+\T_j\varphi_{r_j}) + (\varphi_{\ell_i}+\T_i\varphi_{r_i})S_iS_j^*  + S_i^*S_j(\T^*_j\varphi_{\ell_j}+\varphi_{r_j}) + (\T^*_i\varphi_{\ell_i}+\varphi_{r_i})S_i^*S_j. 
\end{equation*}
We calculate $G_i$ for $i=1...6$. $G_1 = G_{1;1} + G_{1;2} +G_{1;3}$, with
\begin{equation*}
G_{1;1} := \sum_{i,j} [(\T_j^*\varphi_{\ell_j}-\T_i^*\T_j^*\varphi_{\ell_j})+(\varphi_{r_j}-\T_i^*\varphi_{r_j})]N_iS_i^*S_j^*,^{(\ddagger_3)}  
\end{equation*}
\begin{equation*}
G_{1;2} := \sum_{i \neq j} (\T_i^*\varphi_{\ell_i}+\varphi_{r_i}) S_i^* S_j^* \quad \text{and} \quad
G_{1;3} := 2 \sum_i (\T_i^*\varphi_{\ell_i}+\varphi_{r_i}) (S_i^*)^2.
\end{equation*}
\begin{equation*}
G_2 = G_{2;1} + G_{2;2} + G_{2;3}, \quad \text{where}
\end{equation*}
\begin{equation*}
G_{2;1} := \sum_{i,j} [(\T_i \varphi_{\ell_j}- \varphi_{\ell_j})+(\T_i \T_j \varphi_{r_j}-\T_j \varphi_{r_j})]N_iS_iS_j,^{(\ddagger_3)}     
\end{equation*}
\begin{equation*}
G_{2;2} := \sum_{i \neq j} (\varphi_{\ell_i}+\T_i\varphi_{r_i})S_iS_j \quad \text{and} \quad
G_{2;3} := 2\sum_i (\varphi_{\ell_i}+\T_i\varphi_{r_i})(S_i)^2.
\end{equation*}
\begin{align*}
G_3 &= \sum_{i \neq j} N_iS_iS_j^* (\varphi_{\ell_j}+\T_j\varphi_{r_j}) - N_iS_i^*S_j (\T^*_j\varphi_{\ell_j}+\varphi_{r_j}) - (\T^*_i \varphi_{\ell_i} + \varphi_{r_i})S_i^*S_j N_j + (\varphi_{\ell_i}+\T_i\varphi_{r_i}) S_i S_j^* N_j \\
&\quad + \sum_i N_i (\varphi_{\ell_i}+\T_i\varphi_{r_i}) - N_i (\T^*_i\varphi_{\ell_i}+\varphi_{r_i}) - (\T^*_i \varphi_{\ell_i} + \varphi_{r_i}) N_i + (\varphi_{\ell_i}+\T_i\varphi_{r_i}) N_i \\
&= \sum_{i \neq j} (\T_i \T_j^* \varphi_{\ell_j}+\T_i\varphi_{r_j})N_iS_iS_j^* - (\T^*_i\varphi_{\ell_j}+\T_i^*\T_j\varphi_{r_j})N_iS_i^*S_j \\
& \quad + \sum_{i \neq j} - (\T^*_i \varphi_{\ell_i} + \varphi_{r_i})N_j S_i^*S_j + (\varphi_{\ell_i}+\T_i\varphi_{r_i}) N_j S_i S_j^* \\
&\quad +\sum_{i \neq j} (\T^*_i \varphi_{\ell_i} + \varphi_{r_i}) S_i^*S_j + (\varphi_{\ell_i}+\T_i\varphi_{r_i})S_i S_j^* + 2\sum_i  [(\varphi_{\ell_i}-\T^*_i \varphi_{\ell_i})+(\T_i\varphi_{r_i}-\varphi_{r_i})]N_i\\
&= G_{3;1}+G_{3;2}+G_{3;3}, \quad \text{where}
\end{align*}
\begin{equation*}
G_{3;1} := \sum_{i \neq j} [(\T_i \T_j^* \varphi_{\ell_j}-\T_j^* \varphi_{\ell_j})+(\T_i\varphi_{r_j}-\varphi_{r_j})]N_iS_iS_j^* +[(\varphi_{\ell_j}-\T^*_i\varphi_{\ell_j})+(\T_j\varphi_{r_j}-\T_i^*\T_j\varphi_{r_j})]N_iS_i^*S_j,^{(\ddagger_3)}    
\end{equation*}
\begin{equation*}
G_{3;2} := \sum_{i \neq j} (\T^*_i \varphi_{\ell_i} + \varphi_{r_i}) S_i^*S_j + (\varphi_{\ell_i}+\T_i\varphi_{r_i})S_i S_j^*,   
\end{equation*}
\begin{equation*}
G_{3;3} :=  2\sum_i  [(\varphi_{\ell_i}-\T^*_i \varphi_{\ell_i})+(\T_i\varphi_{r_i}-\varphi_{r_i})]N_i.^{(\ddagger_3)}     
\end{equation*}
\begin{equation*}
G_4 = -2^{-1} \sum_{i,j} [(\T^*_i\T^*_j\varphi_{\ell_j}+\T_i^*\varphi_{r_j})+(\T^*_i\varphi_{\ell_i}+\varphi_{r_i})]S^*_iS^*_j = G_{4;1} + G_{4;2}, \quad \text{with}
\end{equation*}
\begin{equation*}
G_{4;1} := -2^{-1} \sum_{i \neq j} [(\T^*_i\T^*_j\varphi_{\ell_j}+\T_i^*\varphi_{r_j})+(\T^*_i\varphi_{\ell_i}+\varphi_{r_i})]S^*_iS^*_j,  
\end{equation*}
\begin{equation*}
G_{4;2} := -2^{-1} \sum_i [(\T^*_i\T^*_i\varphi_{\ell_i}+\T_i^*\varphi_{r_i})+(\T^*_i\varphi_{\ell_i}+\varphi_{r_i})](S^*_i)^2.  
\end{equation*}
\begin{equation*}
G_5 = -2^{-1} \sum_{i,j} [(\varphi_{\ell_i}+\T_i\varphi_{r_i})+(\T_i\varphi_{\ell_j}+\T_i\T_j\varphi_{r_j})]S_iS_j = G_{5;1} + G_{5;2}, \quad \text{with}
\end{equation*}
\begin{equation*}
G_{5;1}= -2^{-1} \sum_{i \neq j} [(\varphi_{\ell_i}+\T_i\varphi_{r_i})+(\T_i\varphi_{\ell_j}+\T_i\T_j\varphi_{r_j})]S_iS_j,
\end{equation*}
\begin{equation*}
G_{5;2}= -2^{-1} \sum_i [(\varphi_{\ell_i}+\T_i\varphi_{r_i})+(\T_i\varphi_{\ell_i}+\T_i\T_i\varphi_{r_i})](S_i)^2.
\end{equation*}
\begin{align*}
G_6 &= -2^{-1} \sum_{i,j} [(\T_i\T_j^*\varphi_{\ell_j}+\T_i\varphi_{r_j})+(\varphi_{\ell_i}+\T_i\varphi_{r_i})]S_iS_j^* + [(\T^*_i\varphi_{\ell_j}+\T_i^*\T_j\varphi_{r_j})+(\T^*_i\varphi_{\ell_i}+\varphi_{r_i})]S_i^*S_j \\
&= G_{6;1} +G_{6;2}, \quad \text{with} 
\end{align*}
\begin{equation*}
G_{6;1} := -2^{-1} \sum_{i \neq j} [(\T_i\T_j^*\varphi_{\ell_j}+\T_i\varphi_{r_j})+(\varphi_{\ell_i}+\T_i\varphi_{r_i})]S_iS_j^* + [(\T^*_i\varphi_{\ell_j}+\T_i^*\T_j\varphi_{r_j})+(\T^*_i\varphi_{\ell_i}+\varphi_{r_i})]S_i^*S_j,   
\end{equation*}
\begin{equation*}
G_{6;2} := - \sum_i (\varphi_{\ell_i}+\T_i\varphi_{r_i}) + (\T^*_i\varphi_{\ell_i}+\varphi_{r_i}).   
\end{equation*}
We add $G_{1;2}$ and $G_{4;1}$:
\begin{align*}
G_{1;2}+G_{4;1} &= \sum_{i\neq j} (\T_i^*\varphi_{\ell_i}+\varphi_{r_i})S^*_iS^*_j -2^{-1} \sum_{i\neq j} [(\T^*_i\T^*_j\varphi_{\ell_j}+\T_i^*\varphi_{r_j})+(\T^*_i\varphi_{\ell_i}+\varphi_{r_i})]S^*_iS^*_j  \\
&=2^{-1} \sum_{i\neq j} [(\T^*_j\varphi_{\ell_j}-\T^*_i\T^*_j\varphi_{\ell_j})+(\varphi_{r_j}-\T_i^*\varphi_{r_j})]S^*_iS^*_j.^{(\ddagger_3)}  
\end{align*}
We add $G_{1;3}$ and $G_{4;2}$:
\begin{align*}
G_{1;3}+G_{4;2}&= 2\sum_i (\T_i^*\varphi_{\ell_i}+\varphi_{r_i})(S^*_i)^2-2^{-1} \sum_i [(\T^*_i\T^*_i\varphi_{\ell_i}+\T_i^*\varphi_{r_i})+(\T^*_i\varphi_{\ell_i}+\varphi_{r_i})](S^*_i)^2  \\
&= G_7 + G_8, \quad \text{where}
\end{align*}
\begin{equation*}
G_7 := 2^{-1} \sum_i [(\T^*_i\varphi_{\ell_i}-\T^*_i\T^*_i\varphi_{\ell_i})+(\varphi_{r_i}-\T_i^*\varphi_{r_i})](S^*_i)^2 \ ^{(\ddagger_3)}   \quad \text{and} \quad
G_8 := \sum_i (\T_i^*\varphi_{\ell_i}+\varphi_{r_i})(S^*_i)^2.  
\end{equation*}
We add $G_{2;2}$ and $G_{5;1}$:
\begin{align*}
G_{2;2}+G_{5;1} &= \sum_{i \neq j} (\varphi_{\ell_i}+\T_i\varphi_{r_i})S_iS_j -2^{-1} \sum_{i \neq j} [(\varphi_{\ell_i}+\T_i\varphi_{r_i})+(\T_i\varphi_{\ell_j}+\T_i\T_j\varphi_{r_j})]S_iS_j \\
&= 2^{-1} \sum_{i \neq j} [(\varphi_{\ell_j}-\T_i\varphi_{\ell_j})+(\T_j\varphi_{r_j}-\T_i\T_j\varphi_{r_j})]S_iS_j.^{(\ddagger_3)}  
\end{align*}
We add $G_{2;3}$ and $G_{5;2}$:
\begin{align*}
G_{2;3}+G_{5;2}&= 2\sum_i (\varphi_{\ell_i}+\T_i\varphi_{r_i})(S_i)^2 - 2^{-1} \sum_i [(\varphi_{\ell_i}+\T_i\varphi_{r_i})+(\T_i\varphi_{\ell_i}+\T_i\T_i\varphi_{r_i})](S_i)^2  \\
&= G_9 + G_{10}, \quad \text{where}
\end{align*}
\begin{equation*}
G_9 := 2^{-1} \sum_i [(\varphi_{\ell_i}-\T_i\varphi_{\ell_i})+(\T_i\varphi_{r_i}-\T_i\T_i\varphi_{r_i})](S_i)^2 \ ^{(\ddagger_3)}   \quad \text{and} \quad
G_{10} := \sum_i (\varphi_{\ell_i}+\T_i\varphi_{r_i})(S_i)^2.   
\end{equation*}
We add $G_{3;2}$ and $G_{6;1}$:
\begin{equation*}
G_{3;2}+G_{6;1} = -2^{-1} \sum_{i \neq j} [(\T_i\T_j^*\varphi_{\ell_j}-\T_j^*\varphi_{\ell_j})+(\T_i\varphi_{r_j}-\varphi_{r_j})+(\T^*_j\varphi_{\ell_i}-\varphi_{\ell_i})+(\T_j^*\T_i\varphi_{r_i}-\T_i\varphi_{r_i})]S_iS_j^*.^{(\ddagger_3)}  
\end{equation*}
We are left to deal with $G_{6;2}$, $G_8$ and $G_{10}$:
\begin{align*}
G_{8}+G_{10}+G_{6;2}&= \sum_i (\T_i^*\varphi_{\ell_i}+\varphi_{r_i})S^*_iS^*_i +  (\varphi_{\ell_i}+\T_i\varphi_{r_i})S_iS_i - (\varphi_{\ell_i}+\T_i\varphi_{r_i}) -  (\T^*_i\varphi_{\ell_i}+\varphi_{r_i}) \\
&= \sum_i (\T_i^*\varphi_{\ell_i}+\varphi_{r_i})((S^*_i)^2-1) + (\varphi_{\ell_i}+\T_i\varphi_{r_i})((S_i)^2-1) \\
&= \sum_i [(\T_i^* \varphi_{\ell_i}-\varphi_{\ell_i}) + (\varphi_{r_i}-\T_i \varphi_{r_i})]((S_i^*)^2-1) + ( \varphi_{\ell_i}+ \T_i \varphi_{r_i})((S_i^*)^2+S_i^2-2) \\
&= G_{11} + G_{12}, \quad \text{where}
\end{align*}
\begin{equation*}
G_{11} := \sum_i[(\T_i^* \varphi_{\ell_i}-\varphi_{\ell_i}) + (\varphi_{r_i}-\T_i \varphi_{r_i})]((S_i^*)^2-1)^{(\ddagger_3)}   \ \text{and} \
G_{12} := - 2\sum_i (\cosh(\T_i F-F) -1)\Delta_i(4-\Delta_i).  
\end{equation*}
Let $W_{F;i} := \sqrt{\cosh(\T_i F-F)-1}$. Commuting $W_{F;i}$ with $\Delta_i$ gives
\begin{equation*}
W_{F;i} \Delta_i  = \Delta_i W_{F;i} + S_i \big( W_{F;i}-\T_i^*W_{F;i}\big) + S_i^* \big( W_{F;i}-\T_i W_{F;i}\big).
\end{equation*}
Thus
\begin{equation*}
W_{F;i}^2 \Delta_i(4-\Delta_i) = W_{F;i}\Delta_i(4-\Delta_i)W_{F;i} + R_{F;i}, \quad \text{where}
\end{equation*}
\begin{equation*}
\begin{aligned}
R_{F;i} &:= - W_{F;i}\Delta_i S_i \big(W_{F;i} -\T_i^*W_{F;i}\big) - W_{F;i}\Delta_i S_i^* \big(W_{F;i} -\T_i W_{F;i}\big) \\
& \quad + W_{F;i} S_i \big(W_{F;i} -\T_i^*W_{F;i}\big) (4-\Delta_i) + W_{F;i} S_i^* \big(W_{F;i} -\T_i W_{F;i}\big) (4-\Delta_i)^{\ddagger_5}.
\end{aligned}
\end{equation*}
A final accounting job gives the expression of $\mathcal{G}$:
\begin{equation}
\begin{aligned}
\label{GG}
\mathcal{G} &:= G_{r;1}+G_{\ell;1}+G_{1;1}+G_{2;1}+G_{3;1}+ G_{3;3}+ (G_{1;2}+G_{4;1}) \\
&\quad + G_7 + (G_{2;2}+G_{5;1})+G_9+(G_{3;2}+G_{6;1}) +G_{11} -2\sum_i R_{F;i},   
\end{aligned}
\end{equation}
or equivalently, $\mathcal{G} = G_r + G_{\ell} + 2\sum_i W_{F;i} \Delta_i(4-\Delta_i)W_{F;i}$.
\qed
\end{proof}

%- - -  - ----------------------------------------------------------------------------------
% ------------------------------------------------------------------------------------------
%  -----------------------------------------------------------------------------------------
%   -----------------------------
%    ---------------------------------------------------------------------------------------
%     --------------------------------------------------------------------------------------
%      -------------------------------------------------------------------------------------
\vspace{0.5cm}
\begin{center}
$\star \quad \star \quad \star$
\end{center}
\vspace{0.5cm}
We now turn to the proof of relation \eqref{theFirstoned1} that is key in Proposition \ref{propdodo}. Here $d=1$. For convenience we rewrite the relation we want to show. For $\phi \in \ell_0(\Z)$, $\phi_F := e^F \phi$ :  
\begin{equation}
\begin{aligned}
\label{theFirstone}
\big \langle \phi, [e^F A' e^F,\Delta] \phi \big \rangle &= \big \langle \phi_F, [A', \Delta] \phi_F \big \rangle - \big\|\sqrt{g_r-g_{\ell}}A' \phi_F \big\|^2 \\
& \quad - \big\|\sqrt{\Delta(4-\Delta)} W \phi_F \big\|^2 + 2^{-1} \big \langle \phi_F, (M + G) \phi_F \big \rangle, \quad \text{where} 
\end{aligned}
\end{equation}
\begin{equation}
\label{theOmegadd}
W = W_F := \sqrt{\cosh(\T F-F)-1},
\end{equation}
\begin{equation}
\begin{aligned}
\label{FunnyGuy}
M = M_F &:= 2^{-1}(S^*+S)(g_r-g_{\ell})(S^*+S) \\
& \quad + \big[ (\T^* \varphi_{\ell} - \varphi_{\ell}) + (\varphi_{\ell} - \T \varphi_{\ell}) + (\T\varphi_r - \varphi_r) + (\varphi_r - \T^*\varphi_r) \big], \quad \text{and}
\end{aligned}
\end{equation}
\begin{align}
\begin{split}
\label{goHunt}
G = G_F &:= A'S(\T^*\varphi_r -\varphi_r) + (\varphi_r - \T^*\varphi_r)S^*A' + A'S^*(\T\varphi_{\ell}-\varphi_{\ell}) + (\varphi_{\ell}-\T\varphi_{\ell})S A' \\
& \quad +\big[ (\T^*\varphi_{\ell} - {\T^*}^2 \varphi_{\ell}) + (\varphi_r - \T^*\varphi_r) \big] N{S^*}^2  +  \big[ (\T^2 \varphi_r-\T \varphi_r) + (\T\varphi_{\ell} - \varphi_{\ell}) \big] NS^2 \\
&\quad + \frac{1}{2}\big[ (\T^*\varphi_{\ell}-{\T^*}^2\varphi_{\ell}) + (\varphi_r - \T^* \varphi_r) \big] (S^*)^2  + \frac{1}{2}\big[ (\T \varphi_r - \T^2 \varphi_r) + (\varphi_{\ell}-\T \varphi_{\ell})\big] S^2 \\
&\quad + 2 \big[(\varphi_{\ell} - \T^* \varphi_{\ell}) + (\T \varphi_r - \varphi_r) \big] N + \big[(\T^* \varphi_{\ell}-\varphi_{\ell}) + (\varphi_r-\T \varphi_r)\big]((S^*)^2-1) \\
&\quad + 2 W_F\Delta S \big(W_F -W_{\T^*F}\big) + 2 W_F\Delta S^* \big(W_F -W_{\T F}\big) \\
&\quad - 2 W_F S \big(W_F -W_{\T^* F}\big) (4-\Delta) - 2 W_F S^* \big(W_F -W_{\T F}\big) (4-\Delta).
\end{split} 
\end{align}
\noindent \textit{Proof of} \eqref{theFirstone}. For the most part, the proof of this relation is the same as that of \eqref{theFirstoneDDD} when $d \geqslant 1$. However, the main difference is that here we do not introduce the function $g(n) := F'(\langle n \rangle) / \langle n \rangle$. We go over the proof done just above and point out the differences. As before we start with
\begin{equation*}
[e^F A' e^F, \Delta] = e^F[A', \Delta]e^F + e^FA'[e^F,\Delta] + [e^F,\Delta]A' e^F
\end{equation*}
and develop the last two terms of this relation. \\
\textbf{Part 1 : Creating $e^F A'g_r A' e^F$.}
\begin{equation*}
[e^F, \Delta] = g_r A' e^F - \frac{1}{2}g_r (S^*+S) e^F + \varphi_r \mathbf{1}_{\{n=0\}} (S^*-S) e^F + \varphi_r [e^F,(S^*-S)] + \big( \varphi_r + \varphi_{\ell} \big) e^F S. 
\end{equation*}
\begin{equation*}
[e^F, \Delta] = e^F A' g_r +\frac{1}{2}e^F(S^*+S) g_r + e^F(S^*-S)\varphi_r \mathbf{1}_{\{n=0\}} - [e^F,(S^*-S)]\varphi_r - S^*e^F\big( \varphi_r + \varphi_{\ell} \big). \end{equation*}
Therefore we have obtained
\begin{equation}
\label{Second1dd}
e^F A'[e^F,\Delta] + [e^F,\Delta]A' e^F =  2e^F A' g_r A' e^F + e^F (M_r + G_r + H_r)e^F, \quad \text{where}
\end{equation}
\begin{equation*}
M_r := -2^{-1}A' g_r (S^*+S) + 2^{-1}(S^*+S) g_r A', 
\end{equation*}
\begin{align*}
G_r &:= A' \varphi_r [e^F,(S^*-S)]e^{-F} + A' \big( \varphi_r + \varphi_{\ell} \big) e^F S e^{-F} \\
&- e^{-F} [e^F,(S^*-S)]\varphi_r A'  - e^{-F}S^*e^F\big( \varphi_r + \varphi_{\ell} \big) A', \quad \text{and}
\end{align*}
\begin{equation*}
H_r := A' \varphi_r \mathbf{1}_{\{n=0\}} (S^*-S) + (S^*-S)\varphi_r \mathbf{1}_{\{n=0\}} A'.
\end{equation*}
We calculate $M_r$:
\begin{align*}
M_r &= -\frac{1}{2}\left(-\frac{1}{2}\left(S^*+S\right)+(S^*-S)N\right)g_r(S^*+S)  + \frac{1}{2}\left(S^*+S\right)g_r\left(\frac{1}{2}\left(S^*+S\right)+ N(S^*-S)\right) \\
&= 2^{-1}(S^*+S)g_r(S^*+S) - 2^{-1}\left(S^*-S\right)\varphi_r\mathbf{1}_{\{n \neq 0\}}\left(S^*+S\right) +2^{-1}\left(S^*+S\right)\varphi_r\mathbf{1}_{\{n \neq 0\}}\left(S^*-S\right) \\
&= 2^{-1}(S^*+S)g_r(S^*+S) + \left(S\varphi_r S^* - S^*\varphi_rS\right)   + \left(S^*\varphi_r \mathbf{1}_{\{n=0\}} S - S\varphi_r \mathbf{1}_{\{n=0\}} S^*\right) \\
&= M_{r;1} + M_{r;2}, \quad \text{where}
\end{align*}
\begin{equation*}
M_{r;1} := 2^{-1}(S^*+S)g_r(S^*+S) + \big[ (\T\varphi_r - \varphi_r) + (\varphi_r - \T^*\varphi_r) \big] \ \  \text{and} \ \ M_{r;2} := \varphi_r(0)\left(\mathbf{1}_{\{n=-1\}}-\mathbf{1}_{\{n=1\}}\right).
\end{equation*}
\textbf{Part 2 : Creating $e^F A'g_{\ell} A' e^F$.} 
\begin{equation*}
[e^F, \Delta] = - g_{\ell} A' e^F + \frac{1}{2}g_{\ell} (S^*+S) e^F - \varphi_{\ell} \mathbf{1}_{\{n=0\}} (S^*-S) e^F - \varphi_{\ell} [e^F,(S^*-S)] + \big( \varphi_r+\varphi_{\ell} \big) e^F S^*. 
\end{equation*}
\begin{equation*}
[e^F, \Delta] = -e^F A' g_{\ell} -\frac{1}{2}e^F(S^*+S) g_{\ell} - e^F(S^*-S)\varphi_{\ell} \mathbf{1}_{\{n=0\}}  - [(S^*-S),e^F]\varphi_{\ell} - Se^F\big( \varphi_r + \varphi_{\ell} \big). 
\end{equation*}
Therefore we have obtained
\begin{equation}
\label{First1dd}
e^F A'[e^F,\Delta] + [e^F,\Delta]A' e^F =  -2e^F A' g_{\ell} A' e^F + e^F (M_{\ell} + G_{\ell} + H_{\ell})e^F,\quad \text{where}
\end{equation}
\begin{equation*}
M_{\ell} := 2^{-1} A' g_{\ell} (S^*+S) - 2^{-1}(S^*+S) g_{\ell} A',
\end{equation*}
\begin{align*}
G_{\ell} &:= -A' \varphi_{\ell} [e^F,(S^*-S)]e^{-F} + A' \big( \varphi_r + \varphi_{\ell}  \big) e^F S^*e^{-F} \\
&+ e^{-F} [e^F,(S^*-S)]\varphi_{\ell}A'  - e^{-F}Se^F\big( \varphi_r + \varphi_{\ell} \big) A', \quad \text{and}
\end{align*}
\begin{equation*}
H_{\ell} := -A'\varphi_{\ell} \mathbf{1}_{\{n=0\}} (S^*-S) -(S^*-S)\varphi_{\ell} \mathbf{1}_{\{n=0\}} A'.   
\end{equation*}
We calculate $M_{\ell}$:
\begin{equation*}
M_{\ell} = M_{\ell;1} + M_{\ell;2}, \quad \text{where}
\end{equation*}
\begin{equation*}
M_{\ell;1} := -2^{-1}(S^*+S)g_{\ell}(S^*+S) + \big[ (\T^*\varphi_{\ell} - \varphi_{\ell}) + (\varphi_{\ell} - \T\varphi_{\ell}) \big] \ \ \text{and} \ \ M_{\ell;2} := \varphi_{\ell}(0)\left(\mathbf{1}_{\{n=1\}}-\mathbf{1}_{\{n=-1\}}\right).    
\end{equation*}
\textbf{Part 3 : Adding the terms of Parts 1 and 2.} Take the average of \eqref{Second1dd} and \eqref{First1dd} to get :
\begin{equation*}
[e^F A' e^F,\Delta] = e^F[A',\Delta]e^F + e^F A' (g_r-g_{\ell})A' e^F + 2^{-1}e^F\left(M_r+M_{\ell}+G_r+G_{\ell} +H_{r} +H_{\ell} \right) e^F.
\end{equation*}
Applying $\phi \in \ell_0(\Z)$ to this equation and taking inner products will yield \eqref{theFirstone}. Let us elaborate exactly how this is achieved. First, let
\begin{equation*}
M := M_r + M_{\ell} = M_{r;1} + M_{\ell;1}.
\end{equation*}
The latter equality holds because $(M_{r;2}+M_{\ell;2})\phi = 0$. Second, note that $G_r$, $G_{\ell}$, $H_r$ and $H_{\ell}$ are exactly the same as in the preceding proof when $i=j=1$, which corresponds to $d=1$. These terms are handled in the same way. In particular $(H_r + H_{\ell})\phi = 0$. Finally, we investigate $G$. Referring to the preceding proof with $i=j=1$, let 
\begin{align*}
G &:= G_{r;1} + G_{\ell;1} + G_{1;1} + G_{2;1} + G_{3;3} + G_7 + G_9 + G_{11} -2R_{F;1}.
\end{align*}
Terms that do not contribute here are: $G_{3;1}$, $G_{1;2} + G_{4;1}$, $G_{2;2}+G_{5;1}$, $G_{3;2}+G_{6;1}$. We warn the careful reader that $G$ is not simply $G_r + G_{\ell}$, because somewhere hidden in $G_{r;2}+G_{\ell;2}$ is the term $-2 W \Delta(4-\Delta)W$ which needs to be extracted. After taking inner products, this term ultimately produces $-\|\sqrt{\Delta(4-\Delta)}W\phi_F\|^2$. Alternatively, $G = G_r + G_{\ell} + 2W\Delta(4-\Delta)W$.
\qed
\\
\noindent We also note that 
\begin{equation}
\begin{aligned}
\label{joker}
G_r + G_{\ell} &= G_{r;1} + G_{\ell;1} + G_{1;1} + G_{2;1} + G_{3;3} + G_7 + G_9 + G_{11} + G_{12} \\
&= 2\big[ (\T^*\varphi_{\ell} - {\T^*}^2 \varphi_{\ell}) + (\varphi_r - \T^*\varphi_r) \big] N{S^*}^2  -  2\big[ (\T \varphi_r -\T^2 \varphi_r) + (\varphi_{\ell} -\T \varphi_{\ell}) \big] NS^2\\
& \quad + 2 \big[ (\varphi_{\ell} -\T \varphi_{\ell}) + (\T^* \varphi_r - \varphi_r) + (\varphi_{\ell} - \T^* \varphi_{\ell}) + (\T \varphi_r - \varphi_r) \big] N \\
& \quad + \big[ (\T^*\varphi_{\ell}-{\T^*}^2\varphi_{\ell}) + 2(\varphi_r - \T^* \varphi_r) \big] {S^*}^2  + \big[ (\T \varphi_r - \T^2 \varphi_r) + 2(\varphi_{\ell}-\T \varphi_{\ell})\big] S^2 \\
& \quad + \big[ (\T \varphi_{\ell}-\varphi_{\ell}) + (\T^*\varphi_r-\varphi_r) \big] + \big[(\T^* \varphi_{\ell}-\varphi_{\ell}) + (\varphi_r-\T \varphi_r)\big]({S^*}^2-1) \\
& \quad - 2(\cosh(\T F-F) -1)\Delta(4-\Delta).
\end{aligned}
\end{equation}

%- - -  - ----------------------------------------------------------------------------------
% ------------------------------------------------------------------------------------------
%  -----------------------------------------------------------------------------------------
%   -----------------------------
%    ---------------------------------------------------------------------------------------
%     --------------------------------------------------------------------------------------
%      -------------------------------------------------------------------------------------

%%%%%%%%%%%%%%%%%%%%%%%%%%%%%%%%%%
%%%%%%%%%%%%%%%%%%%%%%%%%%%%%%%%%%
%%%%%%%%%%%%%%%%%%%%%%%%%%%%%%%%%%
%%%%%%%%%%%%%%%%%%%%%%%%%%%%%%%%%%%%%%%%%%%%%%%%%%%%%%%%%%%%%%%%%%%%%%%%%%%%%%%%%%%%%%%%%%%%%%%%%%%%%%
%%%%%%%%%%%%%%%%%%%%%%%%%%%%%%%%%%%%%%%%%%%%%%%%%%%%%%%%%%%%%%%%%%%%

%%%%%%%%%%%%%%%%%%%%%%%%%%%%%%%%%%
%%%%%%%%%%%%%%%%%%%%%%%%%%%%%%%%%%
%%%%%%%%%%%%%%%%%%%%%%%%%%%%%%%%%%
%%%%%%%%%%%%%%%%%%%%%%%%%%%%%%%%%%%%%%%%%%%%%%%%%%%%%%%%%%%%%%%%%%%%%%%%%%%%%%%%%%%%%%%%%%%%%%%%%%%%%%
%%%%%%%%%%%%%%%%%%%%%%%%%%%%%%%%%%%%%%%%%%%%%%%%%%%%%%%%%%%%%%%%%%%%

\end{document}